\font\ninerm=cmr9
\font\eightrm=cmr8
\font\sevenrm=cmr7
\def\real{\mathbb{R}}
\def\dH#1{\dot H^{#1}(\Omega)}
\def\uT{g}
\def\uTdel{g^\delta}
\def\uTtd{\tilde{g}^\delta}
\definecolor{ForestGreen}{rgb}{0.13,0.55,0.13}
\definecolor{OliveGreen}{rgb}{0.33,0.55,0}
\def\OliveGreen#1{{\color{OliveGreen}#1}}
\def\Blue#1{{\color{blue}#1}}
\def\Red#1{{\color{red}#1}}
\def\Black#1{{\color{black}#1}}
\newtheorem{theorem}{Theorem}[section]
\newtheorem{lemma}[theorem]{Lemma}
\theoremstyle{definition}
\newtheorem{remark}{Remark}
\newcommand{\Margin}[1]{}
\newcommand{\revision}[1]{#1}
\title[Regularization by fractional operators] %Use the shortened version of the full title
      {Regularization of a backwards parabolic equation by fractional operators}
\author[Barbara Kaltenbacher and William Rundell]{}
\subjclass{Primary: 35R30, 65M32; Secondary: 35R11.}
 \keywords{backwards diffusion, fractional differential equation, regularization, quasi-reversibility, Mittag Leffler function
}
 \email{barbara.kaltenbacher@aau.at}
 \email{rundell@math.tamu.edu}
\thanks{The work of the first author was supported by the Austrian Science Fund FWF
under the grants I2271 and P30054 as well as partially by the
Karl Popper Kolleg ``Modeling-Simulation-Optimization'',
funded by the Alpen-Adria-Universit\" at Klagenfurt and by the
Carin\-thian Economic Promotion Fund (KWF).\\
The work of the second author was supported 
in part by the
National Science Foundation through award DMS-1620138.}
\thanks{$^*$ corresponding author}
\begin{document}
\maketitle

% Enter the first author's name and address:
\centerline{\scshape Barbara Kaltenbacher$^*$}
\medskip
{\footnotesize
% please put the address of the first author
 \centerline{Department of Mathematics}
   \centerline{Alpen-Adria-Universit\"at Klagenfurt}
   \centerline{9020 Klagenfurt, Austria}
} % Do not forget to end the {\footnotesize by the sign }

\medskip

\centerline{\scshape William Rundell}
\medskip
{\footnotesize
 % please put the address of the second  and third author
 \centerline{ Department of Mathematics}
   \centerline{Texas A\&M University}
   \centerline{College Station, Texas 77843, USA}
}

\bigskip

% The name of the associate editor will be entered by an editorial staff
% "Communicated by the associate editor name" is not needed for special issue.
 \centerline{(Communicated by the associate editor name)}

%The abstract of your paper
\begin{abstract}
The backwards diffusion equation is one of the classical ill-posed inverse
problems, related to a wide range of applications,
and has been extensively studied over the last 50 years.
One of the first methods was that of {\it quasireversibility\/} whereby
the parabolic operator is replaced by a differential operator
for which the backwards problem
in time is well posed. This is in fact the direction we will take but will do
so with a nonlocal operator; an equation of fractional order in time
for which the backwards problem is known to be ``almost well posed.''

We shall look at various possible options and strategies but our
conclusion for the best of these will exploit the linearity of the problem
to break the inversion into distinct frequency bands and to use a
different fractional order for each.
The fractional exponents will be chosen using the discrepancy principle
under the assumption we have an estimate of the noise level in the data.
An analysis of the method is provided as are some illustrative numerical
examples.
\end{abstract}

%%%%%%%%%%%%%%%%%%%%%%%%%%%%%%%%%%%%%%%%%%%%%%%%%%%%
\section{Introduction}

The setting is in a bounded, simply connected domain $\Omega$ with
smooth ($C^2$) boundary $\partial\Omega$.
$\mathbb{L}$ is a uniformly elliptic second order partial differential
 operator defined in $\Omega$
with sufficiently smooth coefficients and subject to boundary values
on $\partial\Omega$ that, for simplicity, we take to be of homogeneous
Dirichlet type so that the domain of  $\mathbb{L}$
can be taken to be $H^2(\Omega)\cap H^1_0(\Omega)$.
There is, however, a completely parallel situation if the boundary conditions
are of impedance type.
%The initial data $u(x,0) = u_0(x)$ is unknown and has to be determined from a later time measurement at $t=T$ and taken over $\Omega$.

Thus we have
\begin{equation}\label{eqn:direct_prob}
\begin{aligned}
u_t - \mathbb{L} u &= 0,\quad (x,t)\in\Omega\times(0,T) \\
u(x,t) &= 0 \quad (x,t)\in\partial\Omega\times(0,T) \\
u(x,0) &= u_0 \quad x\in\Omega \\
\end{aligned}
\end{equation}
where $u_0$ is unknown and has to be determined from the final time value
\begin{equation}\label{eqn:final_data}
u(x,T) = \uT(x) \quad x\in\Omega 
\end{equation}
for some $T>0$ and a measured function $\uT(x)$ taken over the domain $\Omega$.

%\textcolor{ForestGreen}{
This problem is well-known, and easily shown, to be extremely ill-posed.
While often viewed as the ``backwards heat problem,'' it in fact arises
anytime a diffusion process has to be reversed and governs a wide variety
of applications.
Some of these would dictate an initial state not governed by a smooth
function $u_0$ but one with significant information residing in the
mid and high frequency bands.  The reversal from a final state might not
be through a strict time-process. An example here is the degradation
of an image by a blurring process; the backwards problem becomes one
of de-blurring.
%}
%\textcolor{OliveGreen}{
Indeed, the solution to \eqref{eqn:direct_prob} with \eqref{eqn:final_data}
can be represented in the form 
$g(x) = \int_\Omega K(x-y;T)u_0(y)\,dy$.
This is a Fredholm integral equation of the first kind for the 
\revision{initial} 
\Margin{Report C (1)}
state
$u_0$ and its inversion corresponds to deblurring from a perturbation of
a Gaussian kernel.
%}

%\textcolor{ForestGreen}{
Further recent application examples we wish to mention is identification of
airborne contaminants \cite{Akceliketal2005} and imaging with acoustic or elastic waves
in the presence of strong attenuation, which leads to a similar setting after
reformulation as a first order in time system
\cite{AlabauCannarsa2009,ChenTriggiani1989,Lasieckaetal2013}, arising, e.g., 
in  photoacoustic tomography \cite{KowSch12}.
%}

%\textcolor{ForestGreen}{
Given its physical importance, 
\eqref{eqn:direct_prob} with \eqref{eqn:final_data} has received considerable
attention over the last sixty years and in the next section
we review some of these approaches as they will have relevance to the main
results of this paper.
%}

%\textcolor{ForestGreen}{
The standard regularization technique to invert a compact operator
is to replace it by a ``nearby'' operator with a bounded inverse
and for the case of \eqref{eqn:direct_prob} this has been with either another
differential operator or what is in effect a truncated singular value
decomposition of the original.
Our approach will be rather different; we seek to replace
the parabolic equation
\eqref{eqn:direct_prob} with a fractional subdiffusion operator
whereby the time derivative $u_t$ becomes $D^\alpha_t u$ for some
$\alpha$, $0<\alpha<1$.
\begin{equation}\label{eqn:sub_diffusion}
D^\alpha_t - \mathbb{L} u = 0,\quad (x,t)\in\Omega\times(0,T).
\end{equation}
%}
%\textcolor{OliveGreen}{
The rationale behind this lies in the fact that the
parabolic equation arises from a diffusion model based on a Markov process
in which the current state of the system is determined from only
the previous state.
The model based on \eqref{eqn:sub_diffusion} is non-Markovian
and the value of the current state depends on all previous states;
indeed these have to be retained in the solution of \eqref{eqn:sub_diffusion}.
Thus in contrast to the parabolic differential operator we are now using a
non-local operator.
This fact allows for a more transparent reversal in time.
%}
%\textcolor{ForestGreen}{
At the solution level the exponential function inherent in
\eqref{eqn:direct_prob} is replaced by a Mittag Leffler function for the
subdiffusion operator and the decay of this for large argument is only linear.
Indeed, it has been shown, \cite{SakamotoYamamoto:2011},
that the backwards subdiffusion problem is only mildly ill-conditioned;
equivalent to a two derivative loss in space.
However, as we shall see there are several complexities involved and
the replacement as a regularizer cannot be done without some care.
%}

%\textcolor{ForestGreen}{
On the other hand, the subdiffusion equation \eqref{eqn:sub_diffusion}
is itself of considerable importance in applications.
If $D^\alpha_t$ is represented by a single fractional exponent $\alpha$
then backwards inversion can be accomplished in a  straightforward way.
However, if there are multiple exponents involved, that is
$D^\alpha_t = \sum_{i=1}^M q_i D^{\alpha_i}_t$ or, more generally if
$D^\alpha_t$ represents a fully distributed fractional derivative then
the regularization techniques discussed in this paper are exactly the pattern
that would have to be followed.
%}

\section{
%\textcolor{ForestGreen}{
Quasi-Reversibility and Random Walk models
%}
}

In this section we provide some background on why we view regularization
of the backwards parabolic equation by replacing it by one of subdiffusion type
is in a sense natural from a physical motivation standpoint.

Dating from the late 1960s
the initial attack on the inverse diffusion problem 
%\textcolor{ForestGreen}{
\eqref{eqn:direct_prob}, \eqref{eqn:final_data}
%}
was by the method of
quasi-reversibility whereby the parabolic operator was replaced by
a ``nearby'' differential operator for which the time reversal was well
posed and the approach was popularized in the book by Lattes and Lions,
\cite{LattesLions:1969}.
Some examples suggested were adding a term $\epsilon u_{tt}$ so that 
the equation 
\eqref{eqn:direct_prob} was of hyperbolic type or a  fourth order
operator term $\epsilon \mathbb{L}^2$ (thus converting the heat equation into
the beam equation with lower order terms).
The difficulty with both these perturbations is that the new operators require
either further initial or further boundary conditions that are not
transparently available.
It should also be noted that the idea of adding a small, artificial term
to a differential operator in order to improve the ill-conditioning
of a numerical scheme, such as adding {\it artificial viscosity\/}
to control the behaviour of shocks, is even older.

The quasi-reversibility approach by Showalter, \cite{Showalter:1974a,Showalter:1974b,Showalter:1976}
was to instead use the pseudoparabolic equation
\begin{equation}\label{eqn:pseudoparabolic}
(I - \epsilon \mathbb{L})u^{\epsilon}_t - \mathbb{L}u^{\epsilon} = 0,
\quad (x,t)\in\Omega\times(0,T) 
\end{equation}
which has a natural setting of 
$H^2(\Omega)\cap H^1_0(\Omega)$ and subject to the single initial
condition $u^\epsilon(x,0)=u_0(x)$.
There is an interesting history to this equation.
It occurs independently in numerous applications such as a
two-temperature theory of thermodynamics and flow in porous media
\cite{Barenblatt:1960,ColemanDuffinMizel:1965,ChenGurtin:1968}
and is known in the Russian literature as an equation
of Sobolev type.
Of course, in these applications the additional term $\epsilon \mathbb{L}\,u_t$
was part of the extended model and not added merely for a stabilizing effect.

The operator $B_\epsilon:= (I - \epsilon\mathbb{L})^{-1}\mathbb{L}\,$ is a
bounded operator on 
\revision{$H^2(\Omega)\cap H_0^1(\Omega)$} 
\Margin{Report C (2)} 
for $\epsilon>0$.
Thus the full group of operators $\exp(-tB_\epsilon)$ is easily defined
by the power series $\sum\frac{(-t)^n}{n!} (B_\epsilon)^n$ under conditions
on the resolvent $R(\lambda,\mathbb{L})$ which are satisfied by
any strongly elliptic operator.
Under such conditions,
$\exp(-tB_\epsilon)$ converges to $\exp(-t\mathbb{L})$
in the strong topology as $\epsilon\to 0$ and is the basis of Yosida's proof
of the Hille-Phillips-Yosida Theorem which 
shows the existence of semigroups of differential operators.
There are known error estimates on the rate of this convergence.
The quasi-reversibility step is to recover an approximation to
$u_0(x)$ by computing $u_0(x;\epsilon) = \exp(-T B_\epsilon)\uT(x)$.
One has to
select an $\epsilon>0$, depending on the expected noise level $\delta$
in $\uT$ and using $u_0(x;\epsilon)$ as the approximation to the initial $u_0$.
Thus replacing the heat equation by the pseudoparabolic equation
is a regularizing method for solving the backwards heat problem and
well-studied in the literature.

Of course, there were other approaches.
For example, a blending of the quasi-reversibility ideas with those of
logarithmic convexity led Showalter to suggest that retaining the heat equation
but introducing the {\it quasi-boundary value\/}
\begin{equation}\label{eqn:quasi-boundary}
\epsilon u(x,0) + u(x,T) = \uT(x) \quad x\in\Omega 
\end{equation}
in place of the final value \eqref{eqn:final_data},
gives superior reconstructions.
Several authors have followed this idea, for example,
\cite{ClarkOppenheimer:1994}.
A summary article on some of this earlier work can be found in
\cite{AmesClark_etal:1998} and a more comprehensive discussion in the book \cite{Isakov}.

Our approach in this paper will be 
\revision{to} 
\Margin{Report C (3)}
take a rather different regularizing
equation; one of fractional order in time giving rise to a subdiffusion model.
We give some background on this to show why it is feasible and,
in a sense to be defined, natural.

The starting point for fractional calculus is the Abel fractional integral
operator,
$ I_a^\alpha f(x) = \frac{1}{\Gamma(\alpha)}\int_a^x \frac{f(s)}{(x-s)^{1-\alpha}}\,ds$.
Then a fractional (time) derivative can be defined by either
${}^R_a D^\alpha_t f  = \frac{d\ }{dt} I_a^\alpha f$ or by
${}^C_a D^\alpha_t f  = I_a^\alpha \frac{df}{ds}$.
The former is the Riemann-Liouville derivative of order $\alpha$
and the latter is the Djrbashyan-Caputo derivative.
Note that these are nonlocal operators and have a definite starting point $a$.

While obviously related, there are clear and important differences.
The Riemann-Liouville version allows definition in a wider class of
function spaces and this is important from an analysis perspective.
One disadvantage from a differential equations viewpoint is
how initial conditions should be interpreted. For example, the
R-L derivative of a constant is nonzero; in fact it is unbounded at the
origin $a$.
The Djrbashyan-Caputo version has no such drawback and is thus more
frequently seen in applications involving initial or boundary data.
It is the one we will take in this paper.
Since we will use $t=0$ as the initial point throughout, we will
simply write $\partial_t^\alpha$ to denote ${}^C_0 D^\alpha_t$.

This fractional derivative was studied extensively by the Armenian
mathematician M~.~M.~Djrbashyan, in his 1966 book (in Russian);
an English translation of this appeared in 1993, \cite{Djrbashian:1993}.
However, there was a considerable amount of earlier work on
the topic but only available in the Russian literature.
The geophysicist Michele Caputo rediscovered this version of the
fractional derivative in (1967), \cite{Caputo:1967},
as a tool for understanding seismological 
\revision{phenomena}, 
\Margin{Report C (4)}
and later with Francesco Mainardi in viscoelasticity where the memory
effect of these derivatives was crucial,
\cite{CaputoMainardi:1971a}.

In addition to sharing the same initial/boundary conditions, the
fractional diffusion equation
\begin{equation}\label{eqn:subdiffusion}
\partial_t^\alpha u - \mathbb{L} u = 0,\quad (x,t)\in\Omega\times(0,T)
\end{equation}
has additional connections with the parabolic operator
$u_t -  \mathbb{L} u$ which will be useful for subsequent understanding
and which we now describe below. 

The heat equation can be viewed as the macroscopic limit of the basic
continuous time random walk ({\sc ctrw}) process where
after each time step $\delta t$ a random direction is chosen
and the walker moves in that direction a length $\delta x$.
If $\delta t$, $\delta x \to 0$ such that the ratio 
\revision{$K = (\delta x)^2/\delta t$} 
\Margin{Report C (5)}
is held constant, then it is easily seen that the heat equation
$u_t - K\triangle u = 0$ ensues. The value of $K$, the diffusion constant,
couples the space and time scales.
In the more general situation, one assumes that the temporal and
spatial increments
$\Delta t_n = t_n - t_{n-1}$ and $\Delta x_n = x_n-x_{n-1}$
are independent, identically distributed random variables,
following probability density functions $\psi(t)$ and $\lambda (x)$,
respectively, which are
the waiting time and jump length distributions, respectively.
Thus the probability of $\Delta t_n$ lying in an interval $[a,b]$ is
$\,P(a<\Delta t_n<b) = \int_a^b \psi(t)\,dt\,$  and the probability of
$\Delta x_n$ lying in an interval $[c,d]$ is
$\,P(c<\Delta x_n <d) = \int_c^d \lambda(x)\,dx$.

Different types of {\sc ctrw} processes can be categorized by the
{\it characteristic waiting time\/}
$\tau =: E[\Delta t_n] = \int_0^\infty t\psi(t)\,dt$
and the {\it jump length variance\/}
$\Sigma^2 =: E[(\Delta x_n)^2] = \int_{-\infty}^\infty x^2\lambda (x)\,dx$.
being finite or diverging.
If both are finite then it can be shown, 
\cite{MontrollWeiss:1965},
that the {\sc ctrw} framework recovers the classical diffusion
equation, as long as the waiting time {\sc pdf} $\psi(t)$ has a finite mean
and the jump length {\sc pdf} $\lambda(x)$ has finite first and second moments.
Thus this more general setting case reduces to the basic Gaussian
process described by the fundamental solution of the heat equation.
This is a realization of the {\it Central Limit Theorem.}

On the other hand,
if the mean waiting time $\tau$ is infinite which could occur, for example,
when the particle might be trapped in a certain potential well,
then we could, for example, have a waiting time {\sc pdf}
 with the asymptotic behavior
$\psi(t) \sim \frac{A}{t^{1+\alpha}}$ as $t\to\infty$,
for some $\alpha\in(0,1)$, and $A>0$.
The (asymptotic) power law decay is heavy tailed
and allows occasional very large waiting time between consecutive walks.
The closer $\alpha$ is to zero, the slower is the decay and more likely
a long waiting time will take place.

It turns out that this changes the dynamics of the stochastic process
completely. 
Assuming a fixed spatial step size $\Delta x$,
the combined {\sc pdf} $p(x,t)$ in the physical domain is now given by
\begin{equation}\label{eqn:frac_func_sol}
  p(x,t) = \frac{1}{2\sqrt{K_\alpha t^\alpha}}M_{\alpha/2}\left(\frac{|x|}{\sqrt{K_\alpha t^\alpha}}\right),
\end{equation}
\revision{where}
\Margin{Report C (6)}
$M_\alpha(z)$ is a particular version of the Wright function
to be described in the next section and the diffusion coefficient $K_\alpha$
is again a coupling between the spatial and temporal scales.
Taking $\alpha\to 1$ in \eqref{eqn:frac_func_sol} recovers the
fundamental solution of the heat equation.
It also can be shown that \eqref{eqn:frac_func_sol} is the fundamental solution
of the subdiffusion operator \eqref{eqn:subdiffusion}.
This ties in the fact that the fractional diffusion equation
\eqref{eqn:subdiffusion} results from a {\sc ctrw} with a temporal {\sc pdf}
given by the above asymptotic behaviour.
For classical (Brownian) motion the mean square deviation of the particle
from its starting point obeys the relation
$\langle x^2\rangle \propto t$ whereas the subdiffusion model gives
$\langle x^2\rangle \propto t^\alpha$.
For some direct applications involving the subdiffusion process see, 
for example, \cite{SokolovKlafterBlumen:2002}.

In the above analysis we could have assumed a finite waiting time
but dropped the assumption of finite variance on the {\sc pdf} $\lambda$.
This indeed leads to a fractional derivative in space but we shall not use this
approach.
There are alternative ways to define a space fractional derivative
that will better suit our regularization purpose and we 
will briefly introduce one standard approach in
Section~\ref{sec:space_frac_derivatives}
as it will have relevance to our analysis of regularization operators.

As well as the above tie in between the parabolic and subdiffusion equations
there are some fundamental differences.
The most important of these from our current perspective is the fact
that the decay of the solution of \eqref{eqn:direct_prob} is exponential in time
leading to severe ill-conditioning of the backwards problem.
On the other hand that of \eqref{eqn:subdiffusion} is only linear decay in time
and resulting in the backwards problem being only very mildly ill-conditioned.
There are many caveats and details that must be resolved but the basic
principle behind the regularization of the backwards parabolic
by the backwards fractional diffusion equation relies on this key fact.

The important point we wish to stress is the fact that considering
fractional order operators is natural in the sense that they also arise from
similar random walk processes just as in the parabolic case.
Their distinguishing features give rise to exactly the properties
that we desire in our regularizing equation:
the nonlocal fractional operator ``stores'' all previous time values
and this history record enables a feasible backwards in time reconstruction.

We should point out that in the discussion of random walks we needed to
correlate the space and time scales through a {\it diffusion constant\/} $K$.  
This is incorporated into the leading coefficients of the operator
$\mathbb{L}$, but if we had $\mathbb{L} = \triangle$ then an explicit
$K$ would have to be brought in through $K\triangle$.
The units of $K$ are distance$^2/$time and even for excellent conductors
such as metals this is typically quite small, of the order of $10^{-5}$.
Thus in our scaling of \ref{eqn:direct_prob} we should consider the presence
of the coupled values $KT$.  By scaling $K$ to unity we are in fact
scaling the values of the final time $T$.  Thus values of $T$ in the paper
of the order of $10^{-2}\,-\,10^{-3}$ actually represent fairly long
waiting times.

\smallskip

In the next section we shall provide some necessary background information
on the key special functions needed for the subdiffusion operator;
those of Mittag-Leffler and of Wright.
We will also look at fractional powers of elliptic operators.
Finally, some background information on regularization methods, in
particular the discrepancy principle for choosing regularization parameters, 
is provided.

Section~\ref{sec:regularization_strategies} will describe various
regularization strategies for the recovery of $u_0$ in
\eqref{eqn:direct_prob} subject to 
\eqref{eqn:final_data} that rely on fractional derivatives and Section~\ref{sec:conv} provides a convergence analysis for some of them.
\revision{Section~\ref{sec:reconstructions}} 
\Margin{Report C (7)}
will show some numerical reconstructions based on the
algorithms presented in Section~\ref{sec:regularization_strategies}.

\section{Fractional operators and regularization parameter choice}\label{sec:frac+reg}

Here we collect background material to be used in the following sections.
First we describe the main function of fractional calculus,
the Mittag-Leffler function, as well as the Wright function needed
for a description of the fundamental solution of the fractional
subdiffusion operator.
Second, we introduce some notation for fractional derivatives and collect
a few basic lemmas that will be needed for our analysis and to obtain a
representation theorem for the subdiffusion operator which will be the core
of our regularization methods.

\subsection{The Mittag-Leffler function $\revision{E_{\alpha,1}}(-x)$}\label{sec:mlf}

An essential component of fractional derivative formulations is
the two-parameter Mittag-Leffler function $E_{\alpha,\beta}(z)$ defined by
\begin{equation}\label{eqn:mlf}
  E_{\alpha,\beta}(z) = \sum_{k=0}^\infty \frac{z^k}{\Gamma(\alpha k+\beta)}\qquad  \alpha>0, \ \beta\in\mathbb{R},\quad z\in \mathbb{C},
\end{equation}
This generalizes the exponential function ubiquitous to classical diffusion;
$E_{1,1}(z) = e^z$.

The following lemmata will be needed, these can be found in many sources
including \cite{Djrbashian:1993,MR3244285,JinRundell:2015}.
\begin{lemma}\label{lem:ML_recurrence}
For $0< \alpha\leq 1$ and $x>0$, $\lambda>0$
\begin{equation}\label{eqn:ML_recur1}
\alpha\, \lambda \frac{d\ }{dx} E_{\alpha,1}(-\lambda x) = -E_{\alpha,\alpha}(-\lambda x)
\end{equation}
For 
\revision{$\alpha>0$ and $\beta>1$}
\Margin{Report C (9)}
and $\lambda$ real
\begin{equation}\label{eqn:ML_recur2}
\frac{d\ }{dx}\, x^{\beta-1} E_{\alpha,\beta}(\lambda x^\alpha)
= x^{\beta-2} E_{\alpha,\beta-1}(\lambda x^\alpha)
\end{equation}
For 
\revision{$\alpha>0$ and $\beta>0$} 
\Margin{Report C (9)}
and $a$ real
\begin{equation}\label{eqn:ML_recur3}
\frac{d\ }{d
\revision{x}
} E_{\alpha,\beta}(a 
\revision{x}
) =
\frac{a}{\alpha 
\revision{x}
}\bigl(
E_{\alpha,\beta-1}(a 
\revision{x}
) - (\beta-1)E_{\alpha,\beta}(a 
\revision{x}
) \bigr)
\end{equation}
\end{lemma}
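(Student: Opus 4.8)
The plan is to prove all three identities by a single mechanism: insert the defining series \eqref{eqn:mlf}, differentiate it term by term, and then recollapse the result into Mittag-Leffler form using only the Gamma functional equation $\Gamma(s+1)=s\,\Gamma(s)$ together with a shift of the summation index. Because $\alpha>0$, Stirling's formula forces the coefficients $1/\Gamma(\alpha k+\beta)$ to decay faster than any geometric sequence, so each $E_{\alpha,\beta}$ is entire; this legitimizes the term-by-term differentiation uniformly on compact sets, and I would dispose of this point once at the outset so that it need not be repeated. For \eqref{eqn:ML_recur2} the additional factor $x^{\beta-1}$ is smooth on $x>0$, and the hypotheses on $\beta$ keep all the shifted-index functions well defined.

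I would start with \eqref{eqn:ML_recur2}, the most transparent case. Absorbing $x^{\beta-1}$ into the sum gives $\sum_k \lambda^k x^{\alpha k+\beta-1}/\Gamma(\alpha k+\beta)$; differentiating pulls down the exponent $(\alpha k+\beta-1)$, which is exactly the factor cancelled by the relation $\Gamma(\alpha k+\beta)=(\alpha k+\beta-1)\,\Gamma(\alpha k+\beta-1)$, leaving $x^{\beta-2}E_{\alpha,\beta-1}(\lambda x^\alpha)$ with no index shift needed at all.

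The general formula \eqref{eqn:ML_recur3} runs along the same lines. Differentiating $E_{\alpha,\beta}(ax)$ term by term produces $\sum_{k\ge 1} k\,a^k x^{k-1}/\Gamma(\alpha k+\beta)$, and the heart of the matter is the elementary identity
\[
\frac{1}{\Gamma(\alpha k+\beta-1)}-\frac{\beta-1}{\Gamma(\alpha k+\beta)}=\frac{\alpha k}{\Gamma(\alpha k+\beta)},
\]
again a consequence of $\Gamma(s+1)=s\,\Gamma(s)$ with $s=\alpha k+\beta-1$. This shows that the bracketed combination on the right of \eqref{eqn:ML_recur3} equals $\sum_k \alpha k\,(ax)^k/\Gamma(\alpha k+\beta)$; applying the prefactor and reindexing then matches the differentiated series term by term.

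Finally I would obtain \eqref{eqn:ML_recur1} as the case $\beta=1$ of \eqref{eqn:ML_recur3}, in which the $(\beta-1)$ term disappears. The only genuine subtlety is the reduction of the surviving function $E_{\alpha,0}$: since $1/\Gamma$ vanishes at the non-positive integers, the $k=0$ term of $E_{\alpha,0}(z)$ drops, and reindexing yields $E_{\alpha,0}(z)=z\,E_{\alpha,\alpha}(z)$, after which the residual factor $1/x$ is absorbed cleanly. I expect the main obstacle to be nothing deep but rather the careful bookkeeping of these index shifts and, in particular, remembering that the poles of $\Gamma$ make the apparent lowest-order terms vanish; handled consistently, the three identities reduce to parallel coefficient comparisons.
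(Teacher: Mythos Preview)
Your approach is correct and is precisely the standard argument: termwise differentiation of the entire series \eqref{eqn:mlf} followed by the Gamma recursion $\Gamma(s+1)=s\,\Gamma(s)$ and an index shift, with the observation $1/\Gamma(0)=0$ handling the $E_{\alpha,0}$ reduction. The paper does not actually give its own proof of this lemma but simply cites the standard references \cite{Djrbashian:1993,MR3244285,JinRundell:2015}, where exactly this computation appears; so there is nothing to compare beyond noting that your plan coincides with the textbook derivation.
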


\begin{lemma}\label{lem:mlf-asymptotic}
Let $\alpha\in(0,1]$, $\beta\in\mathbb{R}$, 
\revision{$x\geq 0$}, 
\Margin{Report C (10)}
and $N\in\mathbb{N}$.
Then with 
\revision{$x\to\infty$}, 
\Margin{Report C (10)}
\begin{equation}\label{eqn:mlf-asymp}
E_{\alpha,\beta}(-
\revision{x}
) = \sum_{k=1}^N\frac{(-1)^{k-1}}{\Gamma(\beta-\alpha k)}
\frac{1}{
\revision{x}
^k} + O\Bigl(\frac{1}{
\revision{x}
^{N+1}}\Bigr).
\end{equation}
\end{lemma}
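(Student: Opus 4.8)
The plan is to avoid working with the defining series \eqref{eqn:mlf} directly---it converges for every $x$ but reveals nothing about the $x\to\infty$ behaviour---and instead to pass to the classical Hankel-contour representation of $E_{\alpha,\beta}$. First I would record the reciprocal-gamma identity
\[
\frac{1}{\Gamma(s)}=\frac{1}{2\pi i}\int_{\mathcal H}e^{\zeta}\zeta^{-s}\,d\zeta,
\]
where $\mathcal H$ is a Hankel contour running in from $-\infty$ below the branch cut, once around the origin, and back out to $-\infty$ above it. Substituting $s=\alpha k+\beta$ term by term in \eqref{eqn:mlf}, interchanging the (absolutely convergent) sum with the integral, and summing the resulting geometric series $\sum_k(z\zeta^{-\alpha})^k$ on a contour of radius exceeding $|z|^{1/\alpha}$ gives the representation
\[
E_{\alpha,\beta}(z)=\frac{1}{2\pi i}\int_{\mathcal H}\frac{e^{\zeta}\,\zeta^{\alpha-\beta}}{\zeta^{\alpha}-z}\,d\zeta .
\]

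With the representation in hand, the expansion comes from developing the Cauchy kernel in powers of $1/z$: using $\dfrac{1}{\zeta^{\alpha}-z}=-\sum_{j\ge 0}\zeta^{\alpha j}z^{-j-1}$, integrating term by term, and re-applying the reciprocal-gamma identity with exponent $\alpha k-\beta$ reproduces exactly the coefficients $1/\Gamma(\beta-\alpha k)$, so that $E_{\alpha,\beta}(z)\sim-\sum_{k\ge1}z^{-k}/\Gamma(\beta-\alpha k)$. Specialising to $z=-x$ and using $(-x)^{-k}=(-1)^{k}x^{-k}$ converts the leading minus sign together with the factor $(-1)^{k}$ into $(-1)^{k-1}$, which is precisely \eqref{eqn:mlf-asymp}.

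The main obstacle is the rigorous justification of this formal expansion, and it is entirely a matter of controlling the contour. Expanding the kernel in $1/z$ is legitimate only where $|\zeta|\ll|z|^{1/\alpha}$, so one must deform $\mathcal H$ to a contour with a small circular part near the origin and two rays bent to an angle $\phi$ with $\pi\alpha/2<\phi\le\pi$; on the near part the geometric expansion applies, while on the rays the factor $e^{\zeta}$ decays because $\mathrm{Re}\,\zeta\to-\infty$, and splitting the integral accordingly bounds the truncated remainder by $O(|z|^{-N-1})$. The delicate point is to check that this deformation does not cross the poles $\zeta^{\alpha}=z$ in a way that would contribute an exponentially large residue. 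For $z=-x$ (so $\arg z=\pi$) and $\alpha\in(0,1]$ those poles sit at argument $\pm\pi/\alpha$ with $\pi/\alpha\ge\pi$, i.e.\ in the closed left half-plane where $e^{\zeta}$ does not grow; equivalently, $\arg z=\pi$ lies in the purely algebraic sector $\pi\alpha/2<|\arg z|\le\pi$, so no exponential (Stokes) term appears and the expansion is genuinely algebraic. As a degenerate check, at $\alpha=\beta=1$ one has $E_{1,1}(-x)=e^{-x}$, and indeed every coefficient $1/\Gamma(1-k)$ with $k\ge1$ vanishes, so the algebraic series is identically zero and the remainder absorbs the exponentially small $e^{-x}$.
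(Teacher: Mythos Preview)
The paper does not prove this lemma; it is stated as a known fact with the preamble ``these can be found in many sources including \cite{Djrbashian:1993,MR3244285,JinRundell:2015}.'' Your Hankel-contour argument is precisely the classical derivation that appears in those references (in particular Djrbashyan and the Gorenflo--Kilbas--Mainardi--Rogosin monograph), so your approach is correct and coincides with the standard proof the paper is implicitly invoking.

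One minor sharpening: for $\alpha\in(0,1)$ and $z=-x$, the equation $\zeta^{\alpha}=z$ has no solution on the principal sheet $\arg\zeta\in(-\pi,\pi]$ at all (since $\alpha\arg\zeta\in(-\pi\alpha,\pi\alpha]$ cannot equal $\pi$), so the contour deformation meets no pole whatsoever rather than merely a pole where $e^{\zeta}$ is bounded; only at the endpoint $\alpha=1$ does a pole sit on the cut, and your $E_{1,1}(-x)=e^{-x}$ check handles that case.
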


\begin{lemma}\label{lem:mlf-asympt-bound}
For every $\alpha\in(0,1)$, the uniform estimate
\begin{equation*}
  \frac{1}{1+\Gamma(1-\alpha)x}\leq \revision{E_{\alpha,1}}(-x)\leq \frac{1}{1+\Gamma(1+\alpha)^{-1}x}
\end{equation*}
holds over $\mathbb{R}^+$, where the bounding constants are optimal.
\end{lemma}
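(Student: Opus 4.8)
The plan is to recast the two-sided estimate as a statement about a single auxiliary function and reduce everything to one monotonicity claim. Write $f(x)=E_{\alpha,1}(-x)$, $a=\Gamma(1-\alpha)$ and $b=\Gamma(1+\alpha)^{-1}$; since $f>0$ on $\mathbb{R}^+$, the asserted inequalities $\frac{1}{1+ax}\le f(x)\le\frac{1}{1+bx}$ are equivalent to $1+bx\le 1/f(x)\le 1+ax$, i.e. to
\[
  b\;\le\; h(x):=\frac{1}{x}\Bigl(\frac{1}{f(x)}-1\Bigr)\;\le\; a \qquad (x>0).
\]
I would prove the single statement that $h$ is nondecreasing on $(0,\infty)$ with $\lim_{x\to0^+}h(x)=b$ and $\lim_{x\to\infty}h(x)=a$. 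This yields both inequalities simultaneously, and because $h$ then attains values arbitrarily close to $b$ near the origin and to $a$ at infinity, it also establishes the optimality of both constants in one stroke.

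The two limits are immediate from the material already assembled. Near the origin the series \eqref{eqn:mlf} gives $f(x)=1-\frac{x}{\Gamma(1+\alpha)}+O(x^2)$, hence $1/f(x)=1+bx+O(x^2)$ and $h(0^+)=b$. At infinity, Lemma~\ref{lem:mlf-asymptotic} with $\beta=1$ and $N=1$ gives $f(x)=\frac{1}{\Gamma(1-\alpha)}\,x^{-1}+O(x^{-2})$, so $1/f(x)=\Gamma(1-\alpha)\,x\,(1+o(1))$ and $h(\infty)=a$. The endpoints are correctly ordered, $b<a$: by the reflection formula $\Gamma(1+\alpha)\Gamma(1-\alpha)=\pi\alpha/\sin(\pi\alpha)>1$ for $\alpha\in(0,1)$, whence $b=\Gamma(1+\alpha)^{-1}<\Gamma(1-\alpha)=a$.

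The remaining, and genuinely hard, step is the monotonicity of $h$. A convenient sufficient condition is convexity of $\psi:=1/f$, for then $(\psi(x)-\psi(0))/x=h(x)$ is automatically nondecreasing, and convexity of $\psi$ reduces to $2(f')^2\ge f f''$. Here one must be careful about the direction: $f=E_{\alpha,1}(-\cdot)$ is completely monotone (Pollard), hence log-convex, which only delivers the \emph{lower} bound $ff''\ge(f')^2$; the substance of the lemma is the opposite inequality $ff''\le 2(f')^2$. Writing $f(x)=\int_0^\infty e^{-xs}\rho_\alpha(s)\,ds$ with Pollard's nonnegative spectral density $\rho_\alpha$ and introducing the tilted probability measure $d\nu_x\propto e^{-xs}\rho_\alpha(s)\,ds$, this becomes the moment inequality $\int s^2\,d\nu_x\le 2\bigl(\int s\,d\nu_x\bigr)^2$. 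Since such an inequality fails for generic measures, it is precisely here that the explicit form of $\rho_\alpha$ and the restriction $\alpha\in(0,1)$ must enter, and I expect this moment estimate to be the main obstacle. As a consistency check, evaluating at $x=0$ reduces the inequality to $\Gamma(1+\alpha)^2\le\Gamma(1+2\alpha)$, which is just the log-convexity of $\Gamma$, so the claim is at least correct at the left endpoint.

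If convexity of $\psi$ turns out to be too strong to hold globally, the fallback is to differentiate $h$ directly and show $h'\ge0$. Using the recurrence \eqref{eqn:ML_recur1}, $\alpha f'(x)=-E_{\alpha,\alpha}(-x)$, one can express $h'$ through $E_{\alpha,1}(-x)$ and $E_{\alpha,\alpha}(-x)$ and reduce the sign condition to a comparison between these two Mittag-Leffler functions, the same spectral representation (now for both $E_{\alpha,1}$ and $E_{\alpha,\alpha}$) being the tool to close it. Either way the analytic heart of the proof is a single sharp inequality relating $f$ to its first two derivatives, with the sharp constants $a$ and $b$ emerging only in the boundary limits computed above.
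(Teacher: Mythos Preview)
The paper does not prove this lemma; it is quoted without proof, with the blanket attribution ``these can be found in many sources including \cite{Djrbashian:1993,MR3244285,JinRundell:2015}'' covering Lemmas~\ref{lem:ML_recurrence}--\ref{lem:mlf-asympt-bound}. So there is no in-paper argument to compare your proposal against.

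On its own merits, your reduction to monotonicity of $h(x)=x^{-1}\bigl(1/f(x)-1\bigr)$ is clean, and the two endpoint limits together with the reflection-formula check $b<a$ are correct and do deliver optimality once monotonicity is in hand. But the proposal is, by your own admission, incomplete at the single load-bearing step: you do not prove either the convexity of $1/f$ or the equivalent moment inequality $\int s^{2}\,d\nu_x\le 2\bigl(\int s\,d\nu_x\bigr)^{2}$ for the tilted Pollard measure, and you correctly observe that this fails for generic completely monotone $f$. The fallback route via a comparison of $E_{\alpha,1}$ and $E_{\alpha,\alpha}$ is likewise only a program. So what you have written is a well-structured outline in which the actual analytic content---the one inequality that distinguishes $E_{\alpha,1}(-x)$ from an arbitrary completely monotone function---is still missing.

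For orientation: the sharp two-sided bound in this form is usually attributed to T.~Simon (\emph{Comparing Fr\'echet and positive stable laws}, Electron.\ J.\ Probab.\ 19 (2014), Theorem~4), and the published argument does go through the Pollard/stable-law representation, exploiting specific moment identities for the one-sided $\alpha$-stable density rather than a bare Cauchy--Schwarz or log-convexity step. Your moment formulation is pointed at the right target; closing it will require that structural input about $\rho_\alpha$, which is precisely what your proposal has not yet supplied.
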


From Lemma~\ref{lem:mlf-asympt-bound} 
we obtain the stability estimate
\begin{lemma}\label{lem:mlf-stability_est}
\begin{equation}\label{eqn:stability_est}
\frac{1}{\lambda \, E_{\alpha,1}(-\lambda T^\alpha)}\leq 
\frac{1}{\lambda}+\Gamma(1-\alpha) T^\alpha\,
\leq \bar{C}\frac{1}{1-\alpha};
\end{equation}
for $\bar{C}=\frac{1}{4\lambda_1}+\frac{\sqrt{2}\pi}{4} \max\{T^{\frac34},T\}$ and all $\lambda\geq \lambda_1$, $\alpha\in[\frac34,1)$.
\end{lemma}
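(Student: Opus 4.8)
The plan is to establish the two inequalities in \eqref{eqn:stability_est} separately: the first is an immediate consequence of Lemma~\ref{lem:mlf-asympt-bound}, while the second is a termwise estimate whose only nontrivial ingredient is a sharp bound for $\Gamma(1-\alpha)$.

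For the first inequality I would substitute $x=\lambda T^\alpha$ into the lower bound of Lemma~\ref{lem:mlf-asympt-bound}, which gives $E_{\alpha,1}(-\lambda T^\alpha)\geq \bigl(1+\Gamma(1-\alpha)\lambda T^\alpha\bigr)^{-1}$. Taking reciprocals yields $1/E_{\alpha,1}(-\lambda T^\alpha)\leq 1+\Gamma(1-\alpha)\lambda T^\alpha$, and dividing by $\lambda$ produces $\frac{1}{\lambda\,E_{\alpha,1}(-\lambda T^\alpha)}\leq \frac{1}{\lambda}+\Gamma(1-\alpha)T^\alpha$ at once.

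For the second inequality I would bound the two summands against $\frac{1}{1-\alpha}$ separately. The restriction $\alpha\in[\frac34,1)$ forces $1-\alpha\leq\frac14$, hence $\frac{1}{1-\alpha}\geq 4$, so $\frac{1}{\lambda}\leq\frac{1}{\lambda_1}\leq\frac{1}{4\lambda_1}\frac{1}{1-\alpha}$. The crux is the Gamma factor. Using the reflection formula $\Gamma(1-\alpha)\Gamma(\alpha)=\pi/\sin(\pi\alpha)$ together with $\sin(\pi\alpha)=\sin(\pi(1-\alpha))$, and observing that $\Gamma$ is decreasing on $(0,\approx 1.46)$ so that $\Gamma(\alpha)\geq\Gamma(1)=1$ on $[\frac34,1)$, I would write $\Gamma(1-\alpha)\leq \frac{\pi}{\sin(\pi(1-\alpha))}$. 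Since $\pi(1-\alpha)\in[0,\frac{\pi}{4}]$ and $\sin$ is concave there, the chord inequality $\sin\theta\geq\frac{\sin(\pi/4)}{\pi/4}\theta=\frac{2\sqrt2}{\pi}\theta$ gives $\sin(\pi(1-\alpha))\geq 2\sqrt2\,(1-\alpha)$, whence $\Gamma(1-\alpha)\leq\frac{\sqrt2\,\pi}{4}\frac{1}{1-\alpha}$. This is precisely where the constant $\frac{\sqrt2\,\pi}{4}$ appearing in $\bar C$ comes from, and I expect this reflection-formula-plus-concavity estimate to be the one genuinely nontrivial step; the remainder is monotonicity bookkeeping.

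Finally I would absorb the $T$ dependence by noting that for $\alpha\in[\frac34,1)$ one has $T^\alpha\leq\max\{T^{3/4},T\}$ — the first factor dominates when $T\leq 1$ and the second when $T\geq 1$ — so that $\Gamma(1-\alpha)T^\alpha\leq\frac{\sqrt2\,\pi}{4}\max\{T^{3/4},T\}\frac{1}{1-\alpha}$. Adding this to the bound $\frac{1}{\lambda}\leq\frac{1}{4\lambda_1}\frac{1}{1-\alpha}$ and factoring out $\frac{1}{1-\alpha}$ gives exactly $\bar C\frac{1}{1-\alpha}$ with $\bar C=\frac{1}{4\lambda_1}+\frac{\sqrt2\,\pi}{4}\max\{T^{3/4},T\}$, completing the estimate.
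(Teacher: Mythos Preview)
Your proof is correct and follows essentially the same route as the paper: the first inequality via the lower bound of Lemma~\ref{lem:mlf-asympt-bound}, and the second via the reflection formula $\Gamma(\alpha)\Gamma(1-\alpha)=\pi/\sin(\pi\alpha)$ together with $\Gamma(\alpha)\geq\Gamma(1)=1$ and the monotonicity of $\theta\mapsto\theta/\sin\theta$ on $(0,\pi/4]$ (equivalently your chord/concavity bound), yielding $\Gamma(1-\alpha)\leq\frac{\sqrt{2}\pi}{4}\frac{1}{1-\alpha}$. You additionally spell out the handling of the $\frac{1}{\lambda}$ and $T^\alpha$ terms, which the paper leaves implicit, but the argument is the same.
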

\revision{ 
\begin{proof}
The first inequality in \eqref{eqn:stability_est} is an immediate consequence of the lower bound in Lemma~\ref{lem:mlf-asympt-bound}. The second inequality can be obtained by using the reflection formula
$\,\Gamma(z)\Gamma(1-z)=\frac{\pi}{\sin(\pi z)}$, which allows to estimate 
\[
\Gamma(1-\alpha)
=\frac{1}{\Gamma(\alpha)}\frac{\pi(1-\alpha)}{\sin(\pi (1-\alpha))}\,\frac{1}{1-\alpha}
\leq \frac{1}{\Gamma(1)}\frac{\tfrac{\pi}{4}}{\sin(\tfrac{\pi}{4})}\,\frac{1}{1-\alpha}
\]
for $\alpha\in[\frac34,1)$.
\end{proof}
}
\Margin{Report C (11)}

This lemma when taken together with filtering of the data 
with some function $f_\gamma$ so that $\lambda f_\gamma(\lambda)\leq C_\gamma$ 
%(and $f_\gamma(\lambda)\to 1$ as $\gamma\to0$)
implies a bound on the noise propagation in time fractional reconstruction
of $u_0$.
The fact that the noise amplification grows only linearly with
$\frac{1}{1-\alpha}$ as $\alpha\to1$ is one of the key facts that renders
fractional backwards diffusion an attractive regularizing method.

Convergence of $E_{\alpha,1}(-\lambda T^\alpha)$ to $\exp(-\lambda T)$ as
$\alpha\to1$ is clear,
but to prove convergence of the backwards subdiffusion regularization $u_0^\delta(\cdot,\alpha)$ to $u_0$ we require rate estimates in terms of $1-\alpha$
\begin{lemma}\label{lem:rateE}
For any $\alpha_0\in(0,1)$ and $p\in[1,\frac{1}{1-\alpha_0})$, there exists $C=C(\alpha_0,p)>0$ such that for all $\lambda\geq \lambda_1$, $\alpha\in[\alpha_0,1)$
\begin{equation}\label{eqn:rateE}
\left|E_{\alpha,1}(-\lambda T^\alpha)-\exp(-\lambda T)\right|\leq C\lambda^{1/p}(1-\alpha)\,.
\end{equation}
\end{lemma}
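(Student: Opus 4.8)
The plan is to reduce everything to a comparison at a common argument and then to extract the factor $1-\alpha$ by differentiating the Mittag-Leffler function with respect to its order. First I would split
\[
E_{\alpha,1}(-\lambda T^\alpha)-\exp(-\lambda T)=\bigl[E_{\alpha,1}(-\lambda T^\alpha)-\exp(-\lambda T^\alpha)\bigr]+\bigl[\exp(-\lambda T^\alpha)-\exp(-\lambda T)\bigr].
\]
The second bracket is elementary: by the mean value theorem it is bounded by $\lambda\,|T^\alpha-T|\,e^{-\lambda\min\{T,T^\alpha\}}$, and since $|T^\alpha-T|=T\,|T^{-(1-\alpha)}-1|\le C(1-\alpha)$ while $\min\{T,T^\alpha\}\ge c_0>0$ uniformly in $\alpha\in[\alpha_0,1)$ makes $\lambda e^{-\lambda\min\{T,T^\alpha\}}$ bounded, this term is $\le C(1-\alpha)\le C(1-\alpha)\lambda^{1/p}$ (using $\lambda\ge\lambda_1$). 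Writing $x:=\lambda T^\alpha$ and noting $x^{1/p}\le C\lambda^{1/p}$ because $T^\alpha$ is bounded, it remains to prove the equal-argument estimate $|E_{\alpha,1}(-x)-e^{-x}|\le C(1-\alpha)\,x^{1/p}$ for $x\ge c_0$.

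For the smallness factor I would not try to use the two-sided bounds of Lemma~\ref{lem:mlf-asympt-bound} directly: as $\alpha\to1$ those bounds degenerate to $\tfrac{1}{1+x}$ and leave an $O(1)$ gap to $e^{-x}$, so they cannot by themselves produce the rate $1-\alpha$. Instead I would exploit $E_{1,1}=\exp$ and write the difference as an integral of the order-derivative,
\[
E_{\alpha,1}(-x)-e^{-x}=-\int_\alpha^1\frac{\partial}{\partial s}E_{s,1}(-x)\,ds,\qquad\text{so}\qquad |E_{\alpha,1}(-x)-e^{-x}|\le (1-\alpha)\sup_{s\in[\alpha_0,1]}\Bigl|\tfrac{\partial}{\partial s}E_{s,1}(-x)\Bigr|.
\]
This isolates the factor $1-\alpha$ cleanly and reduces the lemma to a uniform (in $s$) bound on $\partial_s E_{s,1}(-x)$ of the form $\le C\,x^{1/p}$ for $x\ge c_0$.

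To estimate $\partial_s E_{s,1}(-x)=-\sum_{k\ge1}\frac{(-x)^k k\,\psi(sk+1)}{\Gamma(sk+1)}$ I would treat two ranges. For $x\le 1$ the series is dominated by its first terms and is $O(x)$, which is $\le x^{1/p}$ since $x\le x^{1/p}$ there. For $x\ge1$ I would split the sum at $k\approx x$, using $\psi(sk+1)=O(\log(1+k))$ on the head and, for the algebraic tail, the uniform decay $E_{s,1}(-x)\le 1/x$ coming from the upper bound in Lemma~\ref{lem:mlf-asympt-bound} together with the recurrences of Lemma~\ref{lem:ML_recurrence}; this yields a bound of the shape $|\partial_s E_{s,1}(-x)|\le C\,x^{1-s}\log(2+x)$, uniform for $s\in[\alpha_0,1]$. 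Since $s\ge\alpha_0$ gives $x^{1-s}\le x^{1-\alpha_0}$ for $x\ge1$, the exponent is fixed and strictly below $1/p$ exactly under the hypothesis $p<\tfrac{1}{1-\alpha_0}$; the strict inequality provides the room needed to absorb the logarithm, so that $x^{1-s}\log(2+x)\le C\,x^{1/p}$. Reassembling the two brackets then gives \eqref{eqn:rateE}.

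I expect the main obstacle to be precisely this uniform-in-$s$ control of $\partial_s E_{s,1}(-x)$ through the transition region $x\sim\log\tfrac{1}{1-s}$, where $E_{s,1}(-x)$ crosses over from exponential to algebraic behaviour and where the finite-term asymptotic expansion of Lemma~\ref{lem:mlf-asymptotic} is not uniform as $s\to1$. Handling this region forces one to combine the convergent series (good for moderate $x$) with the uniform bounds of Lemma~\ref{lem:mlf-asympt-bound} (good for large $x$), and it is the matching of these two regimes, converting the order-dependent growth $x^{1-s}$ into the single exponent $x^{1/p}$, that makes the restriction $p<\tfrac{1}{1-\alpha_0}$ natural and essentially unavoidable along this route.
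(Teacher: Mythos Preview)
Your reduction to the uniform estimate $\sup_{s\in[\alpha_0,1]}|\partial_s E_{s,1}(-x)|\le C\,x^{1/p}$ is clean, and the splitting off of $\exp(-\lambda T^\alpha)-\exp(-\lambda T)$ is handled correctly. The gap is in the proposed proof of that uniform estimate. The power series $\partial_s E_{s,1}(-x)=-\sum_{k\ge1}\frac{(-x)^k k\,\psi(sk+1)}{\Gamma(sk+1)}$ is alternating with individual terms of magnitude $\sim\exp(x^{1/s})$ near the dominant index $k\sim x^{1/s}/s$; its smallness is entirely due to cancellation. Splitting the sum at $k\approx x$ and bounding the head termwise via $\psi(sk+1)=O(\log k)$ therefore yields an exponentially large bound, not $x^{1-s}\log(2+x)$. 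The recurrences of Lemma~\ref{lem:ML_recurrence} and the two-sided bound of Lemma~\ref{lem:mlf-asympt-bound} control $E_{s,1}$ and its argument-derivatives, not $\partial_s E_{s,1}$, and the tail you wish to dominate is not itself a Mittag--Leffler series, so those lemmas do not transfer as stated. To salvage this route you would need an integral (e.g.\ Hankel-contour) representation of $E_{s,1}(-x)$ that can be differentiated in $s$ under the integral sign with a uniformly controllable kernel --- essentially a separate piece of analysis that you have not supplied.

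The paper sidesteps this difficulty by a different mechanism: it views $v(t):=E_{\alpha,1}(-\lambda t^\alpha)-e^{-\lambda t}$ as the solution of $v'+\lambda v=w$ with forcing $w:=-(\partial_t^\alpha-\partial_t)E_{\alpha,1}(-\lambda t^\alpha)$, obtains $|v(T)|^p\le\lambda^{1-p}\|w\|_{L^p(0,T)}^p$ by an $L^p$ energy inequality, and then proves $\|w\|_{L^p(0,T)}\le C\lambda(1-\alpha)$ on the Laplace side. There $(\mathcal{L}w)(\xi)=\lambda(\xi^{\alpha-1}-1)/(\lambda+\xi^\alpha)$ is factored and inverted into a convolution of explicit power-law kernels, and the $(1-\alpha)$ factor is extracted by the mean value theorem applied not to $E_{s,1}$ but to the elementary function $\alpha\mapsto t^{\rho-\alpha}/\Gamma(1-\alpha+\rho)$, whose $\alpha$-derivative is trivially bounded in $L^r$. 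The constraint $p<\tfrac{1}{1-\alpha_0}$ then appears as the $L^p$ integrability threshold for these kernels, in close analogy with the way it arises in your approach.
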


\begin{proof}
To prove \eqref{eqn:rateE}, we employ an energy estimate for the ODE satisfied by $v(t):=E_{\alpha,1}(-\lambda t^\alpha)-\exp(-\lambda t)=u_{\alpha,\lambda}(t)-u_{1,\lambda}(t)$,
see Lemma~\ref{lem:frac_relaxation} below,
\[
\partial_t v+\lambda v=-(\partial_t^\alpha-\partial_t)u_{\alpha,\lambda}=:w\,.
\]
Multiplying with $|v(\tau)|^{p-1}\mbox{sign}(v(\tau))$, integrating from $0$ to $t$, and applying Young's inequality yields
\[
\begin{aligned}
\frac{1}{p}|v(t)|^p +\lambda\int_0^t |v(\tau)|^p\, d\tau 
&= \int_0^t w(\tau) |v(\tau)|^{p-1}\mbox{sign}(v(\tau))\, d\tau \\
&\leq \frac{1}{p\lambda^{p-1}}\int_0^t |w(\tau)|^p\, d\tau+\frac{(p-1)\lambda}{p}\int_0^t |v(\tau)|^p\, d\tau
\end{aligned}
\]
i.e., after multiplication with $p$,
\begin{equation}\label{vestp}
|v(t)|^p +\lambda\int_0^t |v(\tau)|^p\, d\tau \leq \frac{1}{\lambda^{p-1}}\int_0^t |w(\tau)|^p\, d\tau\,.
\end{equation}
We proceed by deriving 
\revision{an} 
\Margin{Report C (12)}
estimate of the the $L^p$ norm of $w$ of the form
\[
\left(\int_0^t |w(\tau)|^p\, d\tau\right)^{1/p}\leq C\, \lambda\, (1-\alpha) 
\]
with $C>0$ independent of $\alpha$ and $\lambda$.

We do so using its Laplace transform, and the fact that
\[
w=-(\partial_t^\alpha-\partial_t)u_{\alpha,\lambda}=\partial_tE_{\alpha,1}(-\lambda t^\alpha)-h_\alpha*\Bigl(\partial_tE_{\alpha,1}(-\lambda t^\alpha)\Bigr)
\]
where 
\[
h_\alpha(t)=\frac{1}{\Gamma(1-\alpha)} t^{-\alpha} \mbox{ with } (\mathcal{L}h)(\xi)=:H(\xi)=\xi^{\alpha-1}\,,
\]
due to $\Bigl(\mathcal{L}(t^p)\Bigr)(\xi)=\Gamma(1+p)\xi^{-(1+p)}$ for $p>-1$.
Using the identity 
\[
\Bigl(\mathcal{L}\Bigl(\partial_tE_{\alpha,1}(-\lambda t^\alpha)\Bigr)\Bigr)(\xi)=-\frac{\lambda}{\lambda+\xi^\alpha}
\]
(that follows from Lemma~\ref{lem:frac_relaxation} below)
together with the Convolution Theorem, we have, for any $\rho>0$ (fixed, independently of $\alpha\in[\alpha_0,1)$, e.g., $\alpha_0=\frac34$, $\rho=\frac18$),
\[
\Bigl(\mathcal{L}w\Bigr)(\xi)=\lambda \frac{\xi^{\alpha-1}-1}{\lambda+\xi^\alpha}
=\lambda\, A(\xi;\alpha)\, B(\xi,\alpha)\,.
\] 
Here 
\[
\begin{aligned}
&A(\xi;\alpha)=\frac{\xi^{\rho}}{\lambda+\xi^\alpha} =\mathcal{L}(a(t;\alpha))\,, \\
&B(\xi;\alpha)=\xi^{\alpha-1-\rho}-\xi^{-\rho} =\mathcal{L}(b(t;\alpha))=\mathcal{L}(\varphi(t;\alpha)-\varphi(t;1))\,,
\end{aligned}
\]
with 
\[
\begin{aligned}
\|a(\cdot;\alpha)\|_{L^q(0,T)}
\leq C \|A(\cdot;\alpha)\|_{L^{q^*}(\mathbb{R})}
\leq C_1<\infty 
\end{aligned}
\]
provided
\begin{equation}\label{q}
q^*> \frac{1}{\alpha-\rho} \mbox{ and }
\varphi(t,\alpha)=\frac{1}{\Gamma(1-\alpha+\rho)} t^{\rho-\alpha}\,.
\end{equation}
Hence, by the Mean Value Theorem and some $\tilde{\alpha}\in[\alpha,1]$ (note that $\tilde{\alpha}$ depends on $t$)
\[
\begin{aligned}
\varphi(t,\alpha)-\varphi(t,1)&=
\frac{d\varphi}{d\alpha} (t,\tilde{\alpha})\, (\alpha-1)\\
&=\frac{1}{\Gamma(1-\tilde{\alpha}+\rho)} t^{\rho-\tilde{\alpha}} \Bigl(-\frac{\Gamma'(1-\tilde{\alpha}+\rho)}{\Gamma(1-\tilde{\alpha}+\rho)} +\log(t)\Bigr)\, (1-\alpha)\,,
\end{aligned}
\]
so that 
\begin{equation}\label{r}
\|b(\cdot,1)\|_{L^r(0,T)}
=\|\varphi(\cdot,\alpha)-\varphi(\cdot,1)\|_{L^r(0,T)}\,
\leq C_2 (1-\alpha)
\mbox{ provided }
r<\frac{1}{1-\rho}\,.
\end{equation}
Altogether we have, %(cf. \cite[Section 2.6]{thesisThomasPotts}) \todoBK{find better reference}
\[
\begin{aligned}
\|w\|_{L^p(0,T)}&=
\lambda\|a(\cdot,\alpha)*b(\cdot;\alpha)\|_{L^p(0,T)}\\
&\leq
\lambda\|a(\cdot,\alpha)\|_{L^q(0,T)}
\|b(\cdot;\alpha)\|_{L^r(0,T)}\leq C_1 C_2 \lambda(1-\alpha)\,.
\end{aligned}\]
provided
$\frac{1}{q}+\frac{1}{r}\leq 1+\frac{1}{p}$.

Together with \eqref{q} and \eqref{r} and uniformity with respect to $\alpha\in [\alpha_0,1)$ this leads to the condition
$p<\frac{1}{1-\alpha_0}$.
\end{proof}

The above lemma together with the stability estimate \eqref{eqn:stability_est} yields the following bound which will be crucial for our convergence analysis in Section~\ref{sec:conv}.
\begin{lemma} \label{lem:boundH2}
For any $\alpha_0\in(0,1)$ and $p\in[1,\frac{1}{1-\alpha_0})$, there exists $\tilde{C}=\tilde{C}(\alpha_0,p)>0$ such that for all $\lambda\geq \lambda_1$, $\alpha\in[\alpha_0,1)$
\begin{equation}\label{eqn:boundH2}
\left|\frac{\exp(-\lambda T)}{E_{\alpha,1}(-\lambda T^\alpha)} - 1\right|\leq \tilde{C} \lambda^{1+1/p}\,.
\end{equation}
\end{lemma}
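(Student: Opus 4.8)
The plan is to write the left-hand side as a single fraction and estimate its numerator and denominator separately, exploiting the positivity of the Mittag--Leffler function. Since $E_{\alpha,1}(-\lambda T^\alpha)>0$ by Lemma~\ref{lem:mlf-asympt-bound}, I would start from
\[
\left|\frac{\exp(-\lambda T)}{E_{\alpha,1}(-\lambda T^\alpha)}-1\right|
=\frac{\left|E_{\alpha,1}(-\lambda T^\alpha)-\exp(-\lambda T)\right|}{E_{\alpha,1}(-\lambda T^\alpha)}\,,
\]
which reduces the problem to controlling the rate of convergence in the numerator against the (potentially large) reciprocal Mittag--Leffler factor in the denominator.

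For the numerator I would invoke Lemma~\ref{lem:rateE}, which for the given $\alpha_0$ and $p$ bounds it by $C\,\lambda^{1/p}(1-\alpha)$. For the reciprocal of the denominator I would use the stability estimate \eqref{eqn:stability_est}, rewritten as $\frac{1}{E_{\alpha,1}(-\lambda T^\alpha)}\leq \bar{C}\,\frac{\lambda}{1-\alpha}$. Multiplying these two bounds, the factors $(1-\alpha)$ cancel exactly, leaving $C\bar{C}\,\lambda^{1/p}\cdot\lambda=C\bar{C}\,\lambda^{1+1/p}$, which gives the claim with $\tilde{C}=C\bar{C}$. The heart of the argument is therefore purely the cancellation of $(1-\alpha)$: the \emph{gain} of one power of $\lambda$ coming from the denominator blow-up is paid for by the extra factor $\lambda^{1/p}$ in Lemma~\ref{lem:rateE}, and the linear-in-$\frac{1}{1-\alpha}$ character of both bounds is exactly what makes them compatible.

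The one point requiring care, and the only genuine obstacle, is that the stability estimate \eqref{eqn:stability_est} is stated only for $\alpha\in[\tfrac34,1)$, whereas here $\alpha$ ranges over $[\alpha_0,1)$ for an arbitrary $\alpha_0\in(0,1)$. To close this gap I would re-derive the bound $\frac{1}{\lambda\,E_{\alpha,1}(-\lambda T^\alpha)}\leq \bar{C}(\alpha_0)/(1-\alpha)$ on the full range $[\alpha_0,1)$: the first inequality in \eqref{eqn:stability_est} already holds on all of $(0,1)$ by Lemma~\ref{lem:mlf-asympt-bound}, and the reflection-formula argument from the proof of Lemma~\ref{lem:mlf-stability_est} goes through verbatim once one bounds $\Gamma(\alpha)^{-1}$ and $\frac{\pi(1-\alpha)}{\sin(\pi(1-\alpha))}$ uniformly on $[\alpha_0,1)$, yielding a constant $\bar{C}$ that now depends on $\alpha_0$. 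Since $\tilde{C}$ is permitted to depend on $\alpha_0$ (and $p$), this dependence is harmless, and the remainder is a routine algebraic combination of the two preceding lemmas.
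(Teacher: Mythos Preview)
Your argument is correct and is exactly the approach the paper has in mind: the paper simply states that Lemma~\ref{lem:rateE} ``together with the stability estimate \eqref{eqn:stability_est}'' yields \eqref{eqn:boundH2}, and your decomposition into numerator and denominator followed by the cancellation of the $(1-\alpha)$ factors is precisely how these two ingredients combine. Your observation about the range mismatch ($\alpha\in[\tfrac34,1)$ in Lemma~\ref{lem:mlf-stability_est} versus $\alpha\in[\alpha_0,1)$ here) and your fix via the reflection formula with an $\alpha_0$-dependent constant are in fact more careful than the paper's own treatment, which leaves this point implicit.
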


\subsection{The Wright function}\label{sec:wright}

For $\mu,\ \rho \in \mathbb{R}$ with $\rho>-1$, the Wright function $W_{\rho,\mu}(z)$, \cite{Wright:1933}, is defined by
\begin{equation}\label{eqn:Wright}
W_{\rho,\mu}(z) = \sum_{k=0}^\infty \frac{z^k}{k!\Gamma(\rho k + \mu)}\quad z \in \mathbb{C}.
\end{equation}
For any $\rho>-1$, $\mu\in\mathbb{R}$, the Wright function
$E_{\rho,\mu}(z)$ is entire of order $1/(1+\rho)$.

The reason for the importance of this function in subdiffusion lies in
the fact that the Laplace transform of a Wright function is a Mittag-Leffler
function
\begin{equation}\label{Laplace-W-ML}
\mathcal{L}[W_{\rho,\mu}(x)](z) = z^{-1}E_{\rho,\mu}(z^{-1}).
\end{equation}
Of course this is really
used in reverse to obtain the inverse Laplace transform
of a certain Mittag-Leffler function.

One case of the Wright function relevant to fractional diffusion is the
following $M$-Wright function, \cite{Mainardi:1996}
\begin{equation}\label{eqn:M-Wright}
M_{\mu}(z) = W_{-\mu,1-\mu}(-z)=\sum_{k=0}^\infty \frac{(-1)^kz^k}{k!\Gamma(1-\mu(k+1))}
\end{equation}

\begin{lemma}\label{thm:MWright-Fourier}
For $\mu\in(0,1)$, the Fourier transform of $M_\mu(|x|)$ is given by
\begin{equation*}
  \mathcal{F}[M_\mu(|x|)](\xi) = 2E_{2\mu}(-\xi^2).
\end{equation*}
\end{lemma}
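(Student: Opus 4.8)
The plan is to reduce the claim to a Laplace-transform identity for the $M$-Wright function and then continue that identity analytically to the imaginary axis. Since $M_\mu(|x|)$ is even, the Fourier transform collapses to a cosine transform,
\[
\mathcal{F}[M_\mu(|x|)](\xi)=\int_{\mathbb{R}}M_\mu(|x|)e^{-i\xi x}\,dx
=2\int_0^\infty M_\mu(x)\cos(\xi x)\,dx,
\]
so it suffices to prove $\int_0^\infty M_\mu(x)\cos(\xi x)\,dx=E_{2\mu,1}(-\xi^2)$, with the leading factor $2$ accounting for the stated result.

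First I would record the moment formula
\[
\int_0^\infty x^n M_\mu(x)\,dx=\frac{n!}{\Gamma(\mu n+1)},\qquad n=0,1,2,\dots,
\]
which is the $s=n+1$ instance of the Mellin transform $\int_0^\infty x^{s-1}M_\mu(x)\,dx=\Gamma(s)/\Gamma(1+\mu(s-1))$; this follows from the Mellin--Barnes representation of the Wright function together with $M_\mu(z)=W_{-\mu,1-\mu}(-z)$. The structural fact that makes every moment finite (and, below, makes the transform entire) is that for $0<\mu<1$ the function $M_\mu$ decays like a stretched exponential, $M_\mu(x)\le C\,x^{a}\exp(-b\,x^{1/(1-\mu)})$ as $x\to\infty$, with exponent $1/(1-\mu)>1$.

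Next I would assemble these moments into the bilateral Laplace transform. Because of the stretched-exponential decay, $G(s):=\int_0^\infty M_\mu(x)e^{-sx}\,dx$ converges for every $s\in\mathbb{C}$ and defines an entire function whose Taylor coefficients at $s=0$ are the signed moments, whence
\[
G(s)=\sum_{n=0}^\infty\frac{(-s)^n}{n!}\int_0^\infty x^n M_\mu(x)\,dx
=\sum_{n=0}^\infty\frac{(-s)^n}{\Gamma(\mu n+1)}=E_{\mu,1}(-s).
\]
Evaluating the entire function $G$ at the purely imaginary points $s=\pm i\xi$ (legitimate precisely because $G$ is entire) and taking the even part annihilates the odd powers and doubles the even ones,
\[
\int_0^\infty M_\mu(x)\cos(\xi x)\,dx=\tfrac12\bigl(E_{\mu,1}(i\xi)+E_{\mu,1}(-i\xi)\bigr)
=\sum_{m=0}^\infty\frac{(-1)^m\xi^{2m}}{\Gamma(2\mu m+1)}=E_{2\mu,1}(-\xi^2),
\]
and multiplying by the symmetrization factor $2$ gives $\mathcal{F}[M_\mu(|x|)](\xi)=2E_{2\mu,1}(-\xi^2)$.

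The step I expect to require the most care is the moment/Mellin identity together with the decay estimate underlying it. A naive termwise Laplace transform of the Wright series fails here, since $M_\mu=W_{-\mu,1-\mu}(-\,\cdot\,)$ has negative first index $\rho=-\mu$, so the identity $\mathcal{L}[W_{\rho,\mu}]=z^{-1}E_{\rho,\mu}(z^{-1})$ of \eqref{Laplace-W-ML} does not apply directly and the series one would write diverges. Establishing $\int_0^\infty x^n M_\mu(x)\,dx=n!/\Gamma(\mu n+1)$ rigorously (via Mellin--Barnes, or via a subordination argument) and the attendant decay bound is therefore the crux; once these are in hand, the interchange of summation and integration and the continuation to the imaginary axis are routine consequences of $G$ being entire.
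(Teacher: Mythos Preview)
The paper does not supply its own proof of this lemma; it is quoted as background from the literature (Mainardi, 1996) alongside the other Wright-function facts in Section~\ref{sec:wright}, so there is nothing to compare against at the level of argument.

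Your proposal is correct and is in fact the standard derivation. The key ingredients---the stretched-exponential decay $M_\mu(x)\sim C\,x^{(\mu-1/2)/(1-\mu)}\exp\bigl(-c\,x^{1/(1-\mu)}\bigr)$ for $0<\mu<1$, the moment identity $\int_0^\infty x^n M_\mu(x)\,dx=\Gamma(n+1)/\Gamma(\mu n+1)$, and the resulting Laplace-transform formula $\int_0^\infty M_\mu(x)e^{-sx}\,dx=E_{\mu,1}(-s)$ valid for all $s\in\mathbb{C}$---are all well established (see, e.g., Mainardi's work or the Gorenflo--Kilbas--Mainardi--Rogosin monograph cited in the paper). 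Once $G(s)=E_{\mu,1}(-s)$ is known to be entire, the cosine-transform step $\tfrac12\bigl(G(i\xi)+G(-i\xi)\bigr)=E_{2\mu,1}(-\xi^2)$ is immediate from the power series.

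Your closing remark is apt: the generic identity \eqref{Laplace-W-ML} in the paper, $\mathcal{L}[W_{\rho,\mu}](z)=z^{-1}E_{\rho,\mu}(z^{-1})$, is stated for $\rho>-1$ but really only follows from termwise integration when $\rho>0$; for the $M$-Wright case $\rho=-\mu<0$ one needs either the Mellin--Barnes route you outline or a Hankel-contour argument, and the relevant Laplace pair is $\mathcal{L}[M_\mu](s)=E_{\mu,1}(-s)$ rather than the form \eqref{Laplace-W-ML}. So the ``crux'' you identify is genuine, but it is a documented result rather than a gap you would need to fill from scratch.
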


\subsection{Solution of the subdiffusion equation}\label{sec:subdiffusion_sol}

Combining the lemmas in sections \ref{sec:mlf} and \ref{sec:wright}
we obtain the following results

\begin{lemma}\label{lem:frac_relaxation}
The initial value problem for the
fractional ordinary differential equation
$\partial_t^\alpha  u + \lambda u = 0$ for  $x>0$ and $0<\alpha <1$
with $u(0)=1$, has solution $u(t)$ given by
\begin{equation}\label{eqn:frac_relaxation}
  u(t) = \revision{E_{\alpha,1}}(-\lambda t^\alpha) = \sum_{k=0}^\infty \frac{(-\lambda t^\alpha)^k}{\Gamma(k\alpha +1)}.
\end{equation}
The solution satisfies
\begin{equation}\label{eqn:frac_relaxation_sol}
  \revision{E_{\alpha,1}}(-\lambda x^\alpha) >0 \quad \mbox{and} \quad\frac{d}{dx} \revision{E_{\alpha,1}}(-\lambda x^\alpha)<0\quad \forall x>0.
\end{equation}
\end{lemma}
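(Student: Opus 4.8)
The plan is to treat the representation \eqref{eqn:frac_relaxation} and the two sign properties \eqref{eqn:frac_relaxation_sol} separately. For the representation I would verify directly that the series solves the initial value problem rather than deriving it from scratch. Recall that the Caputo derivative annihilates constants and acts on powers by $\partial_t^\alpha t^{\beta}=\frac{\Gamma(\beta+1)}{\Gamma(\beta-\alpha+1)}\,t^{\beta-\alpha}$ for $\beta>0$. Applying $\partial_t^\alpha$ termwise to $u(t)=\sum_{k\ge 0}\frac{(-\lambda t^\alpha)^k}{\Gamma(\alpha k+1)}$, the $k=0$ term drops out and, after reindexing $k\mapsto k+1$, the surviving series collapses to $-\lambda\,u(t)$; together with $u(0)=E_{\alpha,1}(0)=1/\Gamma(1)=1$ this shows that $u$ solves $\partial_t^\alpha u+\lambda u=0$. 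That $u$ is the \emph{only} such solution I would obtain from the Laplace transform: using $\mathcal{L}[\partial_t^\alpha u](\xi)=\xi^\alpha U(\xi)-\xi^{\alpha-1}u(0)$ turns the equation into $U(\xi)=\xi^{\alpha-1}/(\xi^\alpha+\lambda)$, whose inverse is exactly $E_{\alpha,1}(-\lambda t^\alpha)$ by the Wright--Mittag-Leffler relation \eqref{Laplace-W-ML}; this simultaneously ties the formula to Section~\ref{sec:wright} and settles uniqueness.

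Both sign properties I would read off from a closed form for the derivative together with the Mittag-Leffler bounds. Positivity is immediate and self-contained: for $x>0$ and $\lambda>0$ the argument $\lambda x^\alpha$ is positive, so the lower bound in Lemma~\ref{lem:mlf-asympt-bound} gives $E_{\alpha,1}(-\lambda x^\alpha)\ge \frac{1}{1+\Gamma(1-\alpha)\lambda x^\alpha}>0$. For the monotonicity I would first record, by differentiating the series termwise (equivalently, by combining the recurrence \eqref{eqn:ML_recur1} with the chain rule),
\begin{equation*}
\frac{d}{dx}E_{\alpha,1}(-\lambda x^\alpha)=-\lambda\, x^{\alpha-1}\,E_{\alpha,\alpha}(-\lambda x^\alpha).
\end{equation*}
Since $-\lambda x^{\alpha-1}<0$, the sign of the derivative is governed entirely by the sign of $E_{\alpha,\alpha}(-\lambda x^\alpha)$, so the whole question reduces to showing $E_{\alpha,\alpha}(-y)>0$ for $y>0$.

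This last positivity is where I expect the real work to lie, since Lemma~\ref{lem:mlf-asympt-bound} only controls $E_{\alpha,1}$, not $E_{\alpha,\alpha}$, and the defining series for $E_{\alpha,\alpha}(-y)$ is alternating, so its sign is not manifest. I would settle it by invoking complete monotonicity of the Mittag-Leffler functions: for $0<\alpha\le 1$ and $\beta\ge\alpha$ the map $y\mapsto E_{\alpha,\beta}(-y)$ is completely monotone on $(0,\infty)$, hence strictly positive (a classical fact available in the sources cited for Lemma~\ref{lem:ML_recurrence}). Taking $\beta=\alpha$ yields $E_{\alpha,\alpha}(-\lambda x^\alpha)>0$ and therefore $\frac{d}{dx}E_{\alpha,1}(-\lambda x^\alpha)<0$. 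As a cross-check one can bypass $E_{\alpha,\alpha}$ entirely: because $x\mapsto x^\alpha$ is a Bernstein function for $0<\alpha\le 1$ while $E_{\alpha,1}(-\cdot)$ is completely monotone, the composition $x\mapsto E_{\alpha,1}(-\lambda x^\alpha)$ is itself completely monotone in $x$, which delivers both inequalities of \eqref{eqn:frac_relaxation_sol} simultaneously; I would nonetheless prefer the explicit derivative identity, since it exhibits the strict negativity transparently.
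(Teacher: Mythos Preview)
Your argument is correct, and in fact more detailed than what the paper does: the paper states this lemma without proof, merely prefacing it with ``Combining the lemmas in sections~\ref{sec:mlf} and~\ref{sec:wright} we obtain the following results'' and relying on the references cited there. So there is no paper proof to compare against beyond that remark; you have supplied one where the authors defer to the literature.

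Two small points of polish. First, your appeal to \eqref{Laplace-W-ML} for the inverse transform of $\xi^{\alpha-1}/(\xi^\alpha+\lambda)$ is a slight mismatch: that formula records $\mathcal{L}[W_{\rho,\mu}](z)=z^{-1}E_{\rho,\mu}(z^{-1})$, which is not literally the identity $\mathcal{L}[E_{\alpha,1}(-\lambda t^\alpha)](\xi)=\xi^{\alpha-1}/(\xi^\alpha+\lambda)$ you actually use. The latter is standard (and is invoked elsewhere in the paper, e.g.\ in the proof of Lemma~\ref{lem:rateE}), but it is not a consequence of \eqref{Laplace-W-ML} as stated; cite it directly or verify it from the series. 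Second, your reduction of the monotonicity to $E_{\alpha,\alpha}(-y)>0$ via complete monotonicity is the right move and is precisely the kind of fact the paper's cited sources (e.g.\ \cite{MR3244285}) contain; the alternative Bernstein-function route you sketch is equally valid and arguably cleaner, since it avoids introducing $E_{\alpha,\alpha}$ at all.
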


From the above we easily obtain by separation of variables
and using the eigenvalues and -functions $\lambda_j\in\mathbb{R}^+$, $\phi_j\in H^2(\Omega)\cap H_0^1(\Omega)\,$ of $\,-\mathbb{L}$
\begin{lemma}\label{lem:subdiffusion_sol}
The solution of \eqref{eqn:subdiffusion} is given by
\begin{equation}\label{eqn:subdiffusion_sol}
u(x,t) = \sum_{n=0}^\infty \langle u_0,\phi_n\rangle
 \revision{E_{\alpha,1}}(-\lambda_n t^\alpha)\phi_n(x)
\end{equation}
\end{lemma}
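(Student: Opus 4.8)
The plan is to use separation of variables against the eigensystem $\{(\lambda_n,\phi_n)\}$ of $-\mathbb{L}$. First I would posit the ansatz $u(x,t)=\sum_{n}c_n(t)\phi_n(x)$ with unknown time-dependent coefficients $c_n(t)$, and expand the data as $u_0=\sum_n\langle u_0,\phi_n\rangle\phi_n$ in the orthonormal basis $\{\phi_n\}$. Since each $\phi_n\in H^2(\Omega)\cap H_0^1(\Omega)$ vanishes on $\partial\Omega$, any such series automatically satisfies the homogeneous Dirichlet boundary condition, so the boundary data require no further attention.

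Next I would substitute this ansatz into \eqref{eqn:subdiffusion}. Applying $\partial_t^\alpha$ termwise and using $-\mathbb{L}\phi_n=\lambda_n\phi_n$, the equation decouples into the family of scalar fractional relaxation ODEs $\partial_t^\alpha c_n(t)+\lambda_n c_n(t)=0$, each subject to the initial condition $c_n(0)=\langle u_0,\phi_n\rangle$ forced by $u(\cdot,0)=u_0$ together with orthonormality of the $\phi_n$. By Lemma~\ref{lem:frac_relaxation} the unique solution of the data-normalized problem (initial value $1$) is $E_{\alpha,1}(-\lambda_n t^\alpha)$, and by linearity scaling by $\langle u_0,\phi_n\rangle$ gives $c_n(t)=\langle u_0,\phi_n\rangle E_{\alpha,1}(-\lambda_n t^\alpha)$. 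Reassembling the series yields exactly the claimed representation \eqref{eqn:subdiffusion_sol}.

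The main obstacle, and the only point beyond formal manipulation, is justifying that the nonlocal operator $\partial_t^\alpha$ may be applied termwise, i.e.\ that $\partial_t^\alpha$ commutes with the infinite summation, and that the resulting series converges in the appropriate sense. To make this rigorous I would invoke the qualitative and quantitative control of the Mittag-Leffler factors: by Lemma~\ref{lem:frac_relaxation} each $E_{\alpha,1}(-\lambda_n t^\alpha)$ is positive and monotonically decreasing, and by Lemma~\ref{lem:mlf-asympt-bound} it is bounded by $1$ and decays like $1/(\Gamma(1+\alpha)^{-1}\lambda_n t^\alpha)$ for large argument. Combined with the summability of the coefficients $\langle u_0,\phi_n\rangle$ for $u_0$ in the natural space, these bounds give uniform convergence of the series and of its formal $\partial_t^\alpha$-image on compact subsets of $(0,T]$, which legitimizes the interchange. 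Finally I would confirm the initial condition using $E_{\alpha,1}(0)=1$, so that $u(\cdot,0)=\sum_n\langle u_0,\phi_n\rangle\phi_n=u_0$, completing the verification.
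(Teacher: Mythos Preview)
Your approach is correct and is essentially the same as the paper's: the paper simply states that the formula follows ``by separation of variables'' from Lemma~\ref{lem:frac_relaxation}, and you have filled in precisely those details (the eigenfunction ansatz, the decoupling into $\partial_t^\alpha c_n+\lambda_n c_n=0$, and the appeal to Lemma~\ref{lem:frac_relaxation}). Your additional remarks on justifying the termwise application of $\partial_t^\alpha$ via the Mittag-Leffler bounds go beyond what the paper spells out, but are in the same spirit.
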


By taking Fourier transforms in space and Laplace in time
using the above lemmas give,
\cite{Mainardi:1996}
\begin{lemma}\label{lem:fundamental_sol}
The fundamental solution $G_\alpha(x,t)$ is given by
\begin{equation}\label{eqn:subdiff_fund}
G_\alpha(x,t) = \frac{1}{2t^\frac{\alpha}{2}}M_\frac{\alpha}{2}\Bigl(\frac{|x|}{\sqrt{t^\alpha}}\Bigr).
\end{equation}
Note that for $\alpha\in(0,1)$, for every $t>0$,
the function $x\to G_\alpha(x,t)$ is not
differentiable at $x=0$; in fact it fails to be Lipschitz at $x=0$.
\end{lemma}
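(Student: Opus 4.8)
The plan is to establish \eqref{eqn:subdiff_fund} by a Fourier argument in space and then to read off the regularity of $x\mapsto G_\alpha(x,t)$ at the origin from the small-argument behaviour of the $M$-Wright function. For the formula, I would start from the free-space problem $\partial_t^\alpha u=\partial_{xx}u$, $u(\cdot,0)=\delta_0$, and apply the spatial Fourier transform, so that $\hat u(\xi,t)$ solves the scalar fractional relaxation equation $\partial_t^\alpha\hat u+\xi^2\hat u=0$ with $\hat u(\xi,0)=1$. By Lemma~\ref{lem:frac_relaxation} with $\lambda=\xi^2$ this gives $\hat u(\xi,t)=E_{\alpha,1}(-\xi^2t^\alpha)$. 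Inverting this transform is exactly what Lemma~\ref{thm:MWright-Fourier} enables: with $2\mu=\alpha$ it states $\mathcal{F}[M_{\alpha/2}(|\cdot|)](\eta)=2E_{\alpha,1}(-\eta^2)$, and applying the dilation rule $\mathcal{F}[f(\cdot/c)](\xi)=c\,\hat f(c\xi)$ with $c=t^{\alpha/2}$ shows that $\frac{1}{2t^{\alpha/2}}M_{\alpha/2}(|x|/t^{\alpha/2})$ has Fourier transform $E_{\alpha,1}(-\xi^2t^\alpha)$, i.e.\ it equals $u(\cdot,t)=G_\alpha(\cdot,t)$. The inversion is legitimate because, by Lemma~\ref{lem:mlf-asymptotic} with $N=1$, $\hat u(\xi,t)\sim(\Gamma(1-\alpha)t^\alpha)^{-1}\xi^{-2}$ as $|\xi|\to\infty$, so $\hat u(\cdot,t)$ is absolutely integrable.

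For the behaviour at $x=0$, I would expand $M_{\alpha/2}$ about its argument $z=0$ using the defining series \eqref{eqn:M-Wright}; since the $M$-Wright function is entire this is a genuine Taylor expansion, $M_{\alpha/2}(z)=\frac{1}{\Gamma(1-\alpha/2)}-\frac{z}{\Gamma(1-\alpha)}+O(z^2)$. Substituting $z=|x|/t^{\alpha/2}$ gives the local form $G_\alpha(x,t)=\frac{1}{2t^{\alpha/2}\Gamma(1-\alpha/2)}-\frac{|x|}{2t^\alpha\Gamma(1-\alpha)}+O(x^2)$ near the origin. The crux is that the coefficient $-1/(2t^\alpha\Gamma(1-\alpha))$ multiplying $|x|$ is finite and nonzero for every $\alpha\in(0,1)$ and $t>0$, vanishing only in the limit $\alpha\to1$ where $\Gamma(1-\alpha)\to\infty$ and the smooth Gaussian kernel is recovered. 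Hence the one-sided derivatives $\partial_xG_\alpha(0^\pm,t)=\mp1/(2t^\alpha\Gamma(1-\alpha))$ exist but disagree: $x\mapsto G_\alpha(x,t)$ is not differentiable at $x=0$, and its derivative jumps by $1/(t^\alpha\Gamma(1-\alpha))$ there, so $\partial_xG_\alpha(\cdot,t)$ is discontinuous and therefore fails to be Lipschitz at the origin.

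The step I expect to demand the most care is the regularity analysis: justifying termwise differentiation of the $M$-Wright series and controlling the $O(z^2)$ remainder uniformly for small $z$ so that the one-sided derivatives are rigorously identified, together with checking that the linear coefficient $-1/\Gamma(1-\alpha)$ stays finite and nonzero across the whole range $\alpha\in(0,1)$. The Fourier derivation of the formula itself is comparatively routine once the $\xi^{-2}$ decay from Lemma~\ref{lem:mlf-asymptotic} is invoked; in fact that same $\xi^{-2}$ tail is the transform-side signature of an $|x|$-type corner at the origin, and matching its coefficient gives an independent check of the derivative jump computed from the series.
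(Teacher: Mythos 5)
Your derivation of \eqref{eqn:subdiff_fund} is correct and is in substance the paper's own route: the paper offers no proof beyond the remark that the formula follows ``by taking Fourier transforms in space and Laplace in time using the above lemmas'' together with the citation of Mainardi, and your argument assembles exactly those ingredients --- Fourier in space reduces the problem to the relaxation equation of Lemma~\ref{lem:frac_relaxation}, whose solution $E_{\alpha,1}(-\xi^2t^\alpha)$ is inverted via Lemma~\ref{thm:MWright-Fourier} with the dilation $c=t^{\alpha/2}$; the integrability check through the $\xi^{-2}$ decay from Lemma~\ref{lem:mlf-asymptotic} is a justification the paper leaves implicit. The Taylor expansion of $M_{\alpha/2}$ at $0$ and the resulting one-sided derivatives $\partial_x G_\alpha(0^\pm,t)=\mp 1/\bigl(2t^\alpha\Gamma(1-\alpha)\bigr)$ are also correct, so non-differentiability at $x=0$ is rigorously established (termwise differentiation is unproblematic since $M_{\alpha/2}$ is entire).

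The gap is in the final clause. Your own local form $G_\alpha(x,t)=\frac{1}{2t^{\alpha/2}\Gamma(1-\alpha/2)}-\frac{|x|}{2t^\alpha\Gamma(1-\alpha)}+O(x^2)$ shows that $x\mapsto G_\alpha(x,t)$ \emph{is} Lipschitz at $x=0$: the difference quotients at the origin are bounded (by essentially $1/(2t^\alpha\Gamma(1-\alpha))$ plus the quadratic remainder), exactly as for $|x|$ itself. So your expansion proves non-differentiability but \emph{refutes}, rather than proves, the lemma's stronger assertion that the function ``fails to be Lipschitz at $x=0$.'' Your closing sentence quietly shifts the Lipschitz failure onto $\partial_x G_\alpha(\cdot,t)$ --- a discontinuous function is indeed not Lipschitz --- but that is a different and weaker statement than the one in the lemma, which concerns $G_\alpha(\cdot,t)$ itself. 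As written for the one-dimensional kernel \eqref{eqn:subdiff_fund}, the lemma's last clause is too strong: an entire function of $|x|$ with a corner is the generic $C^{0,1}\setminus C^1$ behaviour, and genuine failure of Lipschitz continuity (indeed unboundedness) of the subdiffusion fundamental solution at $x=0$ occurs only in space dimension $d\geq 2$. A correct write-up must either confine the conclusion to non-differentiability, reinterpreting ``Lipschitz'' as referring to the derivative (failure of $C^1$, or of $C^{1,1}$ regularity), or explicitly flag the clause as erroneous; it cannot be passed off as proved by the statement about $\partial_x G_\alpha$.
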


The limited smoothness of the fundamental solution results in
limited smoothness of the subdiffusion equation.
The following result, \cite{SakamotoYamamoto:2011}, is critical

\begin{lemma}\label{lem:subdiff_smoothness}
Let $0<\alpha<1$ and $u_0\in L^2(\Omega)$.
Then there exists a unique weak solution
$u \in C([0, T ]; L^2(\Omega)) \cap C((0,T]; H^2(\Omega) \cap H^1_0(\Omega)
\revision{)}
$ 
\Margin{Report C (13)}
to \eqref{eqn:subdiffusion} with $u(x,0)=u_0(x)$ such that
$\partial_t^\alpha u \in C((0,T]; L^2(\Omega))$
and a constant $C > 0$ such that
\begin{equation}\label{eqn:subdiff_reg}
\|u(\,\cdot\,,t)\|_{H^2} + \|\partial_t^\alpha u\|_{L^2}
\leq C t^{-\alpha} \|u_0\|_{L^2}
\end{equation}
\end{lemma}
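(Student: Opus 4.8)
The plan is to work entirely in the spectral basis supplied by Lemma~\ref{lem:subdiffusion_sol}, reducing every assertion to scalar estimates on the Mittag-Leffler multipliers $E_{\alpha,1}(-\lambda_n t^\alpha)$ appearing in the representation \eqref{eqn:subdiffusion_sol}. The single analytic input needed is the sharp upper bound of Lemma~\ref{lem:mlf-asympt-bound}: taking $x=\lambda_n t^\alpha$ gives
\[
\lambda_n\, E_{\alpha,1}(-\lambda_n t^\alpha)
\leq \frac{\lambda_n}{1+\Gamma(1+\alpha)^{-1}\lambda_n t^\alpha}
\leq \frac{\Gamma(1+\alpha)}{t^\alpha},
\]
the uniform decay that produces the factor $t^{-\alpha}$ in \eqref{eqn:subdiff_reg}. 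First I would fix the candidate solution as the series \eqref{eqn:subdiffusion_sol} and show it converges in the claimed spaces using this bound.

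For the regularity estimate I would invoke elliptic regularity for $\mathbb{L}$ under homogeneous Dirichlet data, so that $\|v\|_{H^2}$ is equivalent to $\|\mathbb{L}v\|_{L^2}$ on $H^2(\Omega)\cap H_0^1(\Omega)$ (the lower-order part being absorbed via the spectral gap $\lambda_1>0$). Since $-\mathbb{L}\phi_n=\lambda_n\phi_n$, Parseval's identity and the displayed bound yield
\[
\|\mathbb{L}u(\,\cdot\,,t)\|_{L^2}^2
=\sum_n \lambda_n^2\,|\langle u_0,\phi_n\rangle|^2\,E_{\alpha,1}(-\lambda_n t^\alpha)^2
\leq \frac{\Gamma(1+\alpha)^2}{t^{2\alpha}}\,\|u_0\|_{L^2}^2,
\]
which controls $\|u(\,\cdot\,,t)\|_{H^2}$. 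The term $\partial_t^\alpha u$ then requires no separate argument: each coefficient solves the fractional relaxation ODE of Lemma~\ref{lem:frac_relaxation}, so $\partial_t^\alpha u=\mathbb{L}u$ holds in $L^2$ and its norm inherits the identical bound. Summing the two contributions gives \eqref{eqn:subdiff_reg}.

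The continuity claims I would obtain by dominated convergence on the spectral series. For $u\in C([0,T];L^2(\Omega))$ the monotone bound $0<E_{\alpha,1}(-\lambda_n t^\alpha)\leq 1$ from Lemma~\ref{lem:frac_relaxation} dominates the tail uniformly in $t$, while continuity of each scalar multiplier, with value $1$ at $t=0$, recovers $u(\,\cdot\,,0)=u_0$; for the $H^2$ and $\partial_t^\alpha u$ continuity on $(0,T]$ the decay factor above dominates uniformly on each compact subinterval $[t_0,T]$. Uniqueness is immediate here: a weak solution with $u_0=0$ has every coefficient $w_n$ solving $\partial_t^\alpha w_n+\lambda_n w_n=0$ with $w_n(0)=0$, whose only solution is $w_n\equiv 0$ by Lemma~\ref{lem:frac_relaxation}, so $u\equiv 0$.

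The main obstacle I anticipate is not any individual estimate but the rigorous justification that this formal series is a genuine weak solution, namely that $\partial_t^\alpha$ commutes with the infinite sum and that the limited smoothness as $t\to0^+$ (recall from Lemma~\ref{lem:fundamental_sol} that the fundamental solution is not even Lipschitz) does not obstruct $L^2$ attainment of the initial datum. I would resolve this by arguing on partial sums, for which $\partial_t^\alpha$ acts termwise, and then passing to the limit using the uniform bounds above together with completeness of the spaces; the $t^{-\alpha}$ blow-up of the $H^2$ norm is integrable against the weak formulation, so no difficulty arises at $t=0$ in the $L^2$ topology.
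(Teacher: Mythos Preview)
The paper does not supply its own proof of this lemma; it simply quotes the result from Sakamoto--Yamamoto \cite{SakamotoYamamoto:2011}. Your argument is precisely the spectral approach used in that reference: expand in the eigenbasis, use the Mittag-Leffler bound of Lemma~\ref{lem:mlf-asympt-bound} to control $\lambda_n E_{\alpha,1}(-\lambda_n t^\alpha)$ uniformly by $\Gamma(1+\alpha)t^{-\alpha}$, convert to the $H^2$ norm via elliptic regularity, and read off $\partial_t^\alpha u$ directly from the equation. The sketch is correct.

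One small point on uniqueness: your argument presupposes that an arbitrary weak solution has coefficients $w_n(t)=\langle u(\cdot,t),\phi_n\rangle$ satisfying the scalar fractional ODE, which requires justifying that $\partial_t^\alpha$ commutes with the inner product $\langle\,\cdot\,,\phi_n\rangle$. This is indeed true under the regularity available (and is how Sakamoto--Yamamoto proceed), but strictly speaking it is a step beyond Lemma~\ref{lem:frac_relaxation}, which only asserts the solution formula for the initial value problem, not uniqueness. In a full write-up you would invoke a uniqueness theorem for the linear Caputo ODE (or equivalently the Gronwall-type inequality for Abel integral equations) to close this.
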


\subsection{Space fractional derivatives}\label{sec:space_frac_derivatives}

While one can use derivatives based on the Abel integral for space variables
there is also a considerable literature on fractional powers of operators.
For example, the Fourier transform of $-\triangle$ defined on $\real^n$ has
symbol $\xi^2$ and hence the fractional power of order $\beta$ of $-\triangle$
can be defined as the pseudodifferential operator whose symbol is $\xi^\beta$.
In the case of bounded domains $\Omega$ we can proceed as follows.

We define an operator $A$ in $L^2(\Omega)$ by
$\,(Au)(x) = 
\revision{(-\mathbb{L}} 
u)(x)$ 
\Margin{Report C (14)}
for $x\in\Omega$
with its domain $D(A)=H^2(\Omega)\cap H_0^1(\Omega)$.
Since
$A$ is a self-adjoint, uniformly elliptic operator, the spectrum of $A$ is
entirely composed of eigenvalues and counting according to the multiplicities,
we can set $0<\lambda_1\leq \lambda_2\ldots$.
By $\phi_j\in H^2(\Omega)\cap H_0^1(\Omega)$,
we denote the $L^2(\Omega)$ orthonormal eigenfunctions corresponding to
$\lambda_j$.
Then from \cite{Kato:1960}, the fractional power
$A^\beta$ is defined for any $\beta\in\mathbb{R}$ by
\begin{equation}\label{eqn:frac_power_op}
A^\beta f = \sum_1^\infty \lambda_j^\beta\langle f,\phi_j\rangle \phi_j
\end{equation}
Next we introduce a space $\dH s$ by
\begin{equation*}
  \dH s = \Bigl\{v\in L^2(\Omega): \sum_{j=1}^\infty \lambda_j^{s}|(v,\varphi_j)|^2<\infty\Bigr\}
\end{equation*}
and that $\dH s$ is a Hilbert space with the norm
%\begin{equation*}
$  \|v\|_{\dH s}^2=\sum_{j=1}^\infty \lambda_j^{s}|(v,\varphi_j)|^2$.
%\end{equation*}
By definition, we have the following equivalent form:
\begin{equation*}
  \|v\|_{\dH s}^2 = \sum_{j=1}^\infty |(v,\lambda_j^\frac{s}{2}\varphi_j)|^2 = \sum_{j=1}^\infty (v,A^\frac{s}{2}\varphi_j)^2=\sum_{j=1}^\infty (A^\frac{s}{2}v,\varphi_j)^2=\|A^\frac{s}{2}v\|_{L^2(\Omega)}^2.
\end{equation*}
We have $\dH s\subset H^{s}(\Omega)$ for $s>0$.
In particular,
$\dH 1 =H_0^1(\Omega)$.
Since $\dH s \subset L^2(\Omega)$, identifying
the dual $(L^2(\Omega))^\prime$ with itself, we have $\dH s \subset L^2(\Omega)
\subset (\dH s)^\prime$.
Henceforth, we set $\dH {-s}=(\dH s)^\prime$,
which consists of bounded linear functionals on $\dH s$.

The standard pseudoparabolic equation can 
\revision{be}
\Margin{Report C (16)}
generalized to elliptic operators
$A$ and $B$ not necessarily of the same order.
Continuing with same structure, assuming that both $A$ and $B$ are positive
operators in the sense that
$\langle Ax,x\rangle > 0$, $\langle Bx,x\rangle > 0$ we form the equation
\begin{equation*}
(I +\epsilon A)u_t + B u = 0
\end{equation*}
If both operators are of the same order we have a straightforward perturbation
of the pseudoparabolic equation.
If the order of $A$ is greater than that of $B$ then 
$C_\epsilon := (I +\epsilon A)^{-1}B$ will be bounded (in fact compact)
on $L^2$ and a full group $e^{-tC_\epsilon}$ will result.
Conversely, if the order of $B$  is greater than that of $A$
then  $C_\epsilon$ is unbounded and we will obtain a semigroup
once again from $C_\epsilon$ in the limit as $\epsilon\to 0$.

Our interest here is in the case that $B=-\mathbb{L}$ and 
$A = (-\mathbb{L})^\beta$; that is a fractional power of $-\mathbb{L}$.
\begin{equation}\label{eqn:beta-pseudoparabolic}
(I +\epsilon (-\mathbb{L})^\beta)u_t + \mathbb{L} u = 0
\end{equation}
This $\beta-$pseudoparabolic equation is no longer a regularizer for the
backwards parabolic equation $u_t + \mathbb{L}
\revision{u}
 = 0$ if $\beta<1$
\Margin{Report C (17)}
but we expect it to have partial regularizing properties and the exploration
of this will be studied in the next sections.

Finally, we can combine both space and time fractional derivatives to obtain
\begin{equation}\label{eqn:alpha-beta-pseudoparabolic}
(I +\epsilon (-\mathbb{L})^\beta)\partial_t^\alpha u + \mathbb{L} u = 0
\end{equation}

Define $\mu_n$ by $\mu_n = \frac{\lambda_n}{1 + \epsilon\lambda_n^\beta}$.
Then the solution to \eqref{eqn:beta-pseudoparabolic}
has the representation
\begin{equation}\label{eqn:beta-pseudoparabolic_sol}
u(x,t;\beta,\epsilon) = \sum_{n=1}^\infty \langle u_0,\phi_n\rangle
e^{-\mu_n T} \phi_n(x)
\end{equation}
while the solution to  equation \eqref{eqn:alpha-beta-pseudoparabolic} has 
the representation
\begin{equation}\label{eqn:beta-pseudoparabolic_frac_sol}
u(x,t;\alpha,\beta,\epsilon) = \sum_{n=1}^\infty \langle u_0,\phi_n\rangle
E_{\alpha,1}(-\mu_n\, t^\alpha) \phi_n(x)
\end{equation}

\subsection{The Morozov Discrepancy Principle}\label{sec:morozov}

As in every regularization method,
certain parameters have to be chosen appropriately as part of a trade-off
such that on one hand the ill-posed problem is stabilized,
but on the other the approximation error arising from the modification
of the problem by the addition of the  stabilizing terms
does not become too large. 
Regularization parameters appearing in the methods considered in this paper are,
for example: the fractional orders $\alpha$ and $\beta$ of the time
or space derivatives, respectively; the multiplier $\epsilon$ in these
 pseudoparabolic equation; and later in the paper, the indices $K_i$
at which we split the frequency band for treatment with different methods.

There exists a large body of literature on regularization parameter choices;
an overview on regularization parameter choice rules with many relevant
references can be found in \cite[Chapter 4]{EnglHankeNeubauer:1996},
\cite[Chapter 7]{Hansen1997}, and more recently, in 
\cite[Chapters 2,3]{LuPereverzyev2013}.

In this paper, we will follow a rather classical,
but also versatile, paradigm for regularization parameter choice,
namely the {\it discrepancy principle}.
This dates back to Morozov's well-known paper \cite{Morozov67}.  
The idea is to choose, out of a family of regularized problems,
the most stable one such that the residual is of the order of magnitude
of the expected noise level. 
In the context of, e.g. subdiffusion regularization
$u_0^\delta(x;\alpha)=u(x,0)$, where $u$ solves
\eqref{eqn:subdiffusion} with given noisy final data $\uTdel$,
the discrepancy principle requires one to choose $\alpha$ such that the
difference between the final data simulated from the reconstruction
$\exp(\mathbb{L}T)u_0^\delta(x;\alpha)$ differs from the noisy data
$\uTdel$ by not more than the noise level $\delta$,
while $\alpha$ is kept as far away as possibly from the critical value
$\hat{\alpha}=1$
\[
\alpha\in\mbox{argmin}\{\alpha'\, : \, \|\exp(\mathbb{L}T)u_0^\delta(\cdot,\alpha')-\uTdel\|_{L^2} \leq \delta\}\,,
\]
where $\delta$ is an estimate on the $L^2$ norm of the noise, cf. \eqref{eqn:delta}.
We will actually apply this in a relaxed, easier to compute manner,
and to a smoothed version of the data.

Of course a crucial point here is knowledge of the noise level
(or of a good estimate on it),
which is admittedly not available in some applications.
On the other, whenever $\delta$ is known, the discrepancy can often be
proven to yield a convergent regularization method,
even one with optimal convergence rates.

/bin/bash: a: command not found
\newdimen\xfiglen \newdimen\yfiglen
\xfiglen=4 true in
\yfiglen=2 true in
\newbox\figurelegend
\newbox\figurereconlegend
\newbox\figurereconlegendtwo
\newbox\figurereconlegendthree
\newbox\figureone
\newbox\figuretwo
\newbox\figurethree
\newbox\figurefour
\newbox\figurefive
%%%%%%%%%%%%%%%%%%

\setbox\figurelegend=\hbox{
\small
\beginpicture
  \setcoordinatesystem units <0.9true in,0.7true in> point at 0 -0.7
  \setplotarea x from 0 to 0.8, y from 0 to 0.5
\footnotesize
\linethickness=0.7pt
\setdots <2pt>
  \putrule from 0 0.4 to 0.2 0.4  %\relax
  \OliveGreen{\relax
  \putrule from 0 0.4 to 0.2 0.4 }\relax
  \put {$\alpha = 0.5$} [l] at 0.3 0.4
  \Blue{\relax
   \putrule from 0 0.2 to 0.2 0.2 }\relax
\setdashes <3pt>
  \putrule from 0 0.2 to 0.2 0.2
  \put {$\alpha = 0.9$} [l] at 0.3 0.2
\setsolid
  \putrule from 0 0.0 to 0.2 0.0
  \put {$\alpha = 1$} [l] at 0.3 0.0
\endpicture
\relax
}
\setbox\figurereconlegend=\hbox{
\small
\beginpicture
  \setcoordinatesystem units <1true in,1true in> point at 0 -0.7
  \setplotarea x from 0 to 0.8, y from 0 to 0.7
\footnotesize
\linethickness=0.8pt
\put {$T=0.02$\ $\;\delta=0.001$} [l] at 0 0.6
\setsolid
  \putrule from 0 0.4 to 0.2 0.4
  \put {Actual $u_0$} [l] at 0.3 0.4
\setplotsymbol ({\sevenrm .})
\setdots <2pt>
  \putrule from 0 0.2 to 0.2 0.2  %\relax
  \OliveGreen{\relax
  \putrule from 0 0.2 to 0.2 0.2 }\relax
%%% BK 2018-10-20  \put {2-Split freq} [l] at 0.3 0.2
  \put {single split freq} [l] at 0.3 0.2 %%% BK 2018-10-20
  \Blue{\relax
   \putrule from 0 0.0 to 0.2 0.0 }\relax
\setdashes <3pt>
  \putrule from 0 0.0 to 0.2 0.0
%%% BK 2018-10-20  \put {3-split freq} [l] at 0.3 0.0
  \put {double split freq} [l] at 0.3 0.0 %%% BK 2018-10-20
\endpicture
}
\setbox\figurereconlegendtwo=\hbox{
\small
\beginpicture
  \setcoordinatesystem units <1true in,0.8true in> point at 0 -0.7
  \setplotarea x from 0 to 0.8, y from 0 to 0.7
\footnotesize
\linethickness=0.8pt
\put {$T=0.02$\ $\;\delta=0.01$} [l] at 0 0.6
\setsolid
  \putrule from 0 0.4 to 0.2 0.4
  \put {Actual $u_0$} [l] at 0.3 0.4
\setdots <2pt>
  \putrule from 0 0.2 to 0.2 0.2  %\relax
  \Red{\relax
  \putrule from 0 0.2 to 0.2 0.2 }\relax
  \put {SVD} [l] at 0.3 0.2
  \Blue{\relax
   \putrule from 0 0.0 to 0.2 0.0 }\relax
\setdashes <3pt>
  \putrule from 0 0.0 to 0.2 0.0
%%% BK 2018-10-20  \put {3-split freq} [l] at 0.3 0.0
  \put {double split freq} [l] at 0.3 0.0 %%% BK 2018-10-20
\endpicture
}
\setbox\figurereconlegendthree=\hbox{
\small
\beginpicture
  \setcoordinatesystem units <1true in,0.8true in> point at 0 -0.7
  \setplotarea x from -0.2 to 0.8, y from 0 to 0.9
\footnotesize
\linethickness=0.8pt
\put {$T=0.02$\ $\;\delta=0.001$} [l] at 0 0.8
\setsolid
  \putrule from -0.1 0.6 to 0.2 0.6
  \put {Actual $u_0$} [l] at 0.3 0.6
\setdots <2pt>
  \Red{\relax
  \putrule from -0.1 0.4 to 0.2 0.4 }\relax
%%% BK 2018-10-20  \put {single-split freq} [l] at 0.3 0.4
  \put {SVD} [l] at 0.3 0.4 %%% BK 2018-10-20
\setdashes <4pt>
  \OliveGreen{\relax
  \putrule from -0.1 0.2 to 0.2 0.2 }\relax
  \put {double-split freq} [l] at 0.3 0.2
\setdashpattern <3pt,2pt,1pt,2pt> 
  \Blue{\relax
   \putrule from -0.1 0.0 to 0.2 0.0 }\relax
  \put {triple-split freq} [l] at 0.3 0.0
\endpicture
}
\setbox\figureone=\vbox{\hsize=\xfiglen
\beginpicture
\footnotesize
  \setcoordinatesystem units <0.00136\xfiglen,0.33\yfiglen> % point at
  \setplotarea x from 0 to 600, y from 0 to 3
  \axis bottom shiftedto y=0 ticks short numbered from 0 to 600 by 100 /
  \axis left ticks short withvalues ${10^0}$ ${10^1}$ ${10^2}$ ${10^3}$ / at 0 1 2 3 / /
\small
\put {\copy\figurelegend} [rb] at 600 0.2
\put {$\lambda$} [lb] at 600 0.1
\put {$\log_{10}(A)$} [l] at 10 3
%\put {{\bf Amplification factor $A(\lambda,\alpha)$}} [t]  at 300 -0.5
\linethickness=0.8pt
\setplotsymbol ({\eightrm .})
\setquadratic
% alpha = 0.5
 \setdots <3pt>
 \OliveGreen{\relax
\plot
         0         0
   15.0000    0.4927
   30.0000    0.7471
   45.0000    0.9119
   60.0000    1.0326
   75.0000    1.1274
   90.0000    1.2055
  105.0000    1.2717
  120.0000    1.3293
  135.0000    1.3801
  150.0000    1.4256
  165.0000    1.4669
  180.0000    1.5045
  195.0000    1.5392
  210.0000    1.5713
  225.0000    1.6012
  240.0000    1.6292
  255.0000    1.6554
  270.0000    1.6802
  285.0000    1.7037
  300.0000    1.7259
  315.0000    1.7471
  330.0000    1.7673
  345.0000    1.7866
  360.0000    1.8050
  375.0000    1.8228
  390.0000    1.8398
  405.0000    1.8562
  420.0000    1.8719
  435.0000    1.8872
  450.0000    1.9019
  465.0000    1.9161
  480.0000    1.9299
  495.0000    1.9433
  510.0000    1.9562
  525.0000    1.9688
  540.0000    1.9810
  555.0000    1.9929
  570.0000    2.0045
  585.0000    2.0158
  600.0000    2.0268
 /\relax}\relax
%
% alpha = 0.9
\setplotsymbol ({\sevenrm .})
 \setdashes <4pt>
 \Blue{\relax
 \plot
         0         0
   15.0000    0.1059
   30.0000    0.2089
   45.0000    0.3088
   60.0000    0.4053
   75.0000    0.4982
   90.0000    0.5874
  105.0000    0.6727
  120.0000    0.7540
  135.0000    0.8313
  150.0000    0.9043
  165.0000    0.9733
  180.0000    1.0382
  195.0000    1.0990
  210.0000    1.1560
  225.0000    1.2092
  240.0000    1.2589
  255.0000    1.3053
  270.0000    1.3486
  285.0000    1.3890
  300.0000    1.4267
  315.0000    1.4620
  330.0000    1.4950
  345.0000    1.5259
  360.0000    1.5550
  375.0000    1.5824
  390.0000    1.6082
  405.0000    1.6326
  420.0000    1.6557
  435.0000    1.6777
  450.0000    1.6986
  465.0000    1.7185
  480.0000    1.7375
  495.0000    1.7557
  510.0000    1.7731
  525.0000    1.7899
  540.0000    1.8059
  555.0000    1.8214
  570.0000    1.8364
  585.0000    1.8508
  600.0000    1.8647
 /\relax}\relax
%
% alpha = 1  % log10 scale
 \setsolid
\setlinear
 \Black{\relax
  \plot
         0         0
   60.0000   0.2606
  120.0000   0.5212
  180.0000   0.7817
  240.0000   1.0423
  300.0000   1.3029
  360.0000   1.5635
  420.0000   1.8240
  480.0000   2.0846
  540.0000   2.3452
  600.0000   2.6058
  /\relax}\relax
\endpicture
}
\xfiglen=3 true in
\yfiglen= 2.3 true in
\setbox\figuretwo=\vbox{\hsize=\xfiglen
\beginpicture
\footnotesize
  \setcoordinatesystem units <\xfiglen,0.5\yfiglen> % point at
  \setplotarea x from 0 to 1, y from 0 to 2
  \axis bottom shiftedto y=0 ticks short numbered from 0 to 1 by 0.2 /
  \axis left ticks short numbered from 0 to 2 by 0.5 /
\small
\put {\copy\figurereconlegend} [rt] at 0.95 2
\put {$x$} [lb] at 1 0.05
\put {$u_0(x)$} [lt] at 0.02 2
\setplotsymbol ({\sevenrm .})
\setquadratic
\setsolid
\Black{\relax % exact u_0
\plot
         0         0
    0.0100    0.1051
    0.0200    0.2192
    0.0300    0.3402
    0.0400    0.4660
    0.0500    0.5944
    0.0600    0.7235
    0.0700    0.8511
    0.0800    0.9753
    0.0900    1.0942
    0.1000    1.2061
    0.1100    1.3096
    0.1200    1.4034
    0.1300    1.4862
    0.1400    1.5573
    0.1500    1.6159
    0.1600    1.6616
    0.1700    1.6942
    0.1800    1.7135
    0.1900    1.7197
    0.2000    1.7132
    0.2100    1.6945
    0.2200    1.6641
    0.2300    1.6230
    0.2400    1.5719
    0.2500    1.5120
    0.2600    1.4442
    0.2700    1.3697
    0.2800    1.2896
    0.2900    1.2052
    0.3000    1.1176
    0.3100    1.0281
    0.3200    0.9376
    0.3300    0.8474
    0.3400    0.7584
    0.3500    0.6716
    0.3600    0.5878
    0.3700    0.5080
    0.3800    0.4327
    0.3900    0.3625
    0.4000    0.2980
    0.4100    0.2395
    0.4200    0.1873
    0.4300    0.1416
    0.4400    0.1025
    0.4500    0.0699
    0.4600    0.0439
    0.4700    0.0241
    0.4800    0.0105
    0.4900    0.0026
    0.5000         0
    0.5100    0.0224
    0.5200    0.0493
    0.5300    0.0802
    0.5400    0.1145
    0.5500    0.1518
    0.5600    0.1915
    0.5700    0.2330
    0.5800    0.2759
    0.5900    0.3196
    0.6000    0.3635
    0.6100    0.4074
    0.6200    0.4506
    0.6300    0.4929
    0.6400    0.5338
    0.6500    0.5730
    0.6600    0.6104
    0.6700    0.6456
    0.6800    0.6784
    0.6900    0.7088
    0.7000    0.7366
    0.7100    0.7619
    0.7200    0.7845
    0.7300    0.8046
    0.7400    0.8222
    0.7500    0.8374
    0.7600    0.8103
    0.7700    0.7812
    0.7800    0.7502
    0.7900    0.7174
    0.8000    0.6832
    0.8100    0.6477
    0.8200    0.6112
    0.8300    0.5739
    0.8400    0.5361
    0.8500    0.4979
    0.8600    0.4596
    0.8700    0.4214
    0.8800    0.3835
    0.8900    0.3462
    0.9000    0.3094
    0.9100    0.2735
    0.9200    0.2385
    0.9300    0.2045
    0.9400    0.1716
    0.9500    0.1399
    0.9600    0.1095
    0.9700    0.0803
    0.9800    0.0523
    0.9900    0.0256
    1.0000         0
 /\relax}\relax
\setplotsymbol ({\ninerm .})
\setdots <3pt> 
\OliveGreen{\relax % split frequency
\plot
         0         0
    0.0100    0.1377
    0.0200    0.2751
    0.0300    0.4119
    0.0400    0.5471
    0.0500    0.6794
    0.0600    0.8072
    0.0700    0.9285
    0.0800    1.0419
    0.0900    1.1463
    0.1000    1.2412
    0.1100    1.3268
    0.1200    1.4040
    0.1300    1.4736
    0.1400    1.5362
    0.1500    1.5917
    0.1600    1.6391
    0.1700    1.6767
    0.1800    1.7019
    0.1900    1.7126
    0.2000    1.7071
    0.2100    1.6847
    0.2200    1.6464
    0.2300    1.5944
    0.2400    1.5317
    0.2500    1.4617
    0.2600    1.3874
    0.2700    1.3107
    0.2800    1.2325
    0.2900    1.1528
    0.3000    1.0709
    0.3100    0.9865
    0.3200    0.8999
    0.3300    0.8123
    0.3400    0.7262
    0.3500    0.6443
    0.3600    0.5693
    0.3700    0.5029
    0.3800    0.4458
    0.3900    0.3969
    0.4000    0.3542
    0.4100    0.3149
    0.4200    0.2768
    0.4300    0.2386
    0.4400    0.2001
    0.4500    0.1628
    0.4600    0.1288
    0.4700    0.1005
    0.4800    0.0799
    0.4900    0.0678
    0.5000    0.0641
    0.5100    0.0677
    0.5200    0.0770
    0.5300    0.0908
    0.5400    0.1082
    0.5500    0.1294
    0.5600    0.1552
    0.5700    0.1867
    0.5800    0.2244
    0.5900    0.2683
    0.6000    0.3173
    0.6100    0.3690
    0.6200    0.4210
    0.6300    0.4703
    0.6400    0.5147
    0.6500    0.5532
    0.6600    0.5855
    0.6700    0.6125
    0.6800    0.6356
    0.6900    0.6562
    0.7000    0.6754
    0.7100    0.6936
    0.7200    0.7106
    0.7300    0.7254
    0.7400    0.7372
    0.7500    0.7449
    0.7600    0.7480
    0.7700    0.7459
    0.7800    0.7389
    0.7900    0.7270
    0.8000    0.7107
    0.8100    0.6901
    0.8200    0.6656
    0.8300    0.6375
    0.8400    0.6063
    0.8500    0.5725
    0.8600    0.5366
    0.8700    0.4994
    0.8800    0.4614
    0.8900    0.4232
    0.9000    0.3848
    0.9100    0.3464
    0.9200    0.3078
    0.9300    0.2691
    0.9400    0.2303
    0.9500    0.1913
    0.9600    0.1524
    0.9700    0.1137
    0.9800    0.0755
    0.9900    0.0376
    1.0000    0.0000
 /\relax}\relax
 \setdashes <4pt> 
\Blue{\relax % double split frequency
\plot
         0         0
    0.0100    0.1465
    0.0200    0.2913
    0.0300    0.4331
    0.0400    0.5702
    0.0500    0.7016
    0.0600    0.8260
    0.0700    0.9427
    0.0800    1.0510
    0.0900    1.1508
    0.1000    1.2417
    0.1100    1.3238
    0.1200    1.3971
    0.1300    1.4614
    0.1400    1.5167
    0.1500    1.5629
    0.1600    1.5998
    0.1700    1.6274
    0.1800    1.6456
    0.1900    1.6546
    0.2000    1.6546
    0.2100    1.6459
    0.2200    1.6285
    0.2300    1.6026
    0.2400    1.5680
    0.2500    1.5245
    0.2600    1.4717
    0.2700    1.4096
    0.2800    1.3384
    0.2900    1.2586
    0.3000    1.1713
    0.3100    1.0778
    0.3200    0.9799
    0.3300    0.8797
    0.3400    0.7789
    0.3500    0.6796
    0.3600    0.5834
    0.3700    0.4918
    0.3800    0.4061
    0.3900    0.3273
    0.4000    0.2565
    0.4100    0.1945
    0.4200    0.1419
    0.4300    0.0990
    0.4400    0.0660
    0.4500    0.0427
    0.4600    0.0284
    0.4700    0.0223
    0.4800    0.0235
    0.4900    0.0310
    0.5000    0.0437
    0.5100    0.0610
    0.5200    0.0823
    0.5300    0.1071
    0.5400    0.1352
    0.5500    0.1664
    0.5600    0.2004
    0.5700    0.2369
    0.5800    0.2757
    0.5900    0.3164
    0.6000    0.3584
    0.6100    0.4014
    0.6200    0.4450
    0.6300    0.4886
    0.6400    0.5318
    0.6500    0.5740
    0.6600    0.6148
    0.6700    0.6532
    0.6800    0.6887
    0.6900    0.7202
    0.7000    0.7469
    0.7100    0.7681
    0.7200    0.7832
    0.7300    0.7916
    0.7400    0.7933
    0.7500    0.7883
    0.7600    0.7768
    0.7700    0.7595
    0.7800    0.7369
    0.7900    0.7099
    0.8000    0.6794
    0.8100    0.6461
    0.8200    0.6110
    0.8300    0.5746
    0.8400    0.5376
    0.8500    0.5005
    0.8600    0.4637
    0.8700    0.4273
    0.8800    0.3917
    0.8900    0.3568
    0.9000    0.3228
    0.9100    0.2896
    0.9200    0.2570
    0.9300    0.2249
    0.9400    0.1932
    0.9500    0.1615
    0.9600    0.1297
    0.9700    0.0977
    0.9800    0.0654
    0.9900    0.0328
    1.0000    0.0000
 /\relax}\relax
\endpicture
}
\setbox\figurethree=\vbox{\hsize=\xfiglen
\beginpicture
\footnotesize
  \setcoordinatesystem units <\xfiglen,0.15\yfiglen> % point at
  \setplotarea x from 0 to 1, y from -2 to 4.5
  \axis bottom shiftedto y=0 ticks short numbered from 0 to 1 by 0.2 /
  \axis left ticks short numbered from -2 to 4 by 1 /
\small
\put {\copy\figurereconlegendtwo} [rt] at 1.02 4.5
\put {$x$} [lb] at 1 0.05
\put {$u_0(x)$} [lb] at 0.01 4.4
\setquadratic
\setsolid
\setplotsymbol ({\eightrm .})
\Black{\relax % exact u_0
\plot
        0         0
    0.0100    1.4702
    0.0200    2.7518
    0.0300    3.6928
    0.0400    4.2064
    0.0500    4.2852
    0.0600    3.9986
    0.0700    3.4743
    0.0800    2.8681
    0.0900    2.3302
    0.1000    1.9744
    0.1100    1.8572
    0.1200    1.9709
    0.1300    2.2504
    0.1400    2.5927
    0.1500    2.8824
    0.1600    3.0190
    0.1700    2.9385
    0.1800    2.6275
    0.1900    2.1253
    0.2000    1.5158
    0.2100    0.9105
    0.2200    0.4263
    0.2300    0.1630
    0.2400    0.1833
    0.2500    0.4994
    0.2600    1.0692
    0.2700    1.8011
    0.2800    2.5689
    0.2900    3.2329
    0.3000    3.6644
    0.3100    3.7696
    0.3200    3.5077
    0.3300    2.8998
    0.3400    2.0265
    0.3500    1.0139
    0.3600    0.0101
    0.3700   -0.8428
    0.3800   -1.4359
    0.3900   -1.7137
    0.4000   -1.6833
    0.4100   -1.4095
    0.4200   -0.9977
    0.4300   -0.5680
    0.4400   -0.2270
    0.4500   -0.0432
    0.4600   -0.0327
    0.4700   -0.1581
    0.4800   -0.3397
    0.4900   -0.4780
    0.5000   -0.4800
    0.5100   -0.2835
    0.5200    0.1264
    0.5300    0.7119
    0.5400    1.3908
    0.5500    2.0529
    0.5600    2.5844
    0.5700    2.8902
    0.5800    2.9127
    0.5900    2.6414
    0.6000    2.1124
    0.6100    1.3995
    0.6200    0.5986
    0.6300   -0.1893
    0.6400   -0.8751
    0.6500   -1.3931
    0.6600   -1.7078
    0.6700   -1.8149
    0.6800   -1.7369
    0.6900   -1.5160
    0.7000   -1.2050
    0.7100   -0.8577
    0.7200   -0.5207
    0.7300   -0.2276
    0.7400    0.0054
    0.7500    0.1794
    0.7600    0.3101
    0.7700    0.4218
    0.7800    0.5398
    0.7900    0.6820
    0.8000    0.8523
    0.8100    1.0365
    0.8200    1.2039
    0.8300    1.3129
    0.8400    1.3205
    0.8500    1.1939
    0.8600    0.9202
    0.8700    0.5123
    0.8800    0.0095
    0.8900   -0.5289
    0.9000   -1.0350
    0.9100   -1.4456
    0.9200   -1.7142
    0.9300   -1.8198
    0.9400   -1.7680
    0.9500   -1.5864
    0.9600   -1.3154
    0.9700   -0.9962
    0.9800   -0.6611
    0.9900   -0.3279
    1.0000    0.0000
 /\relax}\relax
\setplotsymbol ({\ninerm .})
\setdots <3pt> 
\Red{\relax % SVd
\plot
         0         0
    0.0100    0.3250
    0.0200    0.6460
    0.0300    0.9588
    0.0400    1.2597
    0.0500    1.5449
    0.0600    1.8110
    0.0700    2.0550
    0.0800    2.2742
    0.0900    2.4662
    0.1000    2.6293
    0.1100    2.7621
    0.1200    2.8638
    0.1300    2.9339
    0.1400    2.9728
    0.1500    2.9809
    0.1600    2.9595
    0.1700    2.9100
    0.1800    2.8345
    0.1900    2.7353
    0.2000    2.6149
    0.2100    2.4763
    0.2200    2.3226
    0.2300    2.1568
    0.2400    1.9823
    0.2500    1.8023
    0.2600    1.6201
    0.2700    1.4387
    0.2800    1.2611
    0.2900    1.0900
    0.3000    0.9277
    0.3100    0.7766
    0.3200    0.6383
    0.3300    0.5143
    0.3400    0.4057
    0.3500    0.3133
    0.3600    0.2373
    0.3700    0.1778
    0.3800    0.1344
    0.3900    0.1064
    0.4000    0.0929
    0.4100    0.0926
    0.4200    0.1040
    0.4300    0.1256
    0.4400    0.1556
    0.4500    0.1921
    0.4600    0.2333
    0.4700    0.2774
    0.4800    0.3224
    0.4900    0.3668
    0.5000    0.4088
    0.5100    0.4471
    0.5200    0.4804
    0.5300    0.5076
    0.5400    0.5279
    0.5500    0.5408
    0.5600    0.5458
    0.5700    0.5429
    0.5800    0.5322
    0.5900    0.5139
    0.6000    0.4885
    0.6100    0.4568
    0.6200    0.4195
    0.6300    0.3777
    0.6400    0.3322
    0.6500    0.2842
    0.6600    0.2347
    0.6700    0.1849
    0.6800    0.1359
    0.6900    0.0885
    0.7000    0.0437
    0.7100    0.0024
    0.7200   -0.0349
    0.7300   -0.0676
    0.7400   -0.0952
    0.7500   -0.1176
    0.7600   -0.1347
    0.7700   -0.1466
    0.7800   -0.1533
    0.7900   -0.1553
    0.8000   -0.1530
    0.8100   -0.1469
    0.8200   -0.1375
    0.8300   -0.1255
    0.8400   -0.1116
    0.8500   -0.0965
    0.8600   -0.0808
    0.8700   -0.0651
    0.8800   -0.0499
    0.8900   -0.0359
    0.9000   -0.0233
    0.9100   -0.0126
    0.9200   -0.0038
    0.9300    0.0028
    0.9400    0.0072
    0.9500    0.0096
    0.9600    0.0102
    0.9700    0.0091
    0.9800    0.0068
    0.9900    0.0036
    1.0000    0.0000
 /\relax}\relax
 \setdashes <4pt> 
\Blue{\relax % double split frequency
\plot
         0         0
    0.0100    0.9592
    0.0200    1.8662
    0.0300    2.6725
    0.0400    3.3368
    0.0500    3.8279
    0.0600    4.1272
    0.0700    4.2294
    0.0800    4.1433
    0.0900    3.8903
    0.1000    3.5033
    0.1100    3.0230
    0.1200    2.4954
    0.1300    1.9674
    0.1400    1.4838
    0.1500    1.0836
    0.1600    0.7974
    0.1700    0.6452
    0.1800    0.6355
    0.1900    0.7645
    0.2000    1.0170
    0.2100    1.3673
    0.2200    1.7813
    0.2300    2.2194
    0.2400    2.6394
    0.2500    2.9999
    0.2600    3.2641
    0.2700    3.4021
    0.2800    3.3943
    0.2900    3.2327
    0.3000    2.9214
    0.3100    2.4764
    0.3200    1.9243
    0.3300    1.2995
    0.3400    0.6421
    0.3500   -0.0059
    0.3600   -0.6033
    0.3700   -1.1129
    0.3800   -1.5040
    0.3900   -1.7542
    0.4000   -1.8512
    0.4100   -1.7933
    0.4200   -1.5891
    0.4300   -1.2574
    0.4400   -0.8253
    0.4500   -0.3261
    0.4600    0.2027
    0.4700    0.7230
    0.4800    1.1983
    0.4900    1.5970
    0.5000    1.8940
    0.5100    2.0727
    0.5200    2.1258
    0.5300    2.0551
    0.5400    1.8714
    0.5500    1.5928
    0.5600    1.2432
    0.5700    0.8506
    0.5800    0.4441
    0.5900    0.0528
    0.6000   -0.2970
    0.6100   -0.5836
    0.6200   -0.7906
    0.6300   -0.9088
    0.6400   -0.9358
    0.6500   -0.8766
    0.6600   -0.7422
    0.6700   -0.5488
    0.6800   -0.3163
    0.6900   -0.0662
    0.7000    0.1796
    0.7100    0.4014
    0.7200    0.5821
    0.7300    0.7092
    0.7400    0.7749
    0.7500    0.7769
    0.7600    0.7181
    0.7700    0.6059
    0.7800    0.4520
    0.7900    0.2704
    0.8000    0.0771
    0.8100   -0.1120
    0.8200   -0.2823
    0.8300   -0.4214
    0.8400   -0.5205
    0.8500   -0.5744
    0.8600   -0.5822
    0.8700   -0.5469
    0.8800   -0.4751
    0.8900   -0.3758
    0.9000   -0.2599
    0.9100   -0.1391
    0.9200   -0.0247
    0.9300    0.0731
    0.9400    0.1466
    0.9500    0.1902
    0.9600    0.2021
    0.9700    0.1833
    0.9800    0.1383
    0.9900    0.0742
    1.0000    0.0000
 /\relax}\relax
\endpicture
}
\xfiglen=4.5 true in
\yfiglen=2.25true in
\setbox\figurefour=\vbox{\hsize=\xfiglen
\beginpicture
\footnotesize
  \setcoordinatesystem units <\xfiglen,0.15\yfiglen> % point at
  \setplotarea x from 0 to 1, y from -2 to 5.4
  \axis bottom shiftedto y=0 ticks short numbered from 0 to 1 by 0.2 /
  \axis left ticks short numbered from -2 to 5 by 1 /
\small
\put {\copy\figurereconlegendthree} [rt] at 0.98  5
\put {$x$} [lb] at 1 0.06
\put {$u_0(x)$} [lb] at 0.01 5.3
\setquadratic
\setsolid
\setplotsymbol ({\eightrm .})
\Black{\relax % exact u_0
\plot
         0         0
    0.0100    1.4947
    0.0200    2.8232
    0.0300    3.8561
    0.0400    4.5263
    0.0500    4.8384
    0.0600    4.8577
    0.0700    4.6850
    0.0800    4.4250
    0.0900    4.1578
    0.1000    3.9223
    0.1100    3.7161
    0.1200    3.5083
    0.1300    3.2615
    0.1400    2.9528
    0.1500    2.5883
    0.1600    2.2047
    0.1700    1.8587
    0.1800    1.6093
    0.1900    1.4988
    0.2000    1.5388
    0.2100    1.7071
    0.2200    1.9542
    0.2300    2.2183
    0.2400    2.4418
    0.2500    2.5849
    0.2600    2.6310
    0.2700    2.5843
    0.2800    2.4610
    0.2900    2.2781
    0.3000    2.0451
    0.3100    1.7614
    0.3200    1.4198
    0.3300    1.0147
    0.3400    0.5512
    0.3500    0.0508
    0.3600   -0.4487
    0.3700   -0.8992
    0.3800   -1.2526
    0.3900   -1.4718
    0.4000   -1.5388
    0.4100   -1.4569
    0.4200   -1.2470
    0.4300   -0.9404
    0.4400   -0.5695
    0.4500   -0.1611
    0.4600    0.2663
    0.4700    0.6999
    0.4800    1.1283
    0.4900    1.5359
    0.5000    1.9000
    0.5100    2.1918
    0.5200    2.3819
    0.5300    2.4473
    0.5400    2.3784
    0.5500    2.1828
    0.5600    1.8842
    0.5700    1.5175
    0.5800    1.1211
    0.5900    0.7283
    0.6000    0.3633
    0.6100    0.0387
    0.6200   -0.2409
    0.6300   -0.4745
    0.6400   -0.6601
    0.6500   -0.7916
    0.6600   -0.8599
    0.6700   -0.8566
    0.6800   -0.7788
    0.6900   -0.6333
    0.7000   -0.4382
    0.7100   -0.2198
    0.7200   -0.0076
    0.7300    0.1727
    0.7400    0.3050
    0.7500    0.3849
    0.7600    0.4195
    0.7700    0.4224
    0.7800    0.4081
    0.7900    0.3871
    0.8000    0.3633
    0.8100    0.3350
    0.8200    0.2983
    0.8300    0.2514
    0.8400    0.1979
    0.8500    0.1475
    0.8600    0.1135
    0.8700    0.1083
    0.8800    0.1386
    0.8900    0.2014
    0.9000    0.2846
    0.9100    0.3691
    0.9200    0.4350
    0.9300    0.4668
    0.9400    0.4578
    0.9500    0.4113
    0.9600    0.3379
    0.9700    0.2513
    0.9800    0.1631
    0.9900    0.0794
    1.0000    0.0000
 /\relax}\relax
\setplotsymbol ({\ninerm .})
\setdots <3pt> 
\Red{\relax % SVD
\plot
         0         0
    0.0100    0.4719
    0.0200    0.9368
    0.0300    1.3879
    0.0400    1.8186
    0.0500    2.2226
    0.0600    2.5944
    0.0700    2.9289
    0.0800    3.2216
    0.0900    3.4692
    0.1000    3.6688
    0.1100    3.8187
    0.1200    3.9180
    0.1300    3.9669
    0.1400    3.9661
    0.1500    3.9177
    0.1600    3.8242
    0.1700    3.6891
    0.1800    3.5166
    0.1900    3.3113
    0.2000    3.0785
    0.2100    2.8235
    0.2200    2.5523
    0.2300    2.2707
    0.2400    1.9845
    0.2500    1.6995
    0.2600    1.4212
    0.2700    1.1547
    0.2800    0.9047
    0.2900    0.6752
    0.3000    0.4698
    0.3100    0.2912
    0.3200    0.1416
    0.3300    0.0224
    0.3400   -0.0660
    0.3500   -0.1235
    0.3600   -0.1512
    0.3700   -0.1504
    0.3800   -0.1233
    0.3900   -0.0726
    0.4000   -0.0014
    0.4100    0.0867
    0.4200    0.1881
    0.4300    0.2987
    0.4400    0.4146
    0.4500    0.5317
    0.4600    0.6462
    0.4700    0.7545
    0.4800    0.8533
    0.4900    0.9395
    0.5000    1.0107
    0.5100    1.0650
    0.5200    1.1008
    0.5300    1.1172
    0.5400    1.1139
    0.5500    1.0912
    0.5600    1.0497
    0.5700    0.9907
    0.5800    0.9160
    0.5900    0.8277
    0.6000    0.7283
    0.6100    0.6204
    0.6200    0.5070
    0.6300    0.3911
    0.6400    0.2756
    0.6500    0.1637
    0.6600    0.0580
    0.6700   -0.0388
    0.6800   -0.1244
    0.6900   -0.1970
    0.7000   -0.2548
    0.7100   -0.2969
    0.7200   -0.3224
    0.7300   -0.3313
    0.7400   -0.3237
    0.7500   -0.3004
    0.7600   -0.2626
    0.7700   -0.2117
    0.7800   -0.1497
    0.7900   -0.0787
    0.8000   -0.0011
    0.8100    0.0805
    0.8200    0.1634
    0.8300    0.2451
    0.8400    0.3229
    0.8500    0.3944
    0.8600    0.4573
    0.8700    0.5096
    0.8800    0.5496
    0.8900    0.5761
    0.9000    0.5880
    0.9100    0.5849
    0.9200    0.5667
    0.9300    0.5338
    0.9400    0.4868
    0.9500    0.4271
    0.9600    0.3561
    0.9700    0.2757
    0.9800    0.1880
    0.9900    0.0953
    1.0000    0.0000
 /\relax}\relax
\setdashes <3pt>
\OliveGreen{\relax % double split frequency
\plot
         0         0
    0.0100    0.7115
    0.0200    1.3952
    0.0300    2.0226
    0.0400    2.5655
    0.0500    3.0001
    0.0600    3.3132
    0.0700    3.5071
    0.0800    3.6026
    0.0900    3.6351
    0.1000    3.6460
    0.1100    3.6701
    0.1200    3.7231
    0.1300    3.7952
    0.1400    3.8514
    0.1500    3.8428
    0.1600    3.7229
    0.1700    3.4666
    0.1800    3.0832
    0.1900    2.6202
    0.2000    2.1539
    0.2100    1.7709
    0.2200    1.5426
    0.2300    1.5041
    0.2400    1.6413
    0.2500    1.8923
    0.2600    2.1630
    0.2700    2.3518
    0.2800    2.3767
    0.2900    2.1968
    0.3000    1.8215
    0.3100    1.3050
    0.3200    0.7295
    0.3300    0.1813
    0.3400   -0.2714
    0.3500   -0.5930
    0.3600   -0.7829
    0.3700   -0.8672
    0.3800   -0.8825
    0.3900   -0.8597
    0.4000   -0.8119
    0.4100   -0.7314
    0.4200   -0.5960
    0.4300   -0.3819
    0.4400   -0.0775
    0.4500    0.3069
    0.4600    0.7371
    0.4700    1.1618
    0.4800    1.5247
    0.4900    1.7799
    0.5000    1.9033
    0.5100    1.8973
    0.5200    1.7876
    0.5300    1.6141
    0.5400    1.4180
    0.5500    1.2305
    0.5600    1.0667
    0.5700    0.9249
    0.5800    0.7924
    0.5900    0.6534
    0.6000    0.4974
    0.6100    0.3238
    0.6200    0.1426
    0.6300   -0.0293
    0.6400   -0.1738
    0.6500   -0.2773
    0.6600   -0.3344
    0.6700   -0.3480
    0.6800   -0.3272
    0.6900   -0.2837
    0.7000   -0.2278
    0.7100   -0.1671
    0.7200   -0.1062
    0.7300   -0.0479
    0.7400    0.0045
    0.7500    0.0470
    0.7600    0.0753
    0.7700    0.0868
    0.7800    0.0830
    0.7900    0.0701
    0.8000    0.0590
    0.8100    0.0619
    0.8200    0.0888
    0.8300    0.1435
    0.8400    0.2212
    0.8500    0.3102
    0.8600    0.3942
    0.8700    0.4582
    0.8800    0.4929
    0.8900    0.4976
    0.9000    0.4798
    0.9100    0.4522
    0.9200    0.4271
    0.9300    0.4116
    0.9400    0.4045
    0.9500    0.3967
    0.9600    0.3746
    0.9700    0.3255
    0.9800    0.2432
    0.9900    0.1306
    1.0000   -0.0000
 /\relax}\relax
\setdashpattern <3pt,2pt,1pt,2pt> 
\Blue{\relax % triple split frequency
\plot
         0         0
    0.0100    0.9375
    0.0200    1.8302
    0.0300    2.6339
    0.0400    3.3070
    0.0500    3.8162
    0.0600    4.1426
    0.0700    4.2877
    0.0800    4.2761
    0.0900    4.1514
    0.1000    3.9671
    0.1100    3.7726
    0.1200    3.6004
    0.1300    3.4572
    0.1400    3.3244
    0.1500    3.1671
    0.1600    2.9496
    0.1700    2.6538
    0.1800    2.2916
    0.1900    1.9084
    0.2000    1.5742
    0.2100    1.3652
    0.2200    1.3396
    0.2300    1.5170
    0.2400    1.8672
    0.2500    2.3125
    0.2600    2.7444
    0.2700    3.0498
    0.2800    3.1386
    0.2900    2.9658
    0.3000    2.5409
    0.3100    1.9227
    0.3200    1.2016
    0.3300    0.4752
    0.3400   -0.1747
    0.3500   -0.6974
    0.3600   -1.0775
    0.3700   -1.3272
    0.3800   -1.4716
    0.3900   -1.5326
    0.4000   -1.5180
    0.4100   -1.4186
    0.4200   -1.2149
    0.4300   -0.8889
    0.4400   -0.4384
    0.4500    0.1151
    0.4600    0.7242
    0.4700    1.3243
    0.4800    1.8465
    0.4900    2.2335
    0.5000    2.4523
    0.5100    2.4993
    0.5200    2.3977
    0.5300    2.1880
    0.5400    1.9153
    0.5500    1.6179
    0.5600    1.3198
    0.5700    1.0302
    0.5800    0.7474
    0.5900    0.4670
    0.6000    0.1884
    0.6100   -0.0807
    0.6200   -0.3244
    0.6300   -0.5223
    0.6400   -0.6560
    0.6500   -0.7142
    0.6600   -0.6963
    0.6700   -0.6122
    0.6800   -0.4793
    0.6900   -0.3182
    0.7000   -0.1485
    0.7100    0.0141
    0.7200    0.1579
    0.7300    0.2746
    0.7400    0.3580
    0.7500    0.4032
    0.7600    0.4073
    0.7700    0.3713
    0.7800    0.3015
    0.7900    0.2106
    0.8000    0.1164
    0.8100    0.0383
    0.8200   -0.0076
    0.8300   -0.0122
    0.8400    0.0241
    0.8500    0.0914
    0.8600    0.1741
    0.8700    0.2557
    0.8800    0.3240
    0.8900    0.3741
    0.9000    0.4086
    0.9100    0.4346
    0.9200    0.4594
    0.9300    0.4852
    0.9400    0.5075
    0.9500    0.5147
    0.9600    0.4924
    0.9700    0.4287
    0.9800    0.3195
    0.9900    0.1711
    1.0000   -0.0000
 /\relax}\relax
\endpicture
}

\section{Regularization strategies}\label{sec:regularization_strategies}

We have outlined several possible candidates for a {\it quasi-reversible\/}
regularizer for the backwards heat equation. 
In this section we provide an overall strategy and look at how individual
regularizing equations fit in.

The ultimate idea is to split the 
\Margin{Report C (18)}
problem into distinct frequency bands
and then combine to recover the value of $u_0(x)$.
This is feasible since the mapping $u_0\to g(x)$ is linear.
Such a strategy is of course not new for this problem but
the key is to recognize that each {\it quasi-reversible\/} component
that we have described will perform differently over each frequency band
and the problem is how to make the most effective combination.

Throughout we assume that the final value $g(x)$ has been measured subject
to a noise level, the magnitude of which we know.
Clearly, knowing further information such as some of the moments of the 
probability density function of the noise is desirable, but we will
simply assume that it has mean zero and a known maximum expected value, 
which leads to the deterministic noise bound
\begin{equation}\label{eqn:delta}
\|\uT-\uTdel\|_{L^2(\Omega)}\leq \delta
\end{equation}
with given $\delta>0$.

\subsection{Using a subdiffusion regularization}\label{subdiffusion_reg}

Perhaps the simplest possibility of regularization by a subdiffusion process
is to replace the time derivative
in \eqref{eqn:direct_prob} by one of fractional order $\partial_t^\alpha$, 
relying on the stability estimate from Lemma~\ref{lem:mlf-stability_est}.
However, there are two obstacles to this.

First, from \eqref{lem:subdiff_smoothness} there is still some smoothing
of the subdiffusion operator and  the actual final value at $t=T$
will lie in 
\revision{$\dH{2}$}.
\Margin{Report C (15)}
The subdiffusion equation still decays to zero for large $T$
and indeed the amplification factor $A_{frac}(k,\alpha)$
connecting the Fourier coefficients
\begin{equation}\label{eqn:subdiff-reg}
A_{frac}(k,\alpha) \langle \uT,\phi_k\rangle = \langle u(\cdot,0;\alpha),\phi_k\rangle
\end{equation}
is
\begin{equation}\label{eqn:amplification1}
A_{frac}(k,\alpha)=1/\revision{E_{\alpha,1}}(-\lambda_k T^\alpha) 
\end{equation}
and thus grows linearly in $\lambda_k$.

Thus we must form $\uTtd(x)$ the projection of the data $\uTdel(x)$
onto 
\revision{$\dH{2}$},
\Margin{Report C (15)}
in order that the amplification remain bounded for all 
$\lambda_k$. 
This is easily accomplished by some conventional regularization method.
Tikhonov regularization (or iterated versions of it) is not appropriate for this purpose, since due to its saturation at a finite smoothness level, it would not be able to optimally exploit the fact that we deal with infinitely smooth exact data $\uT$.
Thus we employ Landweber iteration for this purpose
\begin{equation}\label{eqn:LW_reg}
w^{(i+1)}=w^{(i)}-\mu (-\mathbb{L})^{-2}(w^{(i)}-\uTdel)\,, \qquad w^{(0)}=0\,,
\end{equation}
which by setting $w^{(i)}=-\mathbb{L} p^{(i)}$ can be interpreted as a gradient descent method for the minimization problem 
\[
\min_{p\in L^2(\Omega)}  \frac12 \|(-\mathbb{L})^{-1}p-\uTdel\|_{L^2(\Omega)}^2\,,
\]
and set 
\[
\uTtd=w^{(i_*)}
\]
for some appropriately chosen index $i_*$.
In practice we use the discrepancy principle for this purpose, while the convergence result in Lemma~\ref{lem:smoothing} employs an a priori choice of $i_*$. 
In \eqref{eqn:LW_reg}, the step size $\mu$ is assumed to satisfy
\[
\mu\in \left(0,\frac{1}{\|(-\mathbb{L})^{-2}\|_{L^2\to L^2}}\right]\,.
\]

Second, one has to check that none of the amplification coefficients
in \eqref{eqn:amplification1} exceeds that for the heat equation itself.
However, as shown in \cite{JinRundell:2015} for any value of $T$
there exists an $N$ such 
\revision{that} 
\Margin{Report C (19)}
all amplification factors $A_{frac}(k,\alpha)$
in \eqref{eqn:amplification1} exceed those of the 
parabolic problem
\[
A_{par}(k) := e^{\lambda_k T}
\]
for $k\leq N$.
In this sense the low frequencies are more difficult to recover
by means of the regularizing subdiffusion equation than by the parabolic
equation itself.
Of course for large values of $\lambda$ the situation reverses
as the Mittag-Leffler function decays only linearly for large argument.
Figure~\ref{fig:amplification_factors} shows the plots of $A_{frac}(k,\alpha)$
for $\alpha=0.5,\;0.9,\;1$.

\begin{figure}[ht]
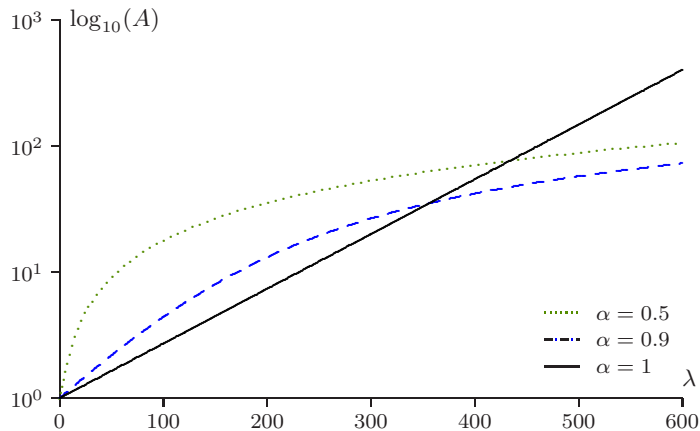
 %\label{fig:amplification_factors}
\hbox to \hsize{\hss\box\figureone\hss}
\caption{\small Amplification factor $A(\lambda_k,\alpha)$
\label{fig:amplification_factors}}
\end{figure}

\medskip
Thus we must modify the reconstruction scheme and there are several
possibilities of which we describe two.

\subsection{Adding a fractional time derivative to the diffusion equation}

The first of these is to take a multi-term fractional derivative
replacing \eqref{eqn:subdiffusion} by
\begin{equation}\label{eqn:epsilon-multi-term}
u_t + \epsilon\partial_t^\alpha u - \mathbb{L} u = 0,
\quad (x,t)\in\Omega\times(0,T).
\end{equation}
We must show that the solution $u_\epsilon$ to \eqref{eqn:epsilon-multi-term} 
and subject to the same initial condition converges in $L^2(\Omega)\times(0,T)$
to the solution of \eqref{eqn:direct_prob} as $\epsilon\to 0$.
Equation \eqref{eqn:epsilon-multi-term} is a specific case of the more general
multiterm fractional diffusion operator
\begin{equation}\label{eqn:multi-term}
\sum_{j=1}^M q_j\partial_t^{\alpha_j} u - \mathbb{L} u = 0,
\quad (x,t)\in\Omega\times(0,T).
\end{equation}
In this case the Mittag-Leffler function must be replaced by the multiterm
version, \cite{LuchkoGorenflo:1999,LiLiuYamamoto:2015}
with considerable additional complications
although the theory is now well-understood.
While one can use \eqref{eqn:multi-term} the complexity here arises from
the $2M$ coefficients $\{q_j,\alpha_j\}$ that would have to be determined
as part of the regularization process.
Thus we restrict our attention to equation \eqref{eqn:epsilon-multi-term}.

We can calculate the fundamental solution to 
\eqref{eqn:epsilon-multi-term} as follows.
First we consider the relaxation equation
\begin{equation}\label{eqn:two-term-relaxation}
w' + \epsilon\partial_t^\alpha w  + \lambda w  = 0,
\qquad w(0) = 1.
\end{equation}
Taking Laplace transforms $\{t\rightarrow s\}$  we obtain 
\begin{equation}\label{eqn:two-term-relaxation_lap}
\hat w(s) = \frac{1 + \epsilon s^{\alpha-1}}{s + \epsilon s^\alpha + \lambda}
\end{equation}
%The change of variables $\{s \to \lambda z\}$ gives
%\begin{equation}\label{eqn:two-term-relaxation_lap}
%\hat w(z) = \frac{1}{\lambda}\frac{1 + b z^{\alpha-1}}{z + b z^\alpha + 1},
%\qquad b = \epsilon\lambda^{\alpha-1}
%\end{equation}
%and allows us to reduce the number of constants.
Now the imaginary part of $s + 
\revision{\epsilon} 
s^\alpha + \lambda$  does not vanish if $s$ is
\Margin{Report C (20)}
not real and positive so that the inversion of the Laplace transform can be
accomplished by deforming the original vertical Bromwich path into a
Hankel path $\mathcal{H}_\eta$
surrounding the branch cut on the negative real axis and a small circle
of radius $\eta$ centre the origin.
See, Chapter~4 of \cite{GorenfloMainardi:1997}.
This gives 
\begin{equation*}
w(t) = \frac{1}{2\pi i}\int_{\mathcal{H}_\eta}
e^{st} \frac{1 + \epsilon s^{\alpha-1}}{s + \epsilon s^\alpha + \lambda}\,ds
\end{equation*}
and as $\eta\to 0$ we obtain $H(r) = -\frac{1}{\pi}
Im\Bigl\{ \frac{1 + \epsilon s^{\alpha-1}}{s + \epsilon s^\alpha + \lambda}\Bigr\}\Big|_{s=re^{i\pi}}$.
Multiplying both numerator and denominator by the complex conjugate of
$s + \epsilon s^\alpha +\lambda$ gives
\begin{equation*}
\begin{aligned}
\Bigl\{ \frac{1 + \epsilon s^{\alpha-1}}{s + \epsilon s^\alpha + \lambda}\Bigr\}\Big|_{s=re^{i\pi}} &= 
\frac{1 + \epsilon r^{\alpha-1}e^{(\alpha-1)\pi i}}{(\lambda-r) + \epsilon r^\alpha e^{\alpha\pi i}} \\
&= \frac{\bigl(1 + \epsilon r^{\alpha-1}e^{(\alpha-1)\pi i}\bigr)
\bigl( (\lambda-r) + \epsilon r^\alpha e^{-\alpha\pi i}\bigr)}
{(\lambda-r)^2 + 2 \epsilon(\lambda-r) r^\alpha\cos(\alpha\pi) + \epsilon^2r^{2\alpha}} \\
&= \frac{
(\lambda-r) -\epsilon^2 r^{2\alpha-1} + 2 \epsilon r^\alpha\cos(\alpha\pi)
-\epsilon\lambda r^{\alpha-1}e^{\alpha\pi i} }
{(\lambda-r)^2 + 2 \epsilon(\lambda-r) r^\alpha\cos(\alpha\pi) + \epsilon^2r^{2\alpha}}\,. \\
\end{aligned}
\end{equation*}
The first three terms in the numerator are real so
taking the imaginary part yields
\begin{equation*}
Im\Bigl\{ \frac{1 + \epsilon s^{\alpha-1}}{s + \epsilon s^\alpha + \lambda}\Bigr\}\Big|_{s=re^{i\pi}}
= 
-\frac{\epsilon\lambda r^{\alpha-1}\sin(\alpha\pi)}
{(\lambda-r)^2 + 2\epsilon(\lambda-r) r^\alpha\cos(\alpha\pi) + \epsilon^2r^{2\alpha}}\,.
\end{equation*}
Thus
\begin{equation}\label{eqn:two-term-relaxation_sol}
w(t;\alpha,\epsilon,\lambda) = \int_0^\infty e^{-rt} H(r;\alpha,\epsilon,\lambda)\,dr
\end{equation}
where the spectral function $H$ satisfies
\begin{equation}\label{eqn:two-term-spec-func}
H(r;\alpha,\epsilon,\lambda) = \frac{\epsilon\lambda}{\pi}\frac{r^{\alpha-1}\sin(\alpha\pi)}
{(\lambda-r)^2 + \epsilon^2 r^{2\alpha} + 2(\lambda-r)\epsilon r^\alpha\cos(\alpha\pi)}\,.
\end{equation}
For $\epsilon>0$ and $0<\alpha<1$, $H$ is strictly positive showing that the
fundamental solution of the initial value problem
\eqref{eqn:two-term-relaxation} is also a completely monotone function.

We can use the 
\Margin{Report C (21)}
above to obtain the solution representation to equation
\eqref{eqn:multi-term}
\begin{equation}\label{eqn:two-term-solution}
u(x,t) = \sum_{k=1}^\infty \langle u_0,\phi_k\rangle
w(t,\alpha,\epsilon,\lambda_k) \phi_k(x)
\end{equation}
which becomes a potential regularizer for the backwards heat problem.

Tauberian results for the Laplace transform $\hat f(s)$ of a sufficiently
smooth function $f(\tau)$ show that
$\lim_{\tau\to\infty} f(\tau) =\lim_{s\to 0^+} \hat f(s)$.
If in \eqref{eqn:two-term-spec-func} we make the change of variables 
$r\to \lambda \rho$, $H(r,\cdot) \to \tilde H(\rho,\cdot)$
then consider $\lim_{\rho\to 0^+}\tilde H(\rho)$.
We obtain
$\tilde H(\rho) \sim{}
\frac{\epsilon\sin(\alpha\pi)}{\pi}\lambda^{\alpha-1}\rho^{\alpha-1}$.
Now hold $t=T$ fixed in \eqref{eqn:two-term-relaxation_sol} and we see that
\begin{equation}\label{eqn:w-asymptotic}
\begin{aligned}
w(T;\alpha,\epsilon,\lambda) &\sim \epsilon \frac{\sin(\alpha\pi)}{\pi}
\lambda^{\alpha-1}
\frac{(\lambda T)^{-\alpha}}{\Gamma(1-\alpha)}\\
&\sim C(\alpha)\epsilon \frac{T^{-\alpha}}{\lambda}\qquad 
\mbox{as} \;\ \lambda\to\infty\,.\\
\end{aligned}
\end{equation}
This indicates that the combined asymptotic behaviour of the two fractional
terms in \eqref{eqn:epsilon-multi-term}
defers to that of the lower fractional index, here $\alpha$.
This is in fact known even for the general multiterm case
\eqref{eqn:multi-term}, see \cite{LiLiuYamamoto:2015} .

Thus given we have made the prior regularization of the data by mapping it
into 
\revision{$\dH{2}$},
\Margin{Report C (15)}
equation \eqref{eqn:epsilon-multi-term} will be a
regularization method for the diffusion equation for $\epsilon>0$.
The question then becomes how effective it performs.

The answer is, quite poorly and \eqref{eqn:w-asymptotic} shows why.
If $\epsilon$ is very small then the asymptotic decay of the singular
values of the map $F:u_0\to g$ is again too great and the combination
of the two derivatives is insufficent to control the high frequencies.
This is particularly true the closer $\alpha$ is to unity.
On the other hand, for lower frequency values of $\lambda$, the fractional
derivative term plays a considerable role and the greater with increasing
$\epsilon$ and decreasing $\alpha$.
Thus one is forced to select regularizing constants $\alpha$ and $\epsilon$
that will either decrease fidelity at the lower frequencies or fail to
adequately control the high frequencies.

There is a partial solution to the above situation by taking instead of
\eqref{eqn:epsilon-multi-term} the balanced version
\begin{equation}\label{eqn:epsilon-multi-term-balanced}
(1-\epsilon)u_t + \epsilon\partial_t^\alpha u - \mathbb{L} u = 0,
\quad (x,t)\in\Omega\times(0,T).
\end{equation}
This ameliorates to some degree the concern at lower frequencies
but has little effect at the higher frequencies.

We will not dwell on this version or its above modification
as there are superior alternatives as will see in the next subsection.
However, the lessons learned in the previous two versions shows the way
to achieve both goals; low frequency fidelity and high frequency control.

\subsection{Using split-frequencies}\label{sec:split_frequency_case}

Another  alternative is to modify the reconstruction scheme as follows:
for frequencies $k\leq K$ we recover the Fourier coefficients of $u_0$
by simply inverting the parabolic equation 
\revision{as} 
\Margin{Report C (22)}
is, using
$A_{par}(k)$ and for frequencies $k>K$ we use $A_{frac}(k,\alpha)$ defined in
\eqref{eqn:amplification1}, i.e.,
\begin{equation*}
\langle u_0^\delta(\cdot;\alpha,K),\phi_k\rangle=\begin{cases}
A_{par}(k) \langle \uTdel,\phi_k\rangle &\mbox{ for }k\leq K\\
A_{frac}(k,\alpha) \langle \uTtd,\phi_k\rangle &\mbox{ for }k\geq K+1
\end{cases}
\end{equation*}

The question remains how to pick $K$ and $\alpha$.
For this purpose, we use the discrepancy principle: in both cases using the 
assumption on the noise level in $\uTdel$ and its smoothed version $\uTtd$, respectively.
More precisely, we first of all apply the discrepancy principle to find $K$, 
which -- according to existing results on truncated singular value expansion,
see for example, \cite{EnglHankeNeubauer:1996} -- gives an order-optimal
(with respect to the $L^2$ norm) low frequency reconstruction
$u_{0,lf}^\delta(\cdot,K)$.
Then we aim at improving this reconstruction by adding higher frequency
components that cannot be recovered by the pure backwards heat equation,
which is enabled by a subdiffusion regularization acting only on
these frequencies.
The exponent $\alpha$ acts as a regularization parameter that is again chosen
by the discrepancy principle.
We refer to Section~\ref{sec:conv_split} for details on this procedure.

This works remarkably well for a wide range of functions $u_0$.
It works less well if the initial value contains a significant amount of
mid-level frequencies as well as those of low and high order.
In this case the {\it split-frequency\/} idea can be adapted as follows.

As above we determine the value of $K_1=K$ using the discrepancy principle;
this is the largest frequency mode that can be inverted using the parabolic
amplification $A_{par}(k)$ given the noise level $\delta$.
We then  estimate $K_2$ which will be the boundary between the mid and high
frequencies. With this estimate we again use the discrepancy principle to
determine the optimal $\alpha_2$ where we will use $A_{frac}(k,\alpha_2)$
for those frequencies above $K_2$ to recover the Fourier coefficients of $u_0$
for $k>K_2$.
In practice we set a maximum frequency value $K_\ell$.
By taking various values of $K_2$ we perform the above to obtain the
overall best fit in the above scheme.

To regularize the mid frequency range we again use the subdiffusion equation
with $\alpha=\alpha_1$ and choose this parameter by again using
the discrepancy principle.
%that value of $\alpha$ that gives the best fit of the
%amplification values $A_{frac}(k,\alpha)$ 
%between the points $(K_1,A_{par}(K_1))$ and 
%$(K_2,A_{frac}(K_2,\alpha_2))$.
I.e., we set 
\begin{equation*}
\langle u_0^\delta(\cdot;\alpha_1,\alpha_2,K_1,K_2),\phi_k\rangle=\begin{cases}
A_{par}(k) \langle \uTdel,\phi_k\rangle &\mbox{ for }k\leq K_1\\
A_{frac}(k,\alpha_1) \langle \uTtd,\phi_k\rangle &\mbox{ for }K_1+1\leq k\leq K_2\\
A_{frac}(k,\alpha_2) \langle \uTtd,\phi_k\rangle &\mbox{ for }K_2+1\leq k
\revision{\leq} 
K_\ell\\
0 &\mbox{ for }k\geq K_\ell+1
\end{cases}
\end{equation*}
\Margin{Report C (23)}

Thus we solve the backwards diffusion equation in three frequency ranges
$(1,K_1)$, $(K_1,K_2)$ and $(K_2,K_\ell)$ using 
\eqref{eqn:amplification1} with $\alpha=1$, $\alpha=\alpha_1$ and $\alpha_2$.

We remark that this process could be extended whereby we split the frequencies
into $[1,K_1]$,
$(K_1,K_2]$, $\ldots$ $(
\revision{K_{\ell-1}} 
,K_\ell)$ and use the discrepancy principle
\Margin{Report C (24)}
to obtain a sequence of values $\alpha=1,\; \alpha_1,\;\dots\; \alpha_m$.
We found that in general the values of $\alpha_i$ decreased
with increasing frequency.  This is to be expected; although the asymptotic
order of $E_{\alpha,1}$ is the same for all $\alpha<1$ the
associated constant is not; larger values of $\alpha$ correspond a larger
constant and give higher fidelity with the heat equation as Lemmas~\ref{lem:mlf-stability_est} and \ref{lem:rateE} show.

\subsection{Using space fractional regularization}\label{sec:space_frac_reg}

The idea of the previous subsection can be carried over to fractional operators
in space. Once again we look for frequency cut-off values and we illustrate
with 3 levels, so we have $K_1$ and $K_2$ as above.
The regularizing equation will be the $\beta-$pseudoparabolic as in
\eqref{eqn:beta-pseudoparabolic}.
For the lowest frequency interval we choose $\epsilon=0$ so that we
are simply again inverting the parabolic.
for the mid range $k\in (K_1,K_2]$ we take $\beta=0.5$ and for the high
frequencies $k\in (K_2,K_\ell]$ we use $\beta=1$ so that we have the usual
pseudoparabolic equation.
\begin{equation*}
\langle u_0^\delta(\cdot;\beta_1,\beta_2,\epsilon_1,\epsilon_2,K_1,K_2),\phi_k\rangle=\begin{cases}
A_{par}(k) \langle \uTdel,\phi_k\rangle &\mbox{ for }k\leq K_1\\
A_{\beta ps}(k,\beta_1,\epsilon_1) \langle \uTdel,\phi_k\rangle &\mbox{ for }K_1+1\leq k\leq K_2\\
A_{\beta ps}(k,\beta_2,\epsilon_2) \langle \uTdel,\phi_k\rangle &\mbox{ for }K_2+1\leq k\geq K_\ell\\
0 &\mbox{ for }k\geq K_\ell+1
\end{cases}
\end{equation*}
with 
\[
A_{\beta ps}(k,\beta,\epsilon)=\exp\left(\frac{\lambda_n}{1 + \epsilon\lambda_n^\beta} T\right)
\]

This is a regularizer in $L^2(\Omega)$ and so there is no need
for the preliminary mapping of the data into 
\revision{$\dH{2}$}.
\Margin{Report C (15)}
In each interval we compute the value of $\epsilon$ from the discrepancy
principle and invert the corresponding amplification factors to recover
$u_0$ from \eqref{eqn:beta-pseudoparabolic_sol}.

Variations are possible and in particular reserving the $\beta-$pseudoparabolic
equation for mid-range frequencies and using a subdiffusion equation for the
regularization of the high frequencies as in
Section~\ref{sec:split_frequency_case}.

\section{Reconstructions}\label{sec:reconstructions}

In this section we will show a few illustrative examples 
for $\mathbb{L}=\triangle$ in one space dimension based on
inversion using the split-frequency model incorporating fractional diffusion
operators since overall these gave the best reconstructions of
the initial data.
Comparisons between different methods is always subject to the possibility
that, given almost any inversion method, one can construct an initial function
$u_0$ that will reconstruct well for that method.
As noted in the previous section we did find \eqref{eqn:epsilon-multi-term}
or its modification \eqref{eqn:epsilon-multi-term-balanced} to be competitive.
The $\beta$-pseudoparabolic equation \eqref{eqn:beta-pseudoparabolic}
when used only for mid range frequencies
and with a subdiffusion operator for the high frequencies can give
comparable results to the 
%%% BK 2018-10-20 triple-split fractional.
double-split fractional. %%% BK 2018-10-20 
However, the difficulty lies in determining the pair of constants
$\beta$ and $\epsilon$.  It turns out that the optimal reconstruction using
the discrepancy principle is not sensitive to $\beta$ in the range 
$\frac{1}{2}\leq\beta\leq\frac{3}{4}$ or even beyond, but it is sensitive
to the choice of $\epsilon$.

We have taken two noise levels $\delta$ on the data $\uT(x)=u(x,T)$
at which to show recovery of $u_0$;
$\delta = 1\%$ and $\delta = 0.1\%$.
These may seem a low noise level but one must understand the high degree
of ill-posedness of the problem and the fact that high Fourier modes
very quickly become damped beyond any reasonable measurement level.
One is reminded here of the quote by Lanczos,
{\it ``lack of information cannot be remedied by any mathematical trickery''}

We have also taken the final time $T$ to be $T=0.02$.
As noted in the introduction concerning equation scaling we in reality
have the combination $Kt$ for the parabolic equation in
\eqref{eqn:direct_prob} where $K$ is typically quite small.
Since we have set $K=1$ here our choice of final time is actually rather long.
A decrease in our $T$ by a factor of ten would result in much superior
reconstructions for the same level of data noise.

Our first example is of a smooth function except for a discontinuity in
its derivative near the rightmost endpoint so that recovery of relatively high
frequency information is required in order to resolve this feature.
Figure~\ref{fig:reconstructions1} shows the actual function $u_0$ together
with reconstructions from both the single split-frequency method
%%% BK 2018-10-20 and with a triple splitting.
and with a double splitting. %%% BK 2018-10-20
One sees the slight but significant resolution increase for the latter method.
In the single split method the discrepancy principle chose
%%% BK 2018-10-20 $K_1=4$ and $\alpha=0.92$; for the triple split
$K_1=4$ and $\alpha=0.92$; for the double split %%% BK 2018-10-20
$K_1=4$, $K_2=10$ and $\alpha_1=0.999$, $\alpha_2=0.92$.

It is worth noting that if we had to increase the noise to $\delta=0.01$
then not only would the reconstruction degrade but would do so more clearly
near the singularity in the derivative. However, of more interest is the  fact
that the reconstructions from both methods would be identical.
The discrepancy principle detects there is insufficient information for
a second splitting, so that $K_2$ is taken to be equal to $K_1$.

\begin{figure}[ht]
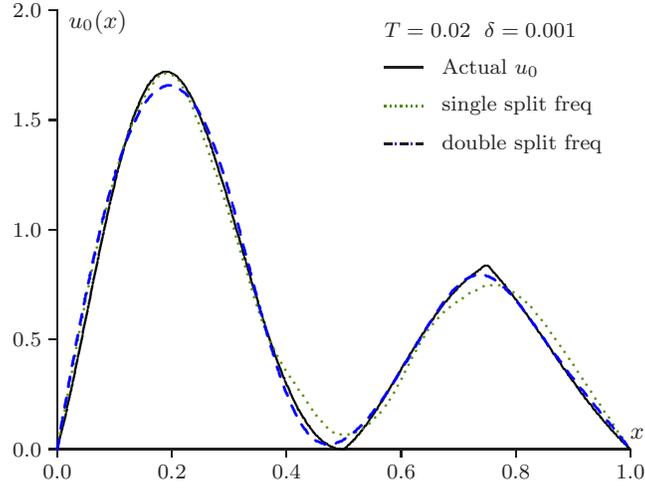

\hbox to \hsize{\hss\box\figuretwo\hss}
%%% BK 2018-10-20 \caption{\small Reconstructions from 2 and 3-split frequency method.}
\caption{\small Reconstructions from single and double split frequency method.} %%% BK 2018-10-20
\label{fig:reconstructions1}
\end{figure}

As a second example we chose a function made up by setting its Fourier
coefficients and choosing these so that the first 7 are all around unity
as are those in the range from 10 to 15.
The reconstructions shown in Figure~\ref{fig:reconstructions2} are using
the triple interval split frequency and as a comparison a truncated
singular value decomposition from the parabolic equation
with the parameters chosen again by the discrepancy principle.
As the figure shows, the {\sc svd} reconstruction can only approximate
the low frequency information in the initial state whereas the split-frequency
model manages to capture significantly more.
Note that the reconstruction here is better at those places where
$u_0$ has larger magnitude.

If we had to reduce the noise level to $\delta=0.001$ or reduce
the value of $T$, this difference would have been even more apparent.
If we included the single split frequency reconstruction it would show
a significant improvement over the {\sc svd} but clearly poorer than
the split into three bands.
Indeed, 
%%% BK 2018-10-20 this particular $u_0$ would benefit from a further splitting
a similar instance of $u_0$ benefits from a further splitting %%% BK 2018-10-20
of frequency bands beyond the three level,
see Figure~\ref{fig:reconstructions3}. %%% BK 2018-10-20

\bigskip
\begin{figure}[ht]
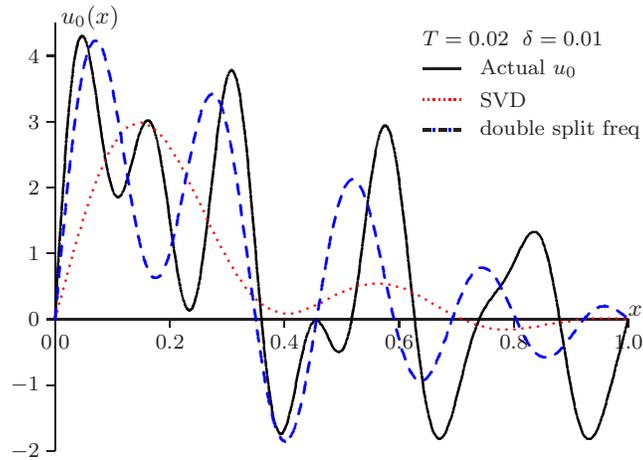

\hbox to \hsize{\hss\box\figurethree\hss}
\caption{\small Reconstructions from SVD and 
%%% BK 2018-10-20 3-split
double split %%% BK 2018-10-20 
frequency method.}
\label{fig:reconstructions2}
\end{figure}

\bigskip
\begin{figure}[ht]
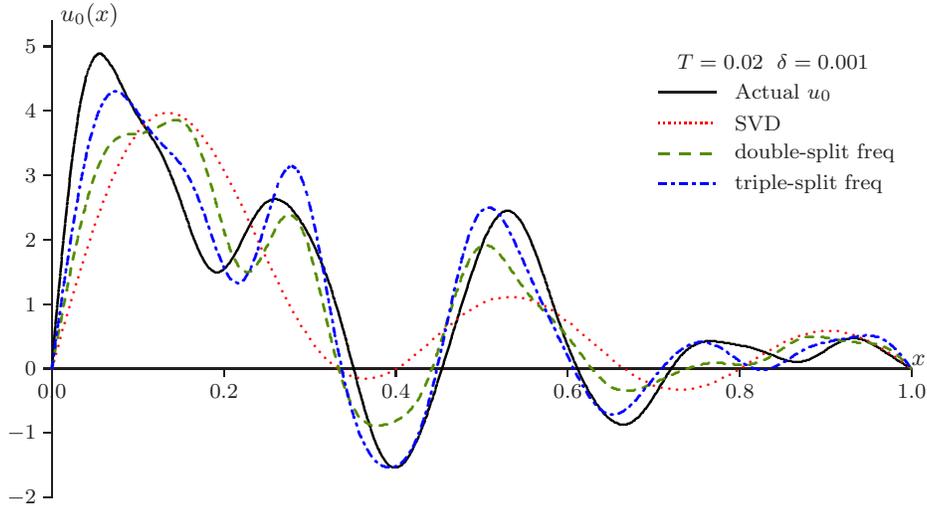

\hbox to \hsize{\hss\box\figurefour\hss}
\caption{\small Reconstructions from 
%%% BK 2018-10-20 1, 2 and 3-split frequency method.}
SVD as well as double and triple split frequency method.}%%% BK 2018-10-20
%%% BK 2018-10-20 \label{fig:reconstructions2}
\label{fig:reconstructions3} %%% BK 2018-10-20
\end{figure}

\begin{remark}
In higher space dimensions, for special geometries, the eigenfunctions and -values of $-\triangle$ can be computed analytically and everything would proceed as described.
For more complex geometries one can rely on a numerical solver
and there are many possibilities here, that even would include the
multi-term fractional order derivative and more general elliptic operators $\mathbb{L}$, see \cite{JinLazarovLiuZhou:2015}
and references within, or for the combined subdiffusion and
fractional operator in space \eqref{eqn:alpha-beta-pseudoparabolic},
\cite{BonitoLeiPasciak:2017}.
The linearity of the problem with respect to $u_0$ would still allow a
decomposition into frequency bands as described for the case in this section.
Note that only a very limited number of eigenfunctions and -values is required for the reconstruction, since the high frequency part can be tackled directly via the fractional PDE of temporal order $\alpha_2$.
\end{remark}

\section{Convergence analysis}\label{sec:conv}

The goal of this section is to provide a convergence analysis in the sense of regularization, i.e., as the noise level $\delta$ tends to zero, for the split frequency approach from Section~\ref{sec:split_frequency_case}. 
\revision{Here the order of time differentiation $\alpha$ acts as a regularization parameter}
\Margin{Report C (25)}
As a preliminary result we will show convergence of the subdiffusion regularization \eqref{eqn:subdiff-reg}, \eqref{eqn:amplification1}, both with an a priori choice of $\alpha$ and with the discrepancy principle for choosing $\alpha$.

\subsection{Simple subdiffusion regularization}
We first of all consider approximate reconstruction of $u_0$ as
\begin{equation}\label{eqn:u0bsd}
u_0^\delta(\cdot;\alpha) = \sum_{k=1}^\infty 
\revision{\frac{1}{E_{\alpha,1}(-\lambda_k T^\alpha)}}
 \tilde{c}_k^\delta \phi_k
\end{equation}
\Margin{Report C (26)}
that is, the initial data of the solution $u$ to the subdiffusion equation of
order $\alpha$ in equation \eqref{eqn:subdiffusion} with final data $\uTtd$
an  
\revision{$\dH{2}$}
\Margin{Report C (15)}
smoothed version of the given final data and where
 $\tilde{c}_k^\delta$ are its Fourier coefficients.
We will show that $u_0^\delta(\cdot;\alpha)\to u_0$ in $L^2(\Omega)$ as $\delta\to0$ provided $\alpha=\alpha(\delta)$ is appropriately chosen.
\revision{
Note that the relation $c_k = e^{-\lambda_k T} a_k$ holds, which corresponds to the identity $g = \exp(\mathbb{L}T) u_0$.}
\Margin{Report C (27), (36)}

Since most of the proofs here will be set in Fourier space,
we recall the notation
\[
u_0=\sum_{n=1}^\infty a_n \phi_n\,,\quad
\uT=\sum_{n=1}^\infty c_n \phi_n\,,\quad
\uTdel=\sum_{n=1}^\infty c_n^\delta \phi_n\,,\quad
\uTtd=\sum_{n=1}^\infty \tilde{c}_n^\delta \phi_n\,,
\]
where $\{\phi_n\}$ are eigenfunctions of $-\mathbb{L}$ on $\Omega$
with homogeneous Dirichlet boundary conditions and $\{\lambda_n\}$
are the corresponding eigenvalues enumerated according to value.

As a preliminary step, we provide a result on 
\revision{$\dH{2}$}
\Margin{Report C (15)}
(more generally, 
\revision{$\dH{2s}$})
\Margin{Report C (15)}
smoothing of $L^2$ data, which, in view of $H^2-L^2$-wellposedness of time fractional backwards diffusion, is obviously a crucial ingredient of regularization by backwards subdiffusion.

Recall the above mentioned Landweber iteration for defining $\uTtd=w^{(i_*)}$
\begin{equation}\label{eqn:LW}
w^{(i+1)}=w^{(i)}-A(w^{(i)}-\uTdel)\,, \qquad w^{(0)}=0\,,
\end{equation}
where  
\begin{equation}\label{eqn:Asmoothing}
A=\mu (-\mathbb{L})^{-2s}
\end{equation}
with $s\geq1$ and $\mu>0$ chosen so that $\|A\|_{L^2\to L^2}\leq 1$.

\begin{lemma}\label{lem:smoothing}
A choice of 
\begin{equation}\label{eqn:istar}
i_*\sim T^{-2} \log\left(\frac{\|u_0\|_{L^2(\Omega)}}{\delta}\right)
\end{equation}
yields
\begin{equation}\label{eqn:delta_smoothed}
\|\uT-\uTtd\|_{L^2(\Omega)}\leq C_1 \delta\,, \quad
\|\mathbb{L}^s(\uT-\uTtd)\|_{L^2(\Omega)}\leq C_2 \, T^{-1} \, \delta\, \sqrt{\log\left(\frac{\|u_0\|_{L^2(\Omega)}}{\delta}\right)} =:\tilde{\delta}
\end{equation}
for some $C_1,C_2>0$ independent of $T$ and $\delta$. 
\end{lemma}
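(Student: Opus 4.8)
The plan is to work entirely in the eigenbasis $\{\phi_n\}$, where the smoothing operator $A=\mu(-\mathbb{L})^{-2s}$ from \eqref{eqn:Asmoothing} acts diagonally with eigenvalues $\sigma_n:=\mu\lambda_n^{-2s}\in(0,1]$. Solving the linear recursion \eqref{eqn:LW} mode by mode gives the closed form $\tilde c_n^\delta=\langle\uTtd,\phi_n\rangle=\bigl(1-(1-\sigma_n)^{i_*}\bigr)c_n^\delta$. Writing $F(\sigma)=1-(1-\sigma)^{i_*}$ and $R(\sigma)=(1-\sigma)^{i_*}=1-F(\sigma)$ for the filter and residual functions, I would split the error coefficientwise into a propagated-noise part and an approximation (bias) part,
\[
c_n-\tilde c_n^\delta = F(\sigma_n)\,(c_n-c_n^\delta) + R(\sigma_n)\,c_n\,,
\]
and estimate the two contributions to each of the norms in \eqref{eqn:delta_smoothed} separately, using $0\le F(\sigma)\le1$ together with the analytic smoothness of $\uT$ encoded in $c_n=e^{-\lambda_n T}a_n$.

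The noise contributions are the part that pins down $\tilde\delta$. For the $L^2$ bound, $|F(\sigma_n)|\le1$ immediately yields $\sum_n F(\sigma_n)^2|c_n-c_n^\delta|^2\le\|\uT-\uTdel\|_{L^2(\Omega)}^2\le\delta^2$, which gives the first inequality in \eqref{eqn:delta_smoothed} once the bias is controlled. For the $\dH{2s}$-seminorm the decisive estimate is the multiplier bound
\[
\sup_{\lambda\ge\lambda_1}\lambda^{2s}\bigl(1-(1-\mu\lambda^{-2s})^{i_*}\bigr)^2\le\mu\,i_*\,,
\]
which I would obtain from $1-(1-x)^{i_*}\le\min\{1,i_*x\}$: with $x=\mu\lambda^{-2s}$ the weight becomes $\mu\min\{1/x,\,i_*^2x\}$, maximised at $x\sim 1/i_*$. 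This controls the noise part of $\|\mathbb{L}^s(\uT-\uTtd)\|_{L^2(\Omega)}$ by $\sqrt{\mu\,i_*}\,\delta$, and substituting the choice \eqref{eqn:istar}, $i_*\sim T^{-2}\log(\|u_0\|_{L^2(\Omega)}/\delta)$, reproduces exactly $\tilde\delta=C_2\,T^{-1}\delta\sqrt{\log(\|u_0\|_{L^2(\Omega)}/\delta)}$ with $C_2$ independent of $T$ and $\delta$.

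It remains to bound the bias sums $\sum_n R(\sigma_n)^2|c_n|^2$ and $\sum_n \lambda_n^{2s}R(\sigma_n)^2|c_n|^2$, and this is where the analyticity of the exact final data is essential. Writing $|c_n|^2=e^{-2\lambda_n T}|a_n|^2$ and $R(\sigma_n)=(1-\mu\lambda_n^{-2s})^{i_*}\le e^{-i_*\mu\lambda_n^{-2s}}$, both sums are dominated by $\|u_0\|_{L^2(\Omega)}^2$ times a supremum of the form $\sup_{\lambda\ge\lambda_1}\lambda^{2r}e^{-2\lambda T-2i_*\mu\lambda^{-2s}}$ (with $r=0$ and $r=s$ respectively). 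I would estimate this supremum by a high/low frequency split: above a cut-off $\Lambda$ the factor $e^{-2\lambda T}$ is exponentially small, while below $\Lambda$ the residual $e^{-2i_*\mu\lambda^{-2s}}$ is exponentially small, so that balancing the two exponents (equivalently, minimising $\lambda T+i_*\mu\lambda^{-2s}$) drives the bias below $\delta^2$ once $i_*$ is logarithmically large relative to $T$. The main obstacle is precisely this simultaneous balancing: $i_*$ must be large enough that the analytic tail of $\uT$ is resolved (so the bias is negligible) yet small enough that the $\sqrt{i_*}$ amplification in the $\dH{2s}$ norm stays at the level $\tilde\delta$. Verifying that the single choice \eqref{eqn:istar} sits in this window, and tracking that $C_1,C_2$ remain independent of $T$ and $\delta$, is the delicate part of the argument.
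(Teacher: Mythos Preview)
Your proposal follows the same overall scheme as the paper's proof: both work in the eigenbasis, decompose the error coefficientwise into a propagated-noise term $F(\sigma_n)(c_n-c_n^\delta)$ and an approximation (bias) term $R(\sigma_n)c_n$, and bound the noise contribution in $\dH{2s}$ by the identical $\sqrt{i_*}$ multiplier estimate (the paper writes this as $\chi(\sigma)=\sigma^{-1}(1-(1-\sigma^2)^i)\le\sqrt{i}$, which is exactly your $1-(1-x)^{i_*}\le\min\{1,i_*x\}$ squared).

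The genuine difference lies in the treatment of the bias. The paper keeps the factor $(1-\sigma^2)^i$ intact, differentiates $\psi(\sigma)=(1-\sigma^2)^i\sigma^{-1}\exp(-T\sigma^{-1})$ directly, and then devotes most of the argument to solving the resulting cubic $(2i-1)\sigma^3+T\sigma^2+\sigma-T=0$ via Cardano's formula so as to bound the critical point from below by $\underline{\sigma}_*\gtrsim T/\sqrt[3]{iT^2}$; only after this does it invoke the monotonicity of the simplified function $\tilde\psi(\sigma)=\sigma^{-1}\exp(-T\sigma^{-1})$ to conclude. You instead replace $(1-x)^{i_*}$ by $e^{-i_*x}$ at the outset and balance $\lambda T$ against $i_*\mu\lambda^{-2s}$ through a high/low frequency split; this locates the same scale $\lambda_*\sim(i_*\mu/T)^{1/(2s+1)}$ and hence the same exponent $\sim(i_*T^2)^{1/(2s+1)}$ in the bias, without the algebraic detour. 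Both routes feed into the same balancing between the bias and $\sqrt{i_*}\,\delta$ and therefore into the same asymptotic choice \eqref{eqn:istar}. Your argument is shorter and more transparent; the paper's Cardano computation yields slightly more explicit constants but is not needed for the asymptotic statement of the lemma.
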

The proof can be found in the Appendix.
Existing results on convergence of Landweber iteration do not apply here
 due to the infinite order smoothness of the function we are smoothing;
 more precisely, the fact that it satisfies a source condition with an exponentially decaying index function. 

We are now in the position to prove convergence of
$u_0^\delta(\cdot;\alpha)$ in the sense of a regularization method,
first of all with an a priori choice of $\alpha$.

\begin{theorem}\label{prop:reg_apriori}
Let $u_0\in$ 
\revision{$\dH{2(1+1/p)}$})
\Margin{Report C (15)}
for some $p\in(1,\infty)$, and let
$u_0^\delta(\cdot;\alpha)$ be defined by \eqref{eqn:u0bsd} with $\uTtd=w^{(i_*)}$ according to \eqref{eqn:LW}, \eqref{eqn:Asmoothing}, \eqref{eqn:istar}, with 
\revision{$s\geq 1+\tfrac{1}{p}$,} 
and
assume that $\alpha=\alpha(\tilde{\delta})$ is chosen such that
\begin{equation}\label{eqn:apriorialpha}
\alpha(\tilde{\delta})\nearrow 1 \mbox{ and } \frac{\tilde{\delta}}{1-\alpha(\tilde{\delta})}\to0
\,, \quad \mbox{ as } \tilde{\delta}\to0\,, 
\end{equation}

Then 
\[
\|u_0^\delta(\cdot;\alpha(\delta))-u_0\|_{L^2(\Omega)}\to0 
\,, \quad \mbox{ as } \tilde{\delta}\to0\,,
\]
%That is, backwards time fractional diffusion is a regularization method.
\end{theorem}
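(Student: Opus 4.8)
The plan is to split the total reconstruction error through the intermediate quantity $u_0^{\mathrm{ex}}(\cdot;\alpha):=\sum_k \frac{1}{E_{\alpha,1}(-\lambda_k T^\alpha)}\,c_k\,\phi_k$, the reconstruction one would obtain from the \emph{exact} final data $\uT$, and to write
\[
u_0^\delta(\cdot;\alpha)-u_0=\bigl(u_0^\delta(\cdot;\alpha)-u_0^{\mathrm{ex}}(\cdot;\alpha)\bigr)+\bigl(u_0^{\mathrm{ex}}(\cdot;\alpha)-u_0\bigr).
\]
I would then bound the two terms separately and show each tends to $0$ as $\tilde\delta\to0$ (equivalently $\delta\to0$). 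The two pieces respond oppositely to $\alpha$: the first (propagation of the data error) is tamed when $\alpha$ stays away from $1$, the second (approximation error) when $\alpha\to1$, and the a priori rule \eqref{eqn:apriorialpha} is precisely the balance that reconciles them.

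For the propagation term the $k$-th coefficient is $\frac{1}{E_{\alpha,1}(-\lambda_k T^\alpha)}(\tilde c_k^\delta-c_k)$, so I would insert the stability bound of Lemma~\ref{lem:mlf-stability_est}, $\frac{1}{E_{\alpha,1}(-\lambda_k T^\alpha)}\le\bar C\frac{\lambda_k}{1-\alpha}$ (valid for $\lambda_k\ge\lambda_1$ and $\alpha$ sufficiently close to $1$), to obtain
\[
\|u_0^\delta(\cdot;\alpha)-u_0^{\mathrm{ex}}(\cdot;\alpha)\|_{L^2(\Omega)}^2\le\frac{\bar C^2}{(1-\alpha)^2}\sum_k\lambda_k^2|\tilde c_k^\delta-c_k|^2=\frac{\bar C^2}{(1-\alpha)^2}\|\mathbb{L}(\uT-\uTtd)\|_{L^2(\Omega)}^2.
\]
Since $s\ge1+\tfrac1p\ge1$, the smoothing estimate \eqref{eqn:delta_smoothed} together with $\lambda_k\ge\lambda_1$ gives $\|\mathbb{L}(\uT-\uTtd)\|_{L^2(\Omega)}\le\lambda_1^{1-s}\|\mathbb{L}^s(\uT-\uTtd)\|_{L^2(\Omega)}\le\lambda_1^{1-s}\tilde\delta$, so this term is at most a constant times $\tilde\delta/(1-\alpha)$, which $\to0$ by \eqref{eqn:apriorialpha}.

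For the approximation term I would use the relation $c_k=e^{-\lambda_k T}a_k$ to rewrite its coefficients as $a_k\bigl(\frac{e^{-\lambda_k T}}{E_{\alpha,1}(-\lambda_k T^\alpha)}-1\bigr)$, giving
\[
\|u_0^{\mathrm{ex}}(\cdot;\alpha)-u_0\|_{L^2(\Omega)}^2=\sum_k a_k^2\Bigl|\frac{e^{-\lambda_k T}}{E_{\alpha,1}(-\lambda_k T^\alpha)}-1\Bigr|^2.
\]
Fixing $\alpha_0\in(1-\tfrac1p,1)$ close enough to $1$ that both Lemma~\ref{lem:boundH2} and Lemma~\ref{lem:mlf-stability_est} apply for this $p$, its bound $\bigl|\frac{e^{-\lambda T}}{E_{\alpha,1}(-\lambda T^\alpha)}-1\bigr|\le\tilde C\lambda^{1+1/p}$ furnishes the \emph{$\alpha$-independent} majorant $a_k^2\tilde C^2\lambda_k^{2(1+1/p)}$ for each summand, which is summable exactly because $u_0\in\dH{2(1+1/p)}$. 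As $E_{\alpha,1}(-\lambda_k T^\alpha)\to e^{-\lambda_k T}$ for each fixed $k$ when $\alpha\to1$, every summand tends to $0$, and dominated convergence for series yields $\|u_0^{\mathrm{ex}}(\cdot;\alpha)-u_0\|_{L^2(\Omega)}\to0$ as $\alpha\to1$, hence as $\tilde\delta\to0$ (using $\alpha(\tilde\delta)\nearrow1$, so that eventually $\alpha(\tilde\delta)\ge\alpha_0$). Combining the two estimates proves the claim.

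I expect the approximation term to be the main obstacle. One cannot simply extract a convergence rate in $1-\alpha$ without over-assuming smoothness, so the clean route is the dominated-convergence argument, whose legitimacy rests on securing the $\alpha$-uniform majorant of Lemma~\ref{lem:boundH2}; this is exactly what forces the choice $\alpha_0>1-\tfrac1p$ and the source condition $u_0\in\dH{2(1+1/p)}$. The delicate structural point is that the propagation term wants $\alpha$ bounded away from $1$ while the approximation term wants $\alpha\to1$, and it is the cancellation of the $(1-\alpha)$ factor between Lemma~\ref{lem:rateE} and the stability estimate (already packaged into Lemma~\ref{lem:boundH2}) that guarantees a nonempty admissible window for $\alpha(\tilde\delta)$ satisfying \eqref{eqn:apriorialpha}.
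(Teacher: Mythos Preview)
Your proof is correct and follows essentially the same route as the paper: the same decomposition into a data-propagation term and an approximation term (the paper performs it inside the Fourier sum rather than via an intermediate $u_0^{\mathrm{ex}}$, but this is cosmetic), the same use of Lemma~\ref{lem:mlf-stability_est} to bound the first term by $\bar C\,\tilde\delta/(1-\alpha)$, and the same dominated-convergence argument based on Lemma~\ref{lem:boundH2} for the second. Your explicit justification that $\|\mathbb{L}(\uT-\uTtd)\|_{L^2}\le\lambda_1^{1-s}\tilde\delta$ and your remark that one needs $\alpha_0>1-\tfrac1p$ for Lemma~\ref{lem:boundH2} to apply with the given $p$ are details the paper leaves implicit.
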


\begin{proof}
In terms of Fourier coefficients, the $L^2$ error can be written as 
\begin{equation}\label{eqn:errordecomp}
\begin{aligned}
\|u_0^\delta(\cdot;\alpha(\delta))&-u_0\|_{L^2(\Omega)}
=\Bigl(\sum_{k=1}^\infty 
\Bigl(\frac{1}{E_{\alpha,1}(-\lambda_kT^\alpha)}\tilde{c}_k^\delta - a_k\Bigr)^2
\Bigr)^{1/2}\\
&=
\Bigl(\sum_{k=1}^\infty 
\Bigl(\frac{1}{E_{\alpha,1}(-\lambda_kT^\alpha)}(\tilde{c}_k^\delta-c_k) +
\Bigl(\frac{e^{-\lambda_kT}}{E_{\alpha,1}(-\lambda_kT^\alpha)}-1 \Bigr) a_k\Bigr)^2
\Bigr)^{1/2}\\
&\leq \bar{C}\frac{1}{1-\alpha} 
\bigl(\sum_{k=1}^\infty \lambda_k^2(\tilde{c}_k^\delta-c_k)^2\bigr)^{1/2}
+\left(\sum_{k=1}^\infty w_k(\alpha)^2 a_k^2\right)^{1/2}
\end{aligned}
\end{equation}
where 
\begin{equation}\label{eqn:wk}
w_k(\alpha)=\frac{e^{-\lambda_kT}}{E_{\alpha,1}(-\lambda_kT^\alpha)}-1 
\end{equation} 
and we have used the triangle inequality as well as \eqref{eqn:stability_est}.
The first term on the right hand side is bounded by 
$ \bar{C}\frac{1}{1-\alpha} \tilde{\delta}$,
which tends to zero as $\tilde{\delta}\to0$ under condition
\eqref{eqn:apriorialpha}.
The second term on the right hand side tends to zero as
$\alpha\nearrow1$, since we have, due to \eqref{eqn:boundH2}, 
$$
w_k(\alpha) \to0\mbox{ as }\alpha\nearrow1\mbox{ and }
w_k(\alpha)\leq \tilde{C} 
\revision{\lambda_k^{1+1/p}} 
\mbox{ for all }k\in\mathbb{N}\,.
$$
\Margin{Report C (28)}
From the fact that 
$
\Bigl(\sum_{k=1}^\infty \lambda_k^{2(1+1/p)} a_k^2\Bigr)^{1/2}
\revision{=\|u_0\|_{\dH{2(1+1/p)}}}
<\infty
$
\Margin{Report C (15)}
and Lebesgue's Dominated Convergence Theorem,
we have convergence of the infinite series\\
$\sum_{k=1}^\infty w_k(\alpha)^2 a_k^2$ to zero as $\alpha\nearrow1$.
\end{proof}

We now consider an a posteriori choice of $\alpha$ according to the discrepancy principle, applied to the smoothed data 
%and the discrepancy considered in $H^{2s}$
\begin{equation}\label{eqn:dpalpha}
\underline{\tau}\tilde{\delta}\leq \|\exp(\mathbb{L}T)u_0^\delta(\cdot;\alpha)-\uTtd\|\leq \overline{\tau}\tilde{\delta}
\end{equation}
for some fixed constants $1<\underline{\tau}<\overline{\tau}$ independent of $\tilde{\delta}$.

The fact that this regularization parameter choice is
well-defined, that is existence of an $\alpha$ such that
\eqref{eqn:dpalpha} holds, can be proven under the assumption
\begin{equation}\label{eqn:snr}
\|\uTtd\|_{L^2}> \hat{\tau} \tilde{\delta}
\end{equation}
with $\hat{\tau}=\overline{\tau}/(1-(1+\lambda_1T)\exp(-\lambda_1T))$ (note that the factor $1-\exp(-\lambda_1T)(1+\lambda_1T)$ is always positive).
Namely, from Lemma~\ref{lem:mlf-asympt-bound} with 
\revision{
$\lim_{x\to1}\Gamma(x)=1$ we
conclude that $w_k(\alpha)$ as defined in \eqref{eqn:wk} satisfies 
$\lim_{\alpha\to0} w_k(\alpha)=e^{-\lambda_kT}(1+\lambda_kT)-1$
and $w_k(\alpha)\leq \tilde{C} \lambda_k^{1+1/p}$ with $\sum_{k=1}^\infty \Bigl(\lambda_k^{1+1/p}\tilde{c}_k^\delta\Bigr)^2=\|\uTtd\|_{\dH{2(1+1/p)}}<\infty$, thus by Lebesgue's Dominated Convergence Theorem,
\begin{equation*}
\begin{aligned}
\lim_{\alpha\to0} \|\exp(\mathbb{L}T)u_0^\delta(\cdot;\alpha)-\uTtd\|_{L^2}
&= \lim_{\alpha\to0} \Bigl(\sum_{k=1}^\infty \Bigl(w_k(\alpha)\, \tilde{c}_k^\delta\Bigr)^2
\Bigr)^{1/2}\\
&= \Bigl(\sum_{k=1}^\infty \Bigl(e^{-\lambda_kT}(1+\lambda_kT)-1\Bigr)^2(\tilde{c}_k^\delta)^2
\Bigr)^{1/2}\\
&\geq (1-e^{-\lambda_1T}(1+\lambda_1T)) 
\Bigl(\sum_{k=1}^\infty \tilde{c}_k^\delta\Bigr)^2\\
&= (1-e^{-\lambda_1T}(1+\lambda_1T)) \|\uTtd\|_{L^2} 
> \overline{\tau}\tilde{\delta}\,.
\end{aligned}
\end{equation*}
}
\Margin{Report C (29)}
On the other hand, 
\begin{equation*}
\lim_{\alpha\to1} \|\exp(\mathbb{L}T)u_0^\delta(\cdot;\alpha)-\uTtd\|_{L^2}
=0
%=\|\uTdel-\uTtd\|_{L^2}\leq \delta+\tilde{\delta}
\leq\underline{\tau}\tilde{\delta}\,.
\end{equation*}
Hence, from continuity of the mapping $\alpha\mapsto \|\exp(\mathbb{L}T)u_0^\delta(\cdot;\alpha)-\uTtd\|_{L^2}$ on the interval $(0,1)$ (which would actually not hold on $[0,1)$!) and the Intermediate Value Theorem, we conclude existence of $\alpha\in(0,1)$ such that \eqref{eqn:dpalpha} holds.

Note that the case of condition \eqref{eqn:snr} being violated for all $\delta>0$ sufficiently small is trivial in the sense that then obviously $u_0=0$ holds.

\begin{theorem}\label{prop:reg_aposteriori}
Let $u_0\in 
\revision{\dH{2(1+2/p)}}
$ 
\Margin{Report C (15)}
for some $p\in(1,\infty)$, and let
$u_0^\delta(\cdot;\alpha)$ be defined by \eqref{eqn:u0bsd} with $\uTtd=w^{(i_*)}$
according to \eqref{eqn:LW}, \eqref{eqn:Asmoothing}, \eqref{eqn:istar}, with $s\geq 1+\frac{1}{p}$, and
assume that $\alpha=\alpha(\uTtd,\tilde{\delta})$ is chosen according to \eqref{eqn:dpalpha}.
Then 
\begin{equation*}
u_0^\delta(\cdot;\alpha(\uTtd,\tilde{\delta}))
\rightharpoonup u_0 \mbox{ in }L^2(\Omega) 
\,, \quad \mbox{ as }\delta\to0\,.
\end{equation*}
\end{theorem}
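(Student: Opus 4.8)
The plan is to follow the classical two-step route for an a posteriori parameter choice: first show that the family $\{u_0^\delta(\cdot;\alpha)\}$ stays bounded in $L^2(\Omega)$ as $\delta\to0$, and then identify every weak accumulation point with $u_0$ by exploiting compactness and injectivity of the forward map $\exp(\mathbb{L}T)$. Working throughout in the eigenbasis, I note that the residual controlled by the discrepancy principle \eqref{eqn:dpalpha} has Fourier coefficients $w_k(\alpha)\tilde{c}_k^\delta$, with $w_k(\alpha)$ from \eqref{eqn:wk}, so that \eqref{eqn:dpalpha} reads $\underline{\tau}^2\tilde\delta^2\le\sum_k w_k(\alpha)^2(\tilde{c}_k^\delta)^2\le\overline{\tau}^2\tilde\delta^2$.

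The step I expect to be the main obstacle is to turn the \emph{lower} discrepancy bound into the a priori-style control $\tilde\delta/(1-\alpha)\le C$, which is exactly the quantity governing noise propagation through the stability estimate \eqref{eqn:stability_est}. To this end I would split $\tilde{c}_k^\delta=c_k+(\tilde{c}_k^\delta-c_k)$ with $c_k=e^{-\lambda_k T}a_k$ and bound the two resulting sums separately. For the exact-data part I would combine the rate estimate \eqref{eqn:rateE} with $e^{-\lambda_k T}/E_{\alpha,1}(-\lambda_k T^\alpha)=1+w_k(\alpha)\le 1+\tilde C\lambda_k^{1+1/p}$ from \eqref{eqn:boundH2} to obtain $|w_k(\alpha)e^{-\lambda_k T}|\le C'(1-\alpha)\lambda_k^{1+2/p}$; this is precisely where the stronger hypothesis $u_0\in\dH{2(1+2/p)}$ is consumed, giving $\sum_k w_k(\alpha)^2 c_k^2\le C_3^2(1-\alpha)^2$. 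For the noise part, \eqref{eqn:boundH2} together with the smoothing estimate \eqref{eqn:delta_smoothed} (recall $s\ge 1+1/p$) and $\lambda_k\ge\lambda_1$ gives $\sum_k w_k(\alpha)^2(\tilde{c}_k^\delta-c_k)^2\le C_4^2\tilde\delta^2$. Substituting both into the discrepancy lower bound yields $(\underline{\tau}^2-2C_4^2)\tilde\delta^2\le 2C_3^2(1-\alpha)^2$, so that, for $\underline{\tau}$ taken large enough relative to the fixed smoothing constant $C_4$, one concludes $\tilde\delta/(1-\alpha)\le C_5$.

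With this bound in hand, uniform boundedness follows from the same error decomposition as in \eqref{eqn:errordecomp}: the noise-propagation term $\sum_k E_{\alpha,1}(-\lambda_k T^\alpha)^{-2}(\tilde{c}_k^\delta-c_k)^2$ is controlled via \eqref{eqn:stability_est} and \eqref{eqn:delta_smoothed} by a constant multiple of $(\tilde\delta/(1-\alpha))^2\le C_5^2$, while the bias term $\sum_k(1+w_k(\alpha))^2 a_k^2$ is bounded uniformly in $\alpha$ by $2\|u_0\|_{L^2}^2+2\tilde C^2\|u_0\|_{\dH{2(1+1/p)}}^2$ using \eqref{eqn:boundH2}. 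Hence $\sup_\delta\|u_0^\delta(\cdot;\alpha)\|_{L^2}<\infty$.

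Finally I would identify the weak limit. Given any $\delta_n\to0$, boundedness provides a subsequence with $u_0^{\delta_n}(\cdot;\alpha)\rightharpoonup\hat u$ in $L^2(\Omega)$. Because $\exp(\mathbb{L}T)$ is compact (its eigenvalues $e^{-\lambda_k T}$ tend to zero), it sends this weakly convergent sequence to a strongly convergent one, $\exp(\mathbb{L}T)u_0^{\delta_n}(\cdot;\alpha)\to\exp(\mathbb{L}T)\hat u$. At the same time the discrepancy bound and the smoothing estimate $\|\uT-\uTtd\|_{L^2}\le C_1\delta$ give $\|\exp(\mathbb{L}T)u_0^{\delta_n}(\cdot;\alpha)-\uT\|_{L^2}\le\overline{\tau}\tilde\delta+C_1\delta\to0$, whence $\exp(\mathbb{L}T)\hat u=\uT=\exp(\mathbb{L}T)u_0$. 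Injectivity of $\exp(\mathbb{L}T)$ forces $\hat u=u_0$, and since every sequence admits a subsequence with this same weak limit, the whole family converges weakly to $u_0$. I note that the gap between this weak statement and strong convergence appears genuine here: the noise-propagation term is only bounded, not shown to vanish, which is why only weak convergence is asserted.
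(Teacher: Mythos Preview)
Your proof is correct and follows essentially the same route as the paper: split the residual $w_k(\alpha)\tilde c_k^\delta$ into exact-data and noise parts, combine \eqref{eqn:rateE} and \eqref{eqn:boundH2} on the former (consuming the $\dH{2(1+2/p)}$ hypothesis) and \eqref{eqn:boundH2} with \eqref{eqn:delta_smoothed} on the latter, obtain uniform $L^2$ boundedness from \eqref{eqn:stability_est}, and finish by weak compactness and a subsequence--subsequence argument. The only cosmetic differences are that you extract the inequality $\tilde\delta/(1-\alpha)\le C_5$ explicitly (the paper instead substitutes the lower discrepancy bound directly into the error estimate and bounds $w_k(\alpha)e^{-\lambda_kT}/(1-\alpha)$), and you identify the weak limit via compactness and injectivity of $\exp(\mathbb{L}T)$ rather than via the Fourier-side upper estimate \eqref{eqn:deltaest}; both the paper and you implicitly need $\underline\tau$ large relative to the smoothing constant $\tilde C$, which you state and the paper uses without comment.
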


\begin{proof}
In view of the representation
\begin{equation*}
\begin{aligned}
&\|\exp(\mathbb{L}T)u_0^\delta(\cdot;\alpha)-\uTdel\|
=\Bigl(\sum_{k=1}^\infty 
\Bigl(w_k(\alpha) \tilde{c}_k^\delta\Bigr)^2
\Bigr)^{1/2}\\
&\qquad\qquad\qquad\leq
\Bigl(\sum_{k=1}^\infty \Bigl(w_k(\alpha) e^{-\lambda_kT} a_k\Bigr)^2
\Bigr)^{1/2}
+\Bigl(\sum_{k=1}^\infty 
\Bigl(w_k(\alpha) (\tilde{c}_k^\delta-c_k)\Bigr)^2
\Bigr)^{1/2}\,,
\end{aligned}
\end{equation*}
and likewise 
\begin{equation*}
\|\exp(\mathbb{L}T)u_0^\delta(\cdot;\alpha)-\uTdel\|
\geq
\Bigl(\sum_{k=1}^\infty \Bigl(w_k(\alpha) e^{-\lambda_kT} a_k\Bigr)^2
\Bigr)^{1/2}
-\Bigl(\sum_{k=1}^\infty 
\Bigl(w_k(\alpha) (\tilde{c}_k^\delta-c_k)\Bigr)^2
\Bigr)^{1/2}\,,
\end{equation*}
as well as \eqref{eqn:boundH2}, which 
%together with \eqref{eqn:delta_smoothed} 
yields 
\Margin{Report C (30)}
\begin{equation}\label{eqn:wkalphack}
\Bigl(\sum_{k=1}^\infty 
\Bigl(w_k(\alpha) (\tilde{c}_k^\delta-c_k)\Bigr)^2
\Bigr)^{1/2}
\leq \tilde{C} \|\uTtd-\uT\|_{
\revision{\dH{2(1+1/p)}}
},
\end{equation}
\Margin{Report C (15)}
the discrepancy principle \eqref{eqn:dpalpha} yields
\begin{equation}\label{eqn:deltaest}
(\underline{\tau}-\tilde{C})\tilde{\delta}\leq
\Bigl(\sum_{k=1}^\infty 
\Bigl(w_k (\alpha) e^{-\lambda_kT} a_k\Bigr)^2
\Bigr)^{1/2}
\leq (\overline{\tau}+\tilde{C})\tilde{\delta}\,.
\end{equation}
From the error decomposition \eqref{eqn:errordecomp} we therefore conclude
\Margin{Report C (31)}
\begin{equation*}
\begin{aligned}
&\|u_0^\delta(\cdot;\alpha
%,K
)-u_0\|_{L^2(\Omega)}
\leq \bar{C}\frac{\tilde{\delta}}{1-\alpha} 
+\left(\sum_{k=1}^\infty \Bigl(w_k(\alpha) a_k\Bigr)^2\right)^{1/2}\\
&\leq \frac{\bar{C}}{\underline{\tau}-\tilde{C}} 
\Bigl(\sum_{k=1}^\infty 
\Bigl(\frac{w_k (\alpha) e^{-\lambda_kT}}{1-\alpha} a_k\Bigr)^2
\Bigr)^{1/2}
+\left(\sum_{k=1}^\infty \Bigl(w_k(\alpha) a_k\Bigl)^2\right)^{1/2}\,,
\end{aligned}
\end{equation*}
where due to \eqref{eqn:rateE} and \eqref{eqn:boundH2} we have
\begin{equation*}
\begin{aligned}
&\left|\frac{w_k (\alpha) e^{-\lambda_kT}}{1-\alpha}\right| 
=\left|\frac{e^{-\lambda_kT}}{E_{\alpha,1}(-\lambda_kT^\alpha)}-1 \right| \frac{e^{-\lambda_kT}}{1-\alpha}\\
&\qquad\qquad=\left|e^{-\lambda_kT}-E_{\alpha,1}(-\lambda_kT^\alpha)\right|
\frac{e^{-\lambda_kT}}{E_{\alpha,1}(-\lambda_kT^\alpha)} \frac{1}{1-\alpha}
\leq C \lambda_k^{1/p} \bigl(1+\tilde{C} \lambda_k^{1+1/p}\bigr) \,.
\end{aligned}
\end{equation*}
Taking into account the assumption $u_0\in 
\revision{\dH{2(1+2/p)}}
$,
\Margin{Report C (15)}
we get that $u_0^\delta(\cdot;\alpha)$ is uniformly bounded in $L^2(\Omega)$ and thus has a weakly convergent subsequence whose limit due to the upper estimate in \eqref{eqn:deltaest} has to coincide with $u_0$. A subsequence-subsequence argument therefore yields weak $L^2$ convergence of $u_0^\delta(\cdot;\alpha)^\delta$ to $u_0$.
\end{proof}

Concerning convergence rates, observe, first of all, that the rate and stability estimates \eqref{eqn:rateE}, \eqref{eqn:boundH2} yields the following convergence rate for the time fractional reconstruction in case of very smooth data $u_0$ and noise free data.
\[
\begin{aligned}
\|u_0^0(\cdot;\alpha)-u_0\|_{L^2(\Omega)} &\leq \sqrt{\sum_{k=1}^\infty  \left(\left(\frac{\exp(-\lambda_k T)}{E_{\alpha,1}(-\lambda_k T^\alpha)}-1\right)a_k\right)^2}\\
&\leq C(1-\alpha)
\sqrt{\sum_{k=1}^\infty \left(\exp(\lambda_k T) (1+\tilde{C}\lambda_k^{1+1/p}) \lambda_k^{1/p} a_k\right)^2}
\end{aligned}
\]
where $a_k$ are the Fourier coefficients of the initial data and hence the right hand side is a very strong norm of $u_0$.
\\
Convergence rates under weaker norm bounds on $u_0$ and with noisy data can be obtained similarly to \cite{Hohage97} by means of Jensen's inequality and an appropriate choice of $\alpha$. 
\begin{theorem}\label{prop:reg_rate}
Let $u_0\in 
\revision{\dH{2(1+1/p+\max\{1/p,q\})}}
$ 
\Margin{Report C (15)}
for some $p\in(1,\infty)$, $q>0$, and let
$u_0^\delta(\cdot;\alpha)$ be defined by \eqref{eqn:u0bsd} with $\uTtd=w^{(i_*)}$ according to \eqref{eqn:LW}, \eqref{eqn:Asmoothing}, \eqref{eqn:istar}, with $s\geq 1$, and
assume that $\alpha=\alpha(\tilde{\delta})$ is chosen such that
\begin{equation}\label{eqn:apriorialpha_rate}
1-\alpha(\tilde{\delta})\sim \sqrt{\tilde{\delta}}
\,, \quad \mbox{ as } \tilde{\delta}\to0\,, 
\end{equation}
Then 
\begin{equation}\label{eqn:rate}
\|u_0^\delta(\cdot;\alpha(\tilde{\delta}))-u_0\|_{L^2(\Omega)} = O\left(\log(\tfrac{1}{\delta})^{-2q}\right)
\,, \quad \mbox{ as } \delta\to0\,.
\end{equation}
In the noise free case we have 
\begin{equation}\label{eqn:ratedel0}
\|u_0^0(\cdot;\alpha)-u_0\|_{L^2(\Omega)} = O\left(\log(\tfrac{1}{1-\alpha})^{-2q}\right) 
\,, \quad \mbox{ as }\alpha\nearrow 1\,.
\end{equation}
\end{theorem}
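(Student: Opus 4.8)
The plan is to start from the error splitting already recorded in \eqref{eqn:errordecomp}, which bounds $\|u_0^\delta(\cdot;\alpha)-u_0\|_{L^2(\Omega)}$ by a data propagation term $\bar C\,\tilde\delta/(1-\alpha)$ and an approximation (bias) term $(\sum_k w_k(\alpha)^2 a_k^2)^{1/2}$, with $w_k(\alpha)$ as in \eqref{eqn:wk}. Under the a priori rule \eqref{eqn:apriorialpha_rate}, i.e. $1-\alpha\sim\sqrt{\tilde\delta}$, the data term is of order $\sqrt{\tilde\delta}$; inserting the smoothing estimate \eqref{eqn:delta_smoothed} from Lemma~\ref{lem:smoothing} gives $\sqrt{\tilde\delta}=O(\sqrt{\delta}\,(\log\tfrac1\delta)^{1/4})$, which decays polynomially in $\delta$ and hence faster than any negative power of $\log(1/\delta)$. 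Thus the data term is dominated by the approximation term and only affects the constant in \eqref{eqn:rate}. The noise-free claim \eqref{eqn:ratedel0} is exactly the case $\delta=0$, where only the bias survives and $1-\alpha$ itself plays the role of regularization parameter; the passage from \eqref{eqn:ratedel0} to \eqref{eqn:rate} is then immediate since $\log\tfrac1{1-\alpha}\sim\tfrac12\log\tfrac1{\tilde\delta}\sim\tfrac12\log\tfrac1\delta$.

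The heart of the proof is the bias term, and here I would follow the Jensen-inequality technique of \cite{Hohage97}. The two inputs are the rate estimate \eqref{eqn:rateE}, which quantifies how fast $E_{\alpha,1}(-\lambda_kT^\alpha)\to e^{-\lambda_kT}$ (hence $w_k(\alpha)\to0$) in $1-\alpha$, and the uniform control \eqref{eqn:boundH2}, $|w_k(\alpha)|\le\tilde C\lambda_k^{1+1/p}$. Setting $s=2(1+\tfrac1p+\max\{\tfrac1p,q\})$ and $\rho^2=\|u_0\|_{\dH{s}}^2$, I would write the bias as $\rho^2\sum_k\nu_k\,w_k(\alpha)^2\lambda_k^{-s}$ with the source-weighted probabilities $\nu_k=\lambda_k^s a_k^2/\rho^2$, and then apply Jensen's inequality to a suitably chosen concave index function so that the expectation of the per-mode error is dominated by its value at the averaged argument. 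Because the underlying problem is exponentially ill-posed -- precisely where the blow-up $\Gamma(1-\alpha)\sim(1-\alpha)^{-1}$ of the stability constant in \eqref{eqn:stability_est} enters -- a polynomial (Sobolev) source condition is converted into a purely \emph{logarithmic} decay, and the bookkeeping is arranged to produce the order in \eqref{eqn:rate}.

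Concretely, I would split the spectrum at a cut-off $\lambda_*\sim\tfrac1T\log\tfrac1{1-\alpha}$. On the high band $\lambda_k>\lambda_*$ the uniform bound \eqref{eqn:boundH2} combined with the smoothness exponent $s$ yields a contribution governed by a negative power of $\log\tfrac1{1-\alpha}$ of order determined by $q$; on the low band the explicit $(1-\alpha)$-dependence extracted from \eqref{eqn:rateE} (after dividing by $E_{\alpha,1}$, which is bounded below there) makes the contribution decay polynomially in $1-\alpha$ and hence negligible against the high-band term. This split also explains the form of the smoothness index: the high band requires an extra $q$ derivatives to reach the target order, while the low band, carrying the amplification factor $\lambda_k^{1+1/p}$ from \eqref{eqn:boundH2}, requires an extra $\tfrac1p$; the hypothesis $u_0\in\dH{2(1+1/p+\max\{1/p,q\})}$ is exactly what makes both bands close simultaneously, and the $\max\{\tfrac1p,q\}$ is the larger of these two budgets.

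The step I expect to be the main obstacle is the Jensen estimate itself: one must choose the concave index function and organise the low/high splitting so that, \emph{uniformly} in $\alpha\in[\alpha_0,1)$, the exponential factor $e^{\lambda_kT}$ (equivalently the blow-up $\Gamma(1-\alpha)\sim(1-\alpha)^{-1}$) is exactly offset by the polynomial source weight $\lambda_k^{-s}$, since neither \eqref{eqn:rateE} nor \eqref{eqn:boundH2} alone produces a rate for the high modes. A secondary technical point is that \eqref{eqn:rateE} is only available for $p<1/(1-\alpha_0)$ and for $\alpha$ bounded away from $0$, so the whole argument must be carried out on $[\alpha_0,1)$ with all constants checked to be independent of $\alpha$ and $\lambda$; once this uniformity is secured, \eqref{eqn:ratedel0} follows for the noise-free case and \eqref{eqn:rate} follows from it via $1-\alpha\sim\sqrt{\tilde\delta}$ and $\tilde\delta=O(\delta\sqrt{\log(1/\delta)})$.
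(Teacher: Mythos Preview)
Your high-level structure (error decomposition \eqref{eqn:errordecomp}, handle the propagated-noise term via $1-\alpha\sim\sqrt{\tilde\delta}$, then estimate the bias) matches the paper. The routes diverge at the bias term. The paper does \emph{not} split the spectrum: it applies Jensen's inequality once, with the explicit index function $f(x)=(c-\log x)^{-q}$ (where $c=1-\lambda_1T$) and the convex function $\varphi(\xi)=\xi\exp\bigl(2(c-\xi^{-1/(2q)})\bigr)$ satisfying $\varphi(f^2(x))=x^2f^2(x)$. Setting $\sigma_k=e^{-\lambda_kT}$ and $\omega_k=w_k(\alpha)(c+\lambda_kT)^q a_k$, Jensen bounds the squared bias $\sum_k(f(\sigma_k)\omega_k)^2$ in terms of $\varphi^{-1}$ evaluated at a ratio whose numerator is $\sum_k(w_k(\alpha)e^{-\lambda_kT}a_k)^2\le C(1-\alpha)^2\|u_0\|^2_{\dH{2(1+2/p)}}$ (by \eqref{eqn:rateE} \emph{and} \eqref{eqn:boundH2} combined) and whose denominator is bounded by $\|u_0\|^2_{\dH{2(1+1/p+q)}}$. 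The $\max\{1/p,q\}$ in the hypothesis is exactly the requirement that both of these norms be finite. Your band-splitting at $\lambda_*\sim\tfrac1T\log\tfrac1{1-\alpha}$ is a legitimate, more elementary alternative that avoids constructing $\varphi$, and the high-band estimate you sketch is correct.

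There is, however, a genuine gap in your low-band argument. You write that on $\lambda_k\le\lambda_*$ one divides \eqref{eqn:rateE} by $E_{\alpha,1}(-\lambda_kT^\alpha)$, ``which is bounded below there''. It is not: by Lemma~\ref{lem:mlf-asympt-bound}, $E_{\alpha,1}(-\lambda_kT^\alpha)\ge (1+\Gamma(1-\alpha)\lambda_kT^\alpha)^{-1}$, and since $\Gamma(1-\alpha)\sim(1-\alpha)^{-1}$ this lower bound tends to zero as $\alpha\nearrow1$ for \emph{every} fixed $\lambda_k$, let alone uniformly up to $\lambda_*\to\infty$. With your cutoff the denominator eats the entire factor $(1-\alpha)$ from \eqref{eqn:rateE} and the low-band contribution does not decay. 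The fix is precisely what the paper exploits (and what you allude to later as ``the exponential factor $e^{\lambda_kT}$ being offset''): bound the product $|w_k(\alpha)e^{-\lambda_kT}|\le C\lambda_k^{1/p}(1-\alpha)(1+\tilde C\lambda_k^{1+1/p})$ using \eqref{eqn:rateE} and \eqref{eqn:boundH2} together, then multiply by $e^{\lambda_kT}\le e^{\lambda_*T}$. This forces you to take $\lambda_*=\tfrac{c}{T}\log\tfrac1{1-\alpha}$ with $c<1$ so that a positive power $(1-\alpha)^{1-c}$ survives on the low band; with this adjustment your splitting argument closes and yields the same rate as the paper's Jensen computation.
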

\begin{proof}
For $c=1-\lambda_1T$ and some $q>0$ set
\[
\begin{aligned}
&f(x):=\Bigl(c-\log(x)\Bigr)^{-q} \mbox{ for }x\in(0,\exp(-\lambda_1 T)] \\
&\varphi(\xi):=\xi \exp\Bigl(2\Bigl(c-\xi^{-\frac{1}{2q}}\Bigr)\Bigr) \mbox{ for } \xi\in (0,1]\,,
\end{aligned}\]
so that 
\[
f(x)\in(0,1]\mbox{ and }\quad \varphi(f^2(x))=x^2f^2(x)\mbox{ for }x\in(0,\exp(-\lambda_1 T)]\,.
\]
It is readily checked that $\varphi$ is convex and strictly monotonically increasing, and that the values of its inverse can be estimated as follows
\begin{equation}\label{estphiinv}
\begin{aligned}
%\varphi^{-1}(r)\leq \left(\frac{2c+\log(\tfrac{1}{r})}{2+C_q}\right)^{-2q}
%\mbox{ for }r\in(0,e^{2\lambda_1T}]\,,\\
b\varphi^{-1}(\tfrac{a}{b})\leq \left(\frac{2c+\log(\tfrac{1}{a})}{2B^{\frac{1}{2q}}+C_q}\right)^{-2q}
\mbox{ for }\tfrac{a}{b}\in(0,e^{2\lambda_1T}]\,,\quad b\in(0,B]\,,\\
\end{aligned}
\end{equation}
where $C_q>0$ is chosen such that 
\[
\log(z)\leq C_qz^{\frac{1}{2q}}\mbox{ for }z\geq
\revision{\tfrac{1}{B}}\,.
\]
\revision{
Estimate \eqref{estphiinv} can be verified by the following chain of implications and estimates
\[
\begin{aligned}
&\xi=\varphi^{-1}(\tfrac{a}{b})
\ \Leftrightarrow \
\tfrac{a}{b}=\varphi(\xi)=\xi \exp\Bigl(2\Bigl(c-\xi^{-\frac{1}{2q}}\Bigr)\Bigr)
\\& \Leftrightarrow \
\log(a)=\log(b\xi)+2c-2b^{\frac{1}{2q}} (b\xi)^{-\frac{1}{2q}}
\\& \Leftrightarrow \
2c +\log(\tfrac{1}{a})=\log(\tfrac{1}{b\xi})+2b^{\frac{1}{2q}} (b\xi)^{-\frac{1}{2q}}
\leq (C_q+2B^{\frac{1}{2q}})(b\xi)^{-\frac{1}{2q}}
\\& \Leftrightarrow \
b\xi\leq \left(\frac{2c+\log(\tfrac{1}{a})}{2B^{\frac{1}{2q}}+C_q}\right)^{-2q}
\end{aligned}
\]
}
\Margin{Report C (32)}

Therefore, Jensen's inequality yields, for any two sequences 
$(\sigma_k)_{k\in\mathbb{N}}$\\ $\subseteq(0,\exp(-\lambda_1 T)]$ and $(\omega_k)_{k\in\mathbb{N}}\in\ell^2$,
\[
\varphi\left(\frac{\sum_{k=1}^\infty \Bigl(f(\sigma_k)\omega_k\Bigr)^2}{\sum_{k=1}^\infty \omega_k^2}\right)
\leq \frac{\sum_{k=1}^\infty \varphi(f(\sigma_k)^2)\omega_k^2}{\sum_{k=1}^\infty \omega_k^2}
=\frac{\sum_{k=1}^\infty \Bigl(f(\sigma_k)\sigma_k\omega_k\Bigr)^2}{\sum_{k=1}^\infty \omega_k^2}\,.
\]
Hence, applying $\varphi^{-1}$ to both sides and using \eqref{estphiinv}, we obtain
\begin{equation}\label{eqn:Jensen}
\sum_{k=1}^\infty \Bigl(f(\sigma_k)\omega_k\Bigr)^2\leq 
\sum_{k=1}^\infty \omega_k^2 \quad \varphi^{-1}\left(\frac{\sum_{k=1}^\infty \Bigl(f(\sigma_k)\sigma_k\omega_k\Bigr)^2}{\sum_{k=1}^\infty \omega_k^2}\right)
\leq \left(
\revision{
\frac{2c+\log(\tfrac{1}{a})}{2B^{\frac{1}{2q}}+C_q}
}
\right)^{-2q}
\end{equation}
\Margin{Report C (32)}
for
\[
a=\sum_{k=1}^\infty \Bigl(f(\sigma_k)\sigma_k\omega_k\Bigr)^2\,, \quad
b=\sum_{k=1}^\infty \omega_k^2\,.
\] 
Setting 
\[
\begin{aligned}
&\sigma_k:=\exp(-\lambda_k T)\,,\\
&\omega_k:=\left(\frac{\exp(-\lambda_k T)}{E_{\alpha,1}(-\lambda_k T^\alpha)}-1\right)\frac{1}{f(\sigma_k)}\, a_k
=\left(\frac{\exp(-\lambda_k T)}{E_{\alpha,1}(-\lambda_k T^\alpha)}-1\right)
(c+\lambda_kT)^q\, a_k\, 
\end{aligned}
\] 
so that $\sum_{k=1}^\infty \Bigl(f(\sigma_k)\omega_k\Bigr)^2=\|u_0^0(\cdot;\alpha)-u_0\|_{L^2(\Omega)}^2$ and by \eqref{eqn:rateE}, \eqref{eqn:boundH2}
\[
\begin{aligned}
&a=\sum_{k=1}^\infty \left(\left(\frac{\exp(-\lambda_k T)}{E_{\alpha,1}(-\lambda_k T^\alpha)}-1\right) \exp(-\lambda_k T) \, a_k\right)^2 \\
&\qquad\leq C^2(1-\alpha)^2
\sum_{k=1}^\infty \left((1+\tilde{C}\lambda_k^{1+1/p}) \lambda_k^{1/p}\, a_k\right)^2
\\
&b=\sum_{k=1}^\infty \omega_k^2\leq \tilde{C}^2
\sum_{k=1}^\infty \left(\lambda_k^{1+1/p} (c+\lambda_kT)^q\, a_k\right)^2
=:B\,,
\end{aligned}
\]
we deduce from \eqref{eqn:Jensen} the rate \eqref{eqn:ratedel0}.
\\
The rate \eqref{eqn:rate} with noisy data follows from the error decomposition \eqref{eqn:errordecomp} 
\revision{using the fact that the second term in \eqref{eqn:errordecomp} just coincides with  
$\|u_0^0(\cdot;\alpha)-u_0\|_{L^2(\Omega)}$, for which we can make use of \eqref{eqn:ratedel0},
}
\Margin{Report C (33)}
\[
\|u_0^\delta(\cdot;\alpha)-u_0\|_{L^2(\Omega)}\leq\bar{C}\frac{\tilde{\delta}}{1-\alpha} + O\left(\log(\tfrac{1}{1-\alpha})^{-2q}\right)\,,
\]
together with the parameter choice \eqref{eqn:apriorialpha_rate}.
\end{proof}

\subsection{Split frequency subdiffusion regularization} \label{sec:conv_split}
Our actual goal is to establish convergence in the sense of a regularization method of the split frequency subdiffusion reconstruction
\begin{equation}\label{eqn:u0bsd_split}
\begin{aligned}
u_0^\delta(\cdot;\alpha,K) 
&= u_{0,lf}^\delta(\cdot;K)+u_{0;hf}^\delta(\cdot;\alpha,K)\\
&=\sum_{k=1}^K \exp(-\lambda_k T) c_k^\delta \phi_k
+\sum_{k=K+1}^\infty E_{\alpha,1}(-\lambda_k T^\alpha) \tilde{c}_k^\delta \phi_k
\end{aligned}
\end{equation}
Initially  $K$ is determined by the discrepancy principle 
\begin{equation}\label{eqn:dpK}
K=\min\{ k\in\mathbb{N}\, : \, \|\exp(\mathbb{L}T)u_{0,lf}^\delta-\uTdel\|\leq \tau\delta\}
\end{equation}
for some fixed $\tau>1$.
This determines the low frequency part $u_{0,lf}^\delta(\cdot;K)$. After this is done, $u_{0,hf}^\delta(\cdot;\alpha,K)$ is computed, with $\alpha$ calibrated according to the discrepancy principle 
\begin{equation}\label{eqn:dpalpha_split}
\underline{\tau}\tilde{\delta}\leq \|\exp(\mathbb{L}T)u_0^\delta(\cdot;\alpha,K)-\uTdel\|\leq \overline{\tau}\tilde{\delta}
\end{equation}

\begin{theorem}\label{prop:reg_aposteriori_split}
Let $u_0\in 
\revision{\dH{2(1+2/p)}}
$ 
\Margin{Report C (15)}
for some $p\in(1,\infty)$, and let
$u_0^\delta(\cdot;\alpha,K)$ be defined by \eqref{eqn:u0bsd_split} with $\uTtd=w^{(i_*)}$ according to \eqref{eqn:LW}, \eqref{eqn:Asmoothing}, \eqref{eqn:istar}, with $s\geq 1+\frac{1}{p}$, and
assume that $K=K(\uTdel,\delta)$ and $\alpha=\alpha(\uTtd,\tilde{\delta})$  are chosen according to \eqref{eqn:dpK} and \eqref{eqn:dpalpha_split}.
Then 
\begin{equation*}
u_0^\delta(\cdot;\alpha(\uTtd,\tilde{\delta}),K(\uTdel,\delta))\rightharpoonup u_0 \mbox{ in }L^2(\Omega) 
\,, \quad \mbox{ as } \delta\to0\,.
\end{equation*}
\end{theorem}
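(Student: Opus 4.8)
The plan is to follow the architecture of the proof of Theorem~\ref{prop:reg_aposteriori}, exploiting that the two frequency bands decouple cleanly under the forward map $\exp(\mathbb{L}T)$. Writing $P_K$ for the $L^2(\Omega)$-orthogonal projector onto $\mathrm{span}\{\phi_1,\dots,\phi_K\}$, I would first split the error as
\[
u_0^\delta(\cdot;\alpha,K)-u_0
=\bigl(u_{0,lf}^\delta(\cdot;K)-P_Ku_0\bigr)
+\bigl(u_{0,hf}^\delta(\cdot;\alpha,K)-(I-P_K)u_0\bigr).
\]
Because $A_{par}(k)=e^{\lambda_kT}$ exactly inverts $\exp(\mathbb{L}T)$ on the first $K$ modes, the low-frequency part of the residual $\exp(\mathbb{L}T)u_0^\delta(\cdot;\alpha,K)-\uTdel$ vanishes identically, so that residual is supported on the modes $k>K$ only. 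This is the key structural fact: the discrepancy principle \eqref{eqn:dpalpha_split} then constrains solely the subdiffusion tail, and the analysis of the exponent $\alpha$ becomes the one from Theorem~\ref{prop:reg_aposteriori} applied to $\sum_{k>K}$.

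For the low-frequency block I would invoke the classical convergence theory of spectral cut-off (truncated singular value) regularization under the discrepancy principle, \cite{EnglHankeNeubauer:1996}: the rule \eqref{eqn:dpK} is a convergent, order-optimal regularization method, so $K=K(\delta)\to\infty$ while the propagated data error stays controlled. Concretely, since $c_k=e^{-\lambda_kT}a_k$ gives $a_k=A_{par}(k)c_k$, the low-frequency error equals $\sum_{k\le K}A_{par}(k)(c_k^\delta-c_k)\phi_k$, whence $\|u_{0,lf}^\delta(\cdot;K)-P_Ku_0\|\le e^{\lambda_KT}\delta$ and $\|u_{0,lf}^\delta(\cdot;K)\|\le\|u_0\|+e^{\lambda_KT}\delta$ remains uniformly bounded as $\delta\to0$.

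For the high-frequency block I would transcribe the estimates of Theorem~\ref{prop:reg_aposteriori} to the tail. Decomposing
\[
u_{0,hf}^\delta(\cdot;\alpha,K)-(I-P_K)u_0
=\sum_{k>K}\Bigl(A_{frac}(k,\alpha)(\tilde{c}_k^\delta-c_k)+w_k(\alpha)\,a_k\Bigr)\phi_k
\]
with $w_k(\alpha)$ as in \eqref{eqn:wk}, I would bound the data-error term by $\bar{C}\tfrac{1}{1-\alpha}\tilde{\delta}$ via the stability estimate \eqref{eqn:stability_est} together with the smoothing bound \eqref{eqn:delta_smoothed}, and control the bias term through \eqref{eqn:boundH2} and the hypothesis $u_0\in\dH{2(1+2/p)}$, exactly along the lines of \eqref{eqn:wkalphack}--\eqref{eqn:deltaest}. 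The lower discrepancy bound in \eqref{eqn:dpalpha_split} then forces $\tfrac{\tilde{\delta}}{1-\alpha}=O(1)$, which delivers uniform boundedness of $u_{0,hf}^\delta(\cdot;\alpha,K)$, and hence of the whole reconstruction $u_0^\delta(\cdot;\alpha,K)$, in $L^2(\Omega)$.

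It then remains to pass to the limit. From \eqref{eqn:dpalpha_split}, \eqref{eqn:delta} and $\tilde{\delta}\to0$ one gets $\exp(\mathbb{L}T)u_0^\delta(\cdot;\alpha,K)\to\uT$ strongly in $L^2(\Omega)$. Given uniform boundedness, every subsequence has a weakly convergent subsequence $u_0^{\delta_j}\rightharpoonup\bar{u}$; since $\exp(\mathbb{L}T)$ is compact it carries this to strong convergence $\exp(\mathbb{L}T)u_0^{\delta_j}\to\exp(\mathbb{L}T)\bar{u}$, so $\exp(\mathbb{L}T)\bar{u}=\uT=\exp(\mathbb{L}T)u_0$, and injectivity of $\exp(\mathbb{L}T)$ forces $\bar{u}=u_0$; a subsequence--subsequence argument upgrades this to weak convergence of the full family. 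The step I expect to require genuine care, rather than a direct citation, is the uniform-boundedness estimate for the high-frequency block while $K=K(\delta)$ grows: one must check that restricting the sum to $k>K$ does not weaken the analogue of \eqref{eqn:deltaest}, i.e. that the lower discrepancy bound still pins down $\tfrac{\tilde{\delta}}{1-\alpha}$. Since the retained modes satisfy $\lambda_k\ge\lambda_{K+1}$---the large-$\lambda$ regime where subdiffusion inversion is most favourable---this should go through, but it is where the coupling of the two discrepancy principles must be handled explicitly.
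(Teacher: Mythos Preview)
Your proposal is correct and follows essentially the same architecture as the paper's proof: split the error into low- and high-frequency blocks, control the low-frequency piece via the discrepancy rule \eqref{eqn:dpK} (the paper does this by the explicit Fourier bound $e^{\lambda_KT}\delta\le\frac{1}{\tau-1}\bigl(\sum_{k\ge K}a_k^2\bigr)^{1/2}$ rather than citing TSVD theory, but the content is the same), transplant the estimates of Theorem~\ref{prop:reg_aposteriori} to the tail $k>K$ to get uniform $L^2$-boundedness, and finish with the subsequence--subsequence argument. The coupling issue you flag is precisely what the paper handles: the low-frequency contribution to the $\alpha$-residual is $O(\delta)\le\tilde C_1\tilde\delta$, so the lower bound in \eqref{eqn:dpalpha_split} still controls $\sum_{k>K}(w_k(\alpha)e^{-\lambda_kT}a_k)^2$ from below, exactly the analogue of \eqref{eqn:deltaest} you need.
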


\begin{proof}
The discrepancy principle \eqref{eqn:dpK} for $K$ in terms of Fourier coefficients reads as 
\[
\Bigl(\sum_{k=K+1}^\infty(c_k^\delta)^2\Bigr)^{1/2}\leq \tau\delta\leq
\Bigl(\sum_{k=K}^\infty(c_k^\delta)^2\Bigr)^{1/2}
\]
which due to the fact that $c_k^\delta=c_k^\delta-c_k+e^{-\lambda_kT}a_k$ and the triangle inequality, as well as \eqref{eqn:delta} implies
\begin{equation}\label{eqn:dpKest}
\Bigl(\sum_{k=K}^\infty\Bigl(e^{-\lambda_kT}a_k\Bigr)^2\Bigr)^{1/2}\geq (\tau-1)\delta
\ \mbox{ and }\
\Bigl(\sum_{k=K+1}^\infty\Bigl(e^{-\lambda_kT}a_k\Bigr)^2\Bigr)^{1/2}\leq (\tau+1)\delta
\end{equation}
From the discrepancy principle \eqref{eqn:dpalpha_split} for $\alpha$ we
conclude
\[
\underline{\tau}\tilde{\delta}\leq 
\Bigl(\sum_{k=1}^K(c_k^\delta-\tilde{c}_k^\delta)^2+
\sum_{k=K+1}^\infty\Bigl(w_k(\alpha)\tilde{c}_k^\delta\Bigr)^2\Bigr)^{1/2}\leq
\overline{\tau}\tilde{\delta}\,,
\]
where again we can use $\tilde{c}_k^\delta=\tilde{c}_k^\delta-c_k+e^{-\lambda_kT}a_k$ and the triangle inequality, as well as \eqref{eqn:delta}, \eqref{eqn:delta_smoothed} and \eqref{eqn:boundH2} 
\Margin{Report C (34)}
(cf. \eqref{eqn:wkalphack}) to conclude 
\[
(\underline{\tau}-\tilde{C})\tilde{\delta}-(1+C_1)\delta\leq
\sum_{k=K+1}^\infty\Bigl(w_k(\alpha)e^{-\lambda_kT}a_k\Bigr)^2\Bigr)^{1/2}\leq
(\underline{\tau}+\tilde{C})\tilde{\delta}+(1+C_1)\delta\,,
\]
where  $ (1+C_1)\delta\leq\tilde{C}_1\tilde{\delta}\,$.

For the error in the initial data this yields
\begin{equation*}
\begin{aligned}
&\|u_0^\delta(\cdot;\alpha,K)-u_0\|_{L^2(\Omega)}\\
&=\Bigl(
\sum_{k=1}^K\Bigl(e^{\lambda_kT}c_k^\delta-a_k\Bigr)^2
+\sum_{k=K+1}^\infty 
\Bigl(\frac{1}{E_{\alpha,1}(-\lambda_kT^\alpha)}\tilde{c}_k^\delta - a_k\Bigr)^2
\Bigr)^{1/2}\\
&=\Bigl(
\sum_{k=1}^K\Bigl(e^{\lambda_kT}(c_k^\delta-c_k)\Bigr)^2
+\sum_{k=K+1}^\infty 
\Bigl(
\frac{1}{\lambda_k E_{\alpha,1}(-\lambda_kT^\alpha)}\lambda_k(\tilde{c}_k^\delta-c_k)
+w_k(\alpha)a_k\Bigr)^2
\Bigr)^{1/2}\\
&\leq
e^{\lambda_KT}\delta
+\frac{\bar{C}}{1-\alpha}\tilde{\delta}
+\left(\sum_{k=K+1}^\infty \Bigl(w_k(\alpha) a_k\Bigr)^2\right)^{1/2}\\
&\leq 
\frac{1}{\tau-1} \Bigl(\sum_{k=K}^\infty\Bigl(e^{(\lambda_K-\lambda_k)T}a_k\Bigr)^2\Bigr)^{1/2}
+
\frac{\bar{C}}{\underline{\tau}-\tilde{C}-\tilde{C_1}} 
\Bigl(\sum_{k=K+1}^\infty 
\Bigl(\frac{w_k (\alpha) e^{-\lambda_kT}}{1-\alpha} a_k\Bigr)^2
\Bigr)^{1/2}\\
&\qquad+\left(\sum_{k=K+1}^\infty \Bigl(w_k(\alpha) a_k\Bigl)^2\right)^{1/2}\,.
\end{aligned}
\end{equation*}
Since the right hand side estimate in \eqref{eqn:dpKest} implies the convergence
\[
\sum_{k=K}^\infty\Bigl(e^{(\lambda_K-\lambda_k)T}a_k\Bigr)^2\leq
\sum_{k=K}^\infty(a_k)^2 \to 0 \mbox{ as }\delta\to0
\]
\Margin{Report C (35)}
of the first term, the rest of the proof goes analogously to the one of Theorem \ref{prop:reg_aposteriori}.
\end{proof}

\section*{Appendix}\label{sec:appendix}

\begin{proof}[\unskip\nopunct]
{\it Proof of Lemma}~\ref{lem:smoothing}.
The iteration error can be written as
\[
\begin{aligned}
w^{(i)}-\uT&=-(I-A)^i \uT + \sum_{j=0}^{i-1}(I-A)^jA(\uTdel-\uT)\\
&=-(I-A)^i\exp(-T
\revision{
\mu^{1/2s}A^{-1/2s}
}
) u_0 + (I-(I-A)^i)(\uTdel-\uT)\,.
\end{aligned}
\]
\Margin{Report C (36)}
To estimate the 
\revision{$\dH{2s}$}
\Margin{Report C (15)}
 error $\|\mathbb{L}^s(\uT-\uTtd)\|_{L^2(\Omega)}$, we need to consider
\[\begin{aligned}
&\|A^{-1/2}(w^{(i)}-\uT)\|_{L^2(\Omega)}\\
&\leq
\|(I-A)^i A^{-1/2}\exp(-T
\revision{
\mu^{1/2s}
}
A^{-1/(2s)})\|_{L^2(\Omega)\to L^2(\Omega)} \|u_0\|_{L^2(\Omega)} \\
&\qquad+ \|A^{-1/2}(I-(I-A)^i)\|_{L^2(\Omega)\to L^2(\Omega)}\,\delta\,,
\end{aligned}\]
where the terms on the right hand side (approximation error and propagated noise) can be estimated using spectral theory:
\Margin{Report C (37)}
\[
\begin{aligned}
&\|(I-A)^i A^{-1/2}\exp(-T
\revision{
\mu^{1/2s}
}
A^{-1/(2s)})\|_{L^2(\Omega)\to L^2(\Omega)}\leq
\sup_{\sigma\in(0,1]} \psi(\sigma)\,, \\
&\|A^{-1/2}(I-(I-A)^i
\revision{)}
\|_{L^2(\Omega)\to L^2(\Omega)}\leq
\sup_{\sigma\in(0,1]} \chi(\sigma)
\end{aligned}
\]
(note that $\sigma$ corresponds to $(\sqrt{\mu}\lambda_k^{-1})^s$) for 
\[
\psi(\sigma)=(1-\sigma^2)^i\sigma^{-1} \exp(-\sigma^{-1/s}T) \,, \qquad \chi(\sigma)=\sigma^{-1}(1-(1-\sigma^2)^i)\,,
\]
where the functions $\psi$ and $\chi$ can be bounded as follows.

\paragraph{Bounding $\psi$:} 
Since for $\sigma\in(0,1]$ and $s\geq1$, we have $\sigma^{1/s}\geq \sigma$ and therefore 
$(1-\sigma^2)^i\sigma^{-1}
\revision{\geq} 
(1-\sigma^{2/s})^i\sigma^{-1/s}$, 
\Margin{Report C (38)}
with the transformation of variables $\sigma^{1/s}\to\sigma$ it
suffices to consider the special case $s=1$. 
\revision{
Moreover, 
}
\Margin{Report C (39)}
since $\lim_{\sigma\to0} \psi(\sigma)=0$ and $\psi(1)=0$, it
is enough to consider critical points:
\[
\begin{aligned}
&\psi'(\sigma)
%=-i(1-\sigma^2)^{i-1}2\sigma\sigma^{-1} \exp(-\sigma^{-1}T)-(1-\sigma^2)^i\sigma^{-2} \exp(-\sigma^{-1}T)+(1-\sigma^2)^i\sigma^{-1} \exp(-\sigma^{-1}T)\sigma^{-2}T\\
= -(1-\sigma^2)^{i-1}\sigma^{-3}\exp(-\sigma^{-1}T)\bigl((2i-1)\sigma^3+T\sigma^2+\sigma-T\bigr)=0
\end{aligned}
\]
Cardano's formula with $a=2i-1$, $b=T$, $c=1$, $d=-T$ (keeping in mind that $a\geq1$, $T\leq1$)
\[
\begin{aligned}
&Q = \dfrac {3 a c - b^2} {9 a^2}= \dfrac {3 a - T^2} {9 a^2}\geq \frac{2}{9a}>0\,, \\
&R = \dfrac {9 a b c - 27 a^2 d - 2 b^3} {54 a^3} = \dfrac {9 a T + 27 a^2 T - 2 T^3} {54 a^3}\geq\frac{T}{2a}>0\,,\\
&Q^3 + R^2=\frac{d^2}{4a^2}+\frac{2c^3-9bcd}{54a^3}-\frac{b^2c^2-9bcd}{108a^4}
\leq \frac{d^2}{4a^2}+\frac{2c^3-9bcd}{54a^3}\\
&\qquad\qquad=\frac{4+18T^2+27T^2a}{108a^3}\leq\frac{22+27T^2a}{108a^3}\\
&\frac{2^{1/3}R}{\sqrt[3]{Q^3+R^2}}\geq \frac{3T}{\sqrt[3]{22+27T^2a}}
\end{aligned}
\]
yields the unique (since $D = Q^3 + R^2 >0$) real root
\[
\begin{aligned}
\sigma_*
&= \sqrt [3] {\sqrt{Q^3 + R^2}+R} - \sqrt [3] {\sqrt{Q^3 + R^2}-R} - \dfrac b {3 a} \\
&= \frac{2R}{\left(\sqrt{Q^3 + R^2}+R\right)^{2/3}+\left(\sqrt{Q^3 + R^2}-R\right)^{2/3} +Q}
 - \dfrac b {3 a} \\
&\geq \frac{2R}{\left(2\sqrt{Q^3 + R^2}+R\right)^{2/3} +Q}
 - \dfrac b {3 a} 
\geq \frac{2R}{(1+2^{2/3})(Q+R^{2/3})}
 - \dfrac b {3 a} \\
& 
\geq \frac{2^{1/3}}{1+2^{2/3}}\frac{R}{\sqrt[3]{Q^3+R^2}}
 - \dfrac b {3 a} \\
&\geq 
\frac{T}{\sqrt[3]{22+27T^2a}}
 - \dfrac T {3 a} \\
&\geq T\cdot
\begin{cases}
\frac{1}{\sqrt[3]{23}}-\frac13=:c_1
&\mbox{ if } 27T^2a\leq 1\\
\frac{1}{\sqrt[3]{23\cdot 27T^2a}}-\frac1{3a}\geq \frac{c_2}{\sqrt[3]{T^2a}}&
\mbox{ if } 27T^2a> 1
\end{cases}\\
&\geq c^{-1}  T \min\{1, 1/\sqrt[3]{T^2a}\}
=:\underline{\sigma}_*\,, 
\end{aligned}
\]
where the identity on the second line follows by multiplication with the denominator of the 2nd line using the identities $(x-y)(x^2+xy+y^2)=x^3-y^3$ and $(x-y)(x+y)=x^2-y^2$.
The estimates on the third and fourth line use the fact that for $x,y>0$ and $q\in(0,1], r\in[1,\infty)$, we have $(x+y)^q\leq x^q+y^q$ and  
\revision{
$x+y\leq 2^{1-1/r}(x^r+y^r)^{1/r}$,
more precisely, with $q=\frac23$ in the 3rd line and with $r=3$ in the 4th line.
}
\Margin{Report C (40)}

Since 
\[
\psi(\sigma)\leq\tilde{\psi}(\sigma):=\sigma^{-1} \exp(-\sigma^{-1}T)
\]
and $\tilde{\psi}'(\sigma)=\exp(-\sigma^{-1}T)(-\sigma^{-2}+\sigma^{-2}T)\leq 0$ for $T\leq1$ we have 
\[
\sup_{\sigma\in(0,1]}\psi(\sigma)=\psi(\sigma_*)\leq\tilde{\psi}(\sigma_*)\leq\tilde{\psi}(\underline{\sigma}_*)
\]

\paragraph{Bounding $\chi$:}
\[
\begin{aligned}
\chi(\sigma)%=\sigma^{-1}(1-(1-\sigma^2)^i)
&=\sqrt{(1-(1-\sigma^2)^i)}\sqrt{\sigma^{-2}(1-(1-\sigma^2)^i)}\\
&=\sqrt{(1-(1-\sigma^2)^i)}\Bigl({\sum_{j=0}^{i-1}(1-\sigma^2)^j}\Bigr)^{1/2}
\leq\sqrt{i}
\end{aligned}
\]
\noindent
These bounds on $\psi$ and $\chi$ yield
\begin{equation}\label{eqn:estLW}
\begin{aligned}
\mu^{-1/2}&\|\mathbb{L}^s(w^{(i)}-\uT)\|_{L^2(\Omega)}\\
&\leq
(c /T) \, \max\{1, z\} \exp\bigl(-c \max\{1, z\}\bigr)\|u_0\|_{L^2(\Omega)}+\sqrt{i}\delta\\
&\mbox{ for }z=\sqrt[3]{iT^2}
\end{aligned}
\end{equation}
for some $c$ independent of $i$ and $T$.
For driving the first term on the right hand side to zero as $\delta\to0$, we need to choose $i_*=i_*(\delta)\to\infty$, thus we will have $\max\{1, \sqrt[3]{i_*(\delta)T^2}\}=\sqrt[3]{i_*(\delta)T^2}$ for $\delta$ sufficiently small. Taking this into account while balancing the two terms on the right hand side of \eqref{eqn:estLW} yields
\[
\frac{\delta}{\|u_0\|_{L^2(\Omega)}}= c z^{-1/2} \exp(-cz)\mbox{ i.e., }
\log\left(\frac{\|u_0\|_{L^2(\Omega)}}{\delta}\right)=cz+\frac12 \log z-\log c\,.
\]
Thus an optimal choice is given by 
\[
i_*\sim T^{-2} \log\left(\frac{\|u_0\|_{L^2(\Omega)}}{\delta}\right)
\]
and yields \eqref{eqn:delta_smoothed}.
\end{proof}

\section*{Acknowledgment}

\noindent
The work of the first author was supported by the Austrian Science Fund FWF
under the grants I2271 and P30054 as well as partially by the
Karl Popper Kolleg ``Modeling-Simulation-Optimization'',
funded by the Alpen-Adria-Universit\" at Klagenfurt and by the
Carin\-thian Economic Promotion Fund (KWF).

\noindent
The work of the second author was supported 
in part by the
National Science Foundation through award DMS-1620138.

\medskip

\revision{
The authors wish to thank the reviewers for their careful reading of the manuscript and their detailed reports with valuable comments and suggestions that have led to an improved version of the paper.
}

%%%%%%%%%%%%%%%%%%%%%%%%%%%%%%%%%%%%%%%%%%%%
%\vfill\eject
\bigskip

%\bibliographystyle{elsarticle-num} 
%\bibliographystyle{plain} 
%\bibliography{fbh_paper}

\begin{thebibliography}{10}

\bibitem{Akceliketal2005}
Volkan Akcelik, George Biros, Andrei Draganescu, Omar Ghattas, Judith Hill, and
  Bart~G. van Bloemen~Waanders.
\newblock Inversion of airborne contaminants in a regional model.
\newblock In {\em Computational Science - {ICCS} 2006, 6th International
  Conference, Reading, UK, May 28-31, 2006, Proceedings, Part {III}}, pages
  481--488, 2006.

\bibitem{AlabauCannarsa2009}
Fatiha Alabau-Boussouira and Piermarco Cannarsa.
\newblock A general method for proving sharp energy decay rates for
  memory-dissipative evolution equations.
\newblock {\em Comptes Rendus Mathematique}, 347(15):867 -- 872, 2009.

\bibitem{AmesClark_etal:1998}
Karen~A. Ames, Gordon~W. Clark, James~F. Epperson, and Seth~F. Oppenheimer.
\newblock A comparison of regularizations for an ill-posed problem.
\newblock {\em Math. Comp.}, 67(224):1451--1471, 1998.

\bibitem{Barenblatt:1960}
Grigory~I. Barenblatt, P.~Zheltov, , and I.~N. Kochina.
\newblock Basic concepts in the theory of seepage of homogeneous liquids in
  fissured rocks (strata).
\newblock {\em PMM24, Transl. of Priklad. Mat. Mekh.24}, 24:1286--1303, 1960.

\bibitem{BonitoLeiPasciak:2017}
Andrea Bonito, Wenyu Lei, and Joseph~E. Pasciak.
\newblock Numerical approximation of space-time fractional parabolic equations.
\newblock {\em Comput. Methods Appl. Math.}, 17(4):679--705, 2017.

\bibitem{Caputo:1967}
Michele Caputo.
\newblock Linear models of dissipation whose {$Q$} is almost frequency
  independent -- {II}.
\newblock {\em Geophys. J. Int.}, 13(5):529--539, 1967.

\bibitem{CaputoMainardi:1971a}
Michele Caputo and Francesco Mainardi.
\newblock A new dissipation model based on memory mechanism.
\newblock {\em Pure Appl. Geophys.}, 91(1):134--147, 1971.

\bibitem{ChenGurtin:1968}
Peter~J. Chen and Morton~E. Gurtin.
\newblock On a theory of heat conduction involving two temperatures.
\newblock {\em Zeitschrift f{\"u}r angewandte Mathematik und Physik ZAMP},
  19(4):614--627, July 1968.

\bibitem{ChenTriggiani1989}
Shu~Ping Chen and Roberto Triggiani.
\newblock Proof of extensions of two conjectures on structural damping for
  elastic systems.
\newblock {\em Pacific J. Math.}, 136(1):15--55, 1989.

\bibitem{ClarkOppenheimer:1994}
Gordon~W. Clark and Seth~F. Oppenheimer.
\newblock Quasireversibility methods for non-well-posed problems.
\newblock {\em Electron. J. Differential Equations}, pages No.\ 08, approx.\ 9,
  1994.

\bibitem{ColemanDuffinMizel:1965}
Bernard~D. Coleman, Richard~J. Duffin, and Victor~J. Mizel.
\newblock Instability, uniqueness and nonexistence theorems for the equation
  {$u_{t}=u_{xx}-u_{xtx}$} on a strip.
\newblock {\em Arch. Rational Mech. Anal.}, 19:100--116, 1965.

\bibitem{Djrbashian:1993}
Mkhitar~M. Djrbashian.
\newblock {\em Harmonic {A}nalysis and {B}oundary {V}alue {P}roblems in the
  {C}omplex {D}omain}.
\newblock Birkh\"auser, Basel, 1993.

\bibitem{EnglHankeNeubauer:1996}
Heinz~W. Engl, Martin Hanke, and Andreas Neubauer.
\newblock {\em Regularization of {I}nverse {P}roblems}.
\newblock Kluwer, Dordrecht, 1996.

\revision{
\bibitem{MR3244285}
Rudolf Gorenflo, Anatoly~A. Kilbas, Francesco Mainardi, and Sergei~V. Rogosin.
\newblock {\em Mittag-{L}effler functions, related topics and applications}.
\newblock Springer Monographs in Mathematics. Springer, Heidelberg, 2014.
}

\bibitem{GorenfloMainardi:1997}
Rudolf Gorenflo and Francesco Mainardi.
\newblock Fractional calculus.
\newblock In {\em Fractals and Fractional Calculus in Continuum Mechanics},
  pages 223--276. Springer Vienna, 1997.

\bibitem{Hansen1997}
Per~Christian Hansen.
\newblock {\em Rank-deficient and Discrete Ill-posed Problems: Numerical
  Aspects of Linear Inversion}.
\newblock SIAM monographs on mathematical modeling and computation. SIAM, 1997.

\bibitem{Hohage97}
Thorsten Hohage.
\newblock Logarithmic convergence rates of the iteratively regularized
  {G}au\ss-{N}ewton method for an inverse potential and an inverse scattering
  problem.
\newblock {\em Inverse Problems}, 13:1279--1299, 1997.

\bibitem{Isakov}
Victor Isakov.
\newblock {\em Inverse Problems for Partial Differential Equations}.
\newblock Applied Mathematical Sciences. Springer New York, 2006.

\bibitem{JinLazarovLiuZhou:2015}
Bangti Jin, Raytcho Lazarov, Yikan Liu, and Zhi Zhou.
\newblock The {G}alerkin finite element method for a multi-term time-fractional
  diffusion equation.
\newblock {\em J. Comput. Phys.}, 281:825--843, 2015.

\bibitem{JinRundell:2015}
Bangti Jin and William Rundell.
\newblock A tutorial on inverse problems for anomalous diffusion processes.
\newblock {\em Inverse Problems}, 31(3):035003, 40, 2015.

\bibitem{Kato:1960}
Tosio Kato.
\newblock Frational powers of dissipative operators. {II}.
\newblock {\em J. Math. Soc. Japan}, 14:242--248, 1962.

\bibitem{KowSch12}
Richard Kowar and Otmar Scherzer.
\newblock Attenuation models in photoacoustics.
\newblock In H.~Ammari, editor, {\em Mathematical Modeling in Biomedical
  Imaging II: Optical, Ultrasound, and Opto-Acoustic Tomographies}, volume 2035
  of {\em Lecture Notes in Mathematics}, pages 85--130. Springer Verlag, Berlin
  Heidelberg, 2012.

\bibitem{Lasieckaetal2013}
Irena Lasiecka, Salim~A. Messaoudi, and Muhammad~I. Mustafa.
\newblock Note on intrinsic decay rates for abstract wave equations with
  memory.
\newblock {\em Journal of Mathematical Physics}, 54(3):031504, 2013.

\bibitem{LattesLions:1969}
Robert Latt\`es and Jacques-Louis Lions.
\newblock {\em The method of quasi-reversibility. {A}pplications to partial
  differential equations}.
\newblock Translated from the French edition and edited by Richard Bellman.
  Modern Analytic and Computational Methods in Science and Mathematics, No. 18.
  American Elsevier Publishing Co., Inc., New York, 1969.

\bibitem{LiLiuYamamoto:2015}
Zhiyuan Li, Yikan Liu, and Masahiro Yamamoto.
\newblock Initial-boundary value problems for multi-term time-fractional
  diffusion equations with positive constant coefficients.
\newblock {\em Applied Mathematics and Computation}, 257:381 -- 397, 2015.
\newblock Recent Advances in Fractional Differential Equations.

\bibitem{LuPereverzyev2013}
Shuai Lu and Sergei~V. Pereverzev.
\newblock {\em Regularization Theory for Ill-posed Problems: Selected Topics}.
\newblock Inverse and ill-posed problems series. Walter de Gruyter GmbH \&
  Company KG, 2013.

\bibitem{LuchkoGorenflo:1999}
Yurii Luchko and Rudolf Gorenflo.
\newblock An operational method for solving fractional differential equations
  with the {C}aputo derivatives.
\newblock {\em Acta Math. Vietnam.}, 24(2):207--233, 1999.

\bibitem{Mainardi:1996}
Francesco Mainardi.
\newblock The fundamental solutions for the fractional diffusion-wave equation.
\newblock {\em Appl. Math. Lett.}, 9(6):23--28, 1996.

\bibitem{MontrollWeiss:1965}
Elliott~W Montroll and George~H Weiss.
\newblock Random walks on lattices. {II}.
\newblock {\em J. Math. Phys.}, 6(2):167--181, 1965.

\bibitem{Morozov67}
V.A. Morozov.
\newblock Choice of parameter for the solution of functional equations by the
  regularization method.
\newblock {\em Dokl. Akad. Nauk SSSR}, 175:1225--8, 1967.

\bibitem{SakamotoYamamoto:2011}
Kenichi Sakamoto and Masahiro Yamamoto.
\newblock Initial value/boundary value problems for fractional diffusion-wave
  equations and applications to some inverse problems.
\newblock {\em J. Math. Anal. Appl.}, 382(1):426--447, 2011.

\bibitem{Showalter:1974a}
Ralph~E. Showalter.
\newblock The final value problem for evolution equations.
\newblock {\em J. Math. Anal. Appl.}, 47:563--572, 1974.

\bibitem{Showalter:1974b}
Ralph~E. Showalter.
\newblock Quasi-reversibility of first and second order parabolic evolution
  equations.
\newblock In {\em Improperly posed boundary value problems ({C}onf., {U}niv.
  {N}ew {M}exico, {A}lbuquerque, {N}.{M}., 1974)}, pages 76--84. Res. Notes in
  Math., No. 1. Pitman, London, 1975.

\bibitem{Showalter:1976}
Ralph~E. Showalter.
\newblock Regularization and approximation of second order evolution equations.
\newblock {\em SIAM J. Math. Anal.}, 7(4):461--472, 1976.

\bibitem{SokolovKlafterBlumen:2002}
Igor~M Sokolov, Joseph Klafter, and Alexander Blumen.
\newblock Fractional kinetics.
\newblock {\em Physics Today}, 55(11):48--54, 2002.

\bibitem{Wright:1933}
Edward~M. Wright.
\newblock On the coefficients of power series having exponential singularities.
\newblock {\em J. London Math. Soc}, 8:71--79, 1933.

\end{thebibliography}

\end{document}